\documentclass[maxcitenames=2]{fullpaper}

\pdfoutput=1

\title{Revisiting the directed pathwidth of \titlemath{h}-semicomplete digraphs}
\shorttitle{Revisiting the directed pathwidth of \titlemath{h}-semicomplete digraphs}

\author{
\authorname{Guilherme C. M. Gomes}
\affiliation{LIRMM, Université de Montpellier, CNRS, Montpellier, France}
\affiliation{Universidade Federal de Minas Gerais, Belo Horizonte, Brazil}
\authormail{gcm.gomes@dcc.ufmg.br}
\authorname{Raul Lopes}
\affiliation{Hamburg University of Technology, Institute for Algorithms and Complexity, Hamburg, Germany}
\authormail{raul.lopes@tuhh.de}
\authorname{Matthias Mnich}
\affiliation{Hamburg University of Technology, Institute for Algorithms and Complexity, Hamburg, Germany}
\authormail{matthias.mnich@tuhh.de}
}
\shortauthors{Gomes, Lopes, and Mnich}
\date{}

\usepackage{xspace}
\usepackage[disableredefinitions]{complexity}

\xspaceaddexceptions{[}

\usepackage{outlines}

\newlist{romanenumerate}{enumerate}{1}
\setlist[romanenumerate]{label=(\roman*), ref=(\roman*)}

\DeclareDelimFormat{namelabeldelim}{\addnbspace}
\zcLanguageSetup{english}{
  type = section,   Name-pl = sections,
  type = page,      Name-pl = pages,
  type = figure,    Name-pl = figures,
  type = table,     Name-pl = tables,
  type = item,      Name-pl = items,
  type = equation,  Name-pl = equations,
}
\NewDocumentCommand{\fref}{O{f} m}{\hyperref[#2]{\ensuremath{#1_{\ref*{#2}}}}}
\newcommand{\caseref}[2]{%
  \hyperref[#2]{\zcref*[noref]{#1}~\zcref*[noname]{#1}~\ref*{#2}}%
}
\providecommand{\titlemath}[1]{\texorpdfstring{$#1$}{#1}\xspace}

\newkeytheorem{corollary}[style=paperplain, sibling=theorem,
  refname={corollary,corollaries}, Refname={Corollary,corollaries}]
\newkeytheorem{observation}[style=paperplain, sibling=theorem,
  refname={observation,observations}, Refname={Observation,observations}]
\newkeytheorem{claim}[style=paperdefinition, sibling=theorem,
  refname={claim,claims}, Refname={Claim,claims}]

\newenvironment{cproof}
  {\begin{proof}[Proof of the claim]}
  {\end{proof}}

\usepackage{tikz}
\usetikzlibrary{positioning, arrows.meta, decorations.pathmorphing, calc, fit, backgrounds}

\tikzset{faded/.style={gray,very thin}}
\tikzset{vertex/.style={draw,circle,minimum size=5pt,inner sep=0pt}}
\tikzset{novertex/.style={circle,minimum size=5pt,inner sep=0pt}}
\tikzset{blackvertex/.style={draw,circle,minimum size=5pt,inner sep=0pt, fill=black}}
\tikzset{redvertex/.style={draw,circle,minimum size=5pt,inner sep=0pt, fill=red}}
\tikzset{bluevertex/.style={draw,circle,minimum size=5pt,inner sep=0pt, fill=blue}}
\tikzset{greenvertex/.style={draw,circle,minimum size=5pt,inner sep=0pt, fill=green}}
\tikzset{arrow/.style={-{Latex[scale=1]},shorten >= 0pt}}
\tikzset{snake it/.style={decorate, decoration=snake}}

\newcommand{\mathtype}[1]{\mathrm{#1}}

\DeclareMathOperator{\dpw}{\mathtype{dpw}}
\DeclareMathOperator{\dcw}{\mathtype{dcw}}

\DeclareMathOperator{\tower}{\mathtype{tower}}

\newcommand{\Ocal}{\ensuremath{\mathcal{O}}}
\newcommand{\Ramsey}{\ensuremath{\mathtype{R}}}
\newcommand{\ADP}{\textsc{ADP}\xspace}
\newcommand{\DDP}{\textsc{DDP}\xspace}
\newcommand{\detours}[3]{\ensuremath{P_{#1}(#2 \to #3)}}
\DeclareMathOperator{\join}{\bigoplus}

\usepackage{environ}
\usepackage{booktabs}
\usepackage{tabularx}
\newcounter{boxproblem}
\newlength{\problemboxsep}
\NewEnviron{boxproblem}[2][]{%
  \par\addvspace{\medskipamount}%
  \noindent
  \begingroup
    \refstepcounter{boxproblem}%
    \setlength{\extrarowheight}{2pt}%
    \begin{tabularx}{\textwidth}{@{}>{\bfseries}l@{\hspace{1.2em}}>{\raggedright\arraybackslash}X@{}}
      \toprule
      \multicolumn{2}{@{}l@{}}{%
        \scshape #2}\\
      \addlinespace[\problemboxsep]
      \midrule
      \addlinespace[\problemboxsep]
      \BODY\\
      \addlinespace[\problemboxsep]
      \bottomrule
    \end{tabularx}%
  \endgroup
  \par\addvspace{\medskipamount}%
}
\zcRefTypeSetup{boxproblem}{
  Name-sg = Problem, name-sg = problem,
  Name-pl = problems, name-pl = problems,
}

\newcommand{\hsemi}[1]{$#1$\nobreakdash-\nobreak\hspace{0pt}semicomplete\xspace}

\begin{document}
\pagenumbering{roman}
\maketitle

\begin{abstract}
  A digraph is \emph{\hsemi{h}} if every vertex has at most~$h$ non-neighbors.
  In semicomplete digraphs, \emph{$k$-triples} are obstructions to bounded directed pathwidth with many algorithmic applications: Fradkin and Seymour~[JCTB~2013] showed that a large triple contains a subdivision of every small digraph, and triples were used for irrelevant vertex arguments in routing problems by Fomin and Pilipczuk~[JCTB~2019] and Gomes, Lopes, and Sau~[ICALP~2025].
  Fradkin and Seymour also showed that there is a function~$f(k)$ such that every semicomplete digraph of directed pathwidth at least $f(k)$ contains a $k$-triple, and their proof gives $f(k)$ triply exponential in~$k$.
  For \hsemi{h} digraphs, a result of Kitsunai et~al.~[ESA~2015] also yields a $k$-triple, but only existentially, and finding one in fixed-parameter time remained open.

  We improve the bound on $f(k)$ to a single exponential and extend the result to \hsemi{h} digraphs: we show that every \hsemi{h} digraph of directed pathwidth greater than $(h+1) \cdot k \cdot 2^{27k+9}$ contains a $k$-triple, which we find in polynomial time, and no bound subexponential in~$k$ or sublinear in~$h$ is possible.
  We also extend the approximation algorithm for directed pathwidth of Fomin and Pilipczuk to \hsemi{h} digraphs, so its certificates of large directed pathwidth remain valid.
  Our proof is much shorter than the analysis of Kitsunai et~al., and while their algorithm is exact, ours runs in polynomial time.
  As an application, we show that \textsc{Arc-Disjoint Paths} on \hsemi{h} digraphs is fixed-parameter tractable parameterized by the number of paths and~$h$, generalizing the semicomplete case.
  Finally, unlike in semicomplete digraphs, directed pathwidth does not bound directed cliquewidth: for every~$k$, some acyclic \hsemi{1} digraph, so of directed pathwidth zero, has directed cliquewidth at least~$k$.
\end{abstract}

\metadatum{Keywords.}{Semicomplete digraphs, parameterized complexity, directed pathwidth, directed cliquewidth, triples}
\metadatum{Funding.}{\emph{Guilherme C. M. Gomes}: Funded by the European Union, project PACKENUM, grant number 101109317. Views and opinions expressed are however those of the author only and do not necessarily reflect those of the European Union. Neither the European Union nor the granting authority can be held responsible for them.\\
\emph{Raul Lopes}: HIDSS-0002 DASHH (Data Science in Hamburg - Helmholtz Graduate School for the Structure of Matter).}
\thispagestyle{empty}
\clearpage
\pagenumbering{arabic}

\section{Introduction}
\label{sec:intro}
The \emph{treewidth} of undirected graphs~\cite{RobertsonS86tw}, which can be understood as an estimation of how tightly a graph can be approximated by a tree, is arguably the most successful example of a graph parameter from the viewpoint of algorithmic applications.
For instance, the celebrated theorem of \textcite{Courcelle90} states that every graph problem expressible in monadic second-order logic is fixed-parameter tractable parameterized by the treewidth of the input graph.
Structurally, the fundamental reasons why the treewidth of some graphs is large are well understood and often used to push applications of treewidth outside the realm of graphs of bounded treewidth.
For example, the canonical obstruction to bounded treewidth is given by the Grid Theorem by \textcite{RobertsonS86}, which states that there is a function~$f$ such that every graph with treewidth at least $f(k)$ contains a $(k \times k)$-grid as a minor.
In later work~\cite{RobertsonS95,RobertsonS12}, they use large grid minors, together with further structure, to find \emph{irrelevant vertices} for the \textsc{Disjoint Paths} problem, and thereby reduce the problem to graphs of bounded treewidth.
We discuss irrelevant vertices later in this section.

For digraphs, \textcite{JohnsonRST01} introduced the parameter known as \emph{directed treewidth}, but it has not seen the same success as undirected treewidth regarding algorithmic applications.
Although a fixed-parameter approximation algorithm for directed treewidth is known~\cite{CamposLMS22}, \textcite{LampisKM11} showed that \textsc{Directed Hamiltonian Cycle} is \W[2]-hard parameterized by directed treewidth and that \textsc{Max Directed Cut} is \NP-hard on directed acyclic graphs, which have directed treewidth zero, whereas their undirected versions, \textsc{Hamiltonian Cycle} and \textsc{Max Cut}, are fixed-parameter tractable parameterized by treewidth.
In fact, it is a topic of discussion whether there are \emph{any} good digraph width measures, as discussed by Ganian et al.~\mbox{\cite{GanianHKMORS16,GanianHKLOR14}}.
A classic example of a problem resistant to many approaches based on width measures of digraphs to design fixed-parameter algorithms is the \textsc{Arc-Disjoint Paths} (\textsc{ADP}) problem.
Given a digraph~$D$ and pairs~$(s_1, t_1), \ldots, (s_p, t_p)$ of vertices of~$D$, \textsc{ADP} asks whether~$D$ has pairwise arc-disjoint paths~$P_1, \ldots, P_p$ such that~$P_i$ connects a terminal~$s_i$ to terminal~$t_i$, for $i = 1,\hdots,p$ (see \zcref{sec:adp-dynamic-programming} for the formal definition).

\textcite{FortuneHW80} showed that \textsc{ADP} is \NP-complete even for two paths, and \textcite{Slivkins10} showed \W[1]-hardness parameterized by the number of paths even when restricted to directed acyclic graphs, which immediately implies that the \W[1]-hardness extends to digraphs of bounded directed treewidth.
Those hardness results raise the question of which parameters and classes of digraphs allow one to derive algorithmically useful properties.
This question has found particularly satisfactory answers in the class of \emph{semicomplete digraphs}.
A digraph~$D$ is \emph{semicomplete} if any pair of vertices of~$D$ is connected by at least one arc.
We are primarily concerned with directed pathwidth, and so refer the reader to \textcite{GurskiKRW21} for a more general discussion on other width parameters on semicomplete digraphs.

Directed pathwidth behaves very differently on semicomplete digraphs compared to general digraphs.
\textcite{FradkinS15} showed that, in semicomplete digraphs, \textsc{ADP} is fixed-parameter tractable parameterized by the number~$p$ of paths that we wish to find.
\textcite{FominP19} improved the dependence of the run time on the number~$n$ of vertices from~$n^5$ to~$n^3$, with an algorithm that also solves \textsc{Rooted Immersion}, a generalization of \textsc{ADP} that asks for an injective map from the vertices of a digraph~$H$ to the vertices of the input digraph, with some images prescribed, together with arc-disjoint paths joining the images of the ends of every arc of~$H$.
They used three ingredients to design a ``win-win'' strategy based on irrelevant vertices.
The first ingredient is a robust structural theory for directed pathwidth on semicomplete digraphs developed by \textcite{FradkinS13}; of particular interest in it is the existence of a simple obstruction, known as a \emph{$k$-triple} (see \zcref{def:k-triple}), which, for some function~$f$, every semicomplete digraph with directed pathwidth at least~$f(k)$ contains.
The second ingredient is an algorithm that, given a $g(p)$-triple that avoids the terminals, finds an \emph{irrelevant vertex} of the input instance in polynomial time, where a vertex is irrelevant if it can be safely removed from the input digraph without changing the answer to the problem.
The final ingredient is a dynamic programming algorithm parameterized by~$p$ and the directed pathwidth~$\dpw(D)$ of the input digraph~$D$.
Every semicomplete digraph of directed pathwidth~$k$ has directed cliquewidth at most~$k+2$~\cite[Lemma 2.14]{FominP19}, and \textsc{ADP} can be expressed in monadic second-order logic without quantification over arc sets~\cite{GanianHO11}.
Together with the meta-theorem of \textcite{CourcelleMR00} for this logic on graphs of bounded cliquewidth, which also holds for digraphs~\cite[Theorem 2.15]{FominP19}, this already implies that \textsc{ADP} is fixed-parameter tractable parameterized by $p + \dpw(D)$.
Hence the interest in the dynamic programming algorithm is mainly its (improved) run time.

Beyond their work on \textsc{ADP}, \textcite{FominP19} also contribute to the structural theory of directed pathwidth on semicomplete digraphs, especially through efficient algorithms to compute important elements of the theory.
In particular, they design a constant-factor polynomial-time approximation algorithm for directed pathwidth in semicomplete digraphs, as well as a fixed-parameter algorithm.
Concretely, when given a semicomplete digraph~$D$, the approximation algorithm obtains a \emph{degree tangle} or a \emph{matching tangle} (see \zcref{def:degree-tangles,def:matching-tangles}), or a \emph{path decomposition} (see \zcref{def:pathwidth}) of bounded width of~$D$.
Both tangles are shown to be obstructions to bounded directed pathwidth and then individually used to build \emph{jungles} (see \zcref{def:jungles}), the original obstruction of this theory~\cite{FradkinS13}.

Given these structural insights~\cite{FradkinS13} and algorithmic successes~\cite{FominP19} for the directed pathwidth of semicomplete digraphs, a natural question is to which larger classes of digraphs those can be extended.
This question motivates our current paper.

Our main contributions answer this question for \emph{\hsemi{h}} digraphs, which are the digraphs whose vertices each have at most~$h$ non-neighbors.
Thus, \emph{\hsemi{h}} digraphs are universal in the sense that any digraph is \emph{\hsemi{h}} for some value of~$h$.

\textcite{KitsunaiKT15} gave a fixed-parameter algorithm to compute the directed pathwidth of \hsemi{h} digraphs, generalizing the fixed-parameter algorithm for semicomplete digraphs~\cite{FominP19}.
They also show that obstructions for bounded directed pathwidth of semicomplete digraphs remain obstructions in \hsemi{h} digraphs.
Namely, they prove that there is a function~$f(k, h)$ such that every \hsemi{h} digraph of directed pathwidth at least $f(k, h)$ contains a semicomplete subgraph of directed pathwidth at least~$k$.
Building a semicomplete digraph from an \hsemi{h} one is easy.
Start with a set~$X$ of candidate vertices, initially containing all vertices of the given digraph~$D$, and an empty digraph~$D'$.
Then iteratively pick a vertex~$v \in X$ to include in~$D'$, and remove from~$X$ both~$v$ and all \emph{non-neighbors} of~$v$.
After selecting~$\ell$ vertices, at most $\ell \cdot (h+1)$ vertices have been removed from~$X$ and the selected vertices induce a semicomplete digraph.
The difficult part is guaranteeing that obstructions to directed pathwidth survive such a selection, as it may very well be the case that the deleted vertices contained most of a $k$-triple, for example.
Therefore, \textcite{KitsunaiKT15} work through a careful and difficult analysis, heavily reliant on probabilistic techniques, to show that some large obstruction survives a sampling similar to the selection procedure we just described.

Their work~\cite{KitsunaiKT15}, however, stops short of producing jungles in fixed-parameter time.
First, their proof produces, in a semicomplete subgraph, one of \emph{three} obstructions: a degree tangle, a matching tangle, or what they call a \emph{spider}.
Although it is reasonable to expect that jungles can be obtained from degree tangles and matching tangles in a way similar to the semicomplete case~\cite{FominP19}, it is not clear whether jungles can be constructed from spiders in polynomial time.
Second, their proof is existential and, as far as we could verify, there is no indication that it can be easily translated into a fixed-parameter algorithm.
A na\"{\i}ve \XP algorithm to find $k$-triples is immediate since a $k$-triple is formed by $3k$ vertices (see \zcref{def:k-triple}).

\subparagraph*{Our contributions.}
Our first result is a bound on the directed pathwidth, single exponential in~$k$, that forces a $k$-triple in an \hsemi{h} digraph, together with a polynomial-time algorithm that finds the triple.

\begin{theorem}[Simplified]
\label{thm:large-dpw-to-triples-simplified}
  Let~$k$ be a positive integer,~$h$ be a non-negative integer, and~$D$ be an \hsemi{h} digraph on~$n$ vertices.
  If $\dpw(D) > (h+1) \cdot k \cdot 2^{27k + 9}$, then~$D$ contains a $k$-triple, which can be found in $\min\{{(h+1)}^3 \cdot 2^{\Ocal(k)} \cdot n^2, \Ocal(n^5)\}$~time.
\end{theorem}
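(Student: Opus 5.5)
We follow the win-win structure of the approximation algorithm of Fomin and Pilipczuk, extended to \hsemi{h} digraphs. Order the vertices of~$D$ by in-degree, $v_1, \ldots, v_n$. The first step is an analogue of the semicomplete fact that $|d^-(u) - d^-(v)|$ is small whenever $u,v$ lie close in this order and the digraph has small directed pathwidth. Concretely, we run a sweep over windows of consecutive vertices in this order, of length about $(h+1)\cdot c\,k$ for a constant~$c$. Either every window yields a bag, and the bags together form a path decomposition of width $(h+1)\cdot k\cdot 2^{\Ocal(k)}$ (contradicting the hypothesis), or some window certifies an obstruction. The bags are taken as a window together with all vertices whose in-degree is within a fixed slack of the window, and we must check the arc condition. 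An arc $u\to v$ going backwards in the decomposition forces either a large in-degree gap or a missing arc, and each vertex has at most~$h$ missing arcs. The sweep's failure gives one of two certificates:
\begin{itemize}
\item a \emph{degree tangle}: many vertices whose in-degrees are pairwise close;
\item a \emph{matching tangle}: a large set $X$ with in-degrees below those of a set $Y$, together with a large matching of arcs from $Y$ to $X$.
\end{itemize}
The factor $(h+1)$ is paid exactly here, since the non-neighbours perturb all degree comparisons by at most~$h$.

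The second step extracts a $k$-triple from a tangle of size $(h+1)\cdot 2^{\Ocal(k)}$. First we pass to a semicomplete subset by the greedy selection described in the introduction, restricted to the tangle: we pick vertices and discard their non-neighbours. This keeps a $1/(h+1)$ fraction, giving a semicomplete tangle-like set of size $2^{\Ocal(k)}$. The key point is that tangle properties are degree conditions measured in the whole digraph~$D$, so they survive taking subsets. This avoids the delicate probabilistic survival analysis of Kitsunai et al.

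Inside this set we then build the three parts $(A,B,C)$ of the triple, each of size~$k$, together with the required linkages $A\to B$, $B\to C$, $C\to A$. For a degree tangle, vertices with nearly equal in-degree are used. By a Ramsey-type argument on the tournament induced on the set, where a transitive subtournament of size $\log$ of the set size exists, we obtain an ordering. The $k$ linkages are then found by Menger: if fewer than~$k$ disjoint paths existed, a small separator would create a large in-degree discrepancy between the two sides. That discrepancy contradicts closeness of degrees, up to an additive $h$ per vertex from missing arcs. Each Ramsey/halving step costs a constant factor, which explains the $2^{27k+9}$.

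For a matching tangle, we proceed similarly, using the matching arcs directly as one of the three linkages. The steps are all polynomial: sorting, sweeping, greedy selection, tournament-transitive extraction, and max-flow computations on sets of size $(h+1)2^{\Ocal(k)}$. This gives the $(h+1)^3 2^{\Ocal(k)} n^2$ bound, or alternatively $\Ocal(n^5)$ by running flows on all of~$D$.

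The main obstacle I expect is the first step in the \hsemi{h} setting: proving that the sweep's bags form a valid path decomposition. In semicomplete digraphs, an arc backwards across the order is controlled purely by degree counting. Here, missing arcs break the counting, so I would first try to absorb them by enlarging each bag with the at most~$h$ non-neighbours of each window vertex. This multiplies the width by $(h+1)$, and the remaining task is to re-verify the arc condition with this slack.
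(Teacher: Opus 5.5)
\textbf{There is a genuine gap.} Your second step does not produce a $k$-triple, and that step is the heart of the theorem.

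A $k$-triple is a statement about single arcs: $A$ complete to $B$, $B$ complete to $C$, and $k$ disjoint \emph{arcs} from $C$ to $A$. Menger-type linkages give internally disjoint paths. That is a jungle, not a triple.

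Nor can a transitive subtournament host the triple. $A$ complete to $B$ complete to $C$ forces the order $A$, $B$, $C$, so the matching arcs from $C$ to $A$ point backwards. They must therefore be found outside the transitive set, while each matched head stays complete to $B$ and $B$ stays complete to each matched tail. Your plan gives no mechanism for this.

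Passing from a jungle to a triple via Ramsey arguments is exactly what Fradkin and Seymour do, and it costs a triply exponential bound. So ``each halving step costs a constant factor'' cannot yield $2^{27k+9}$.

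The missing idea in the paper is a counting argument with three parts:
\begin{itemize}
  \item Inside a semicomplete part $Z'$ of a $(J,4)$-short jungle, choose by conditional expectations a set $M$ of $8k$ vertices. It has at least $|Z'|/8^{8k}$ common in-neighbours and at least as many common out-neighbours (\zcref{lem:sandwich}).
  \item Along each short jungle path from a common out-neighbour to a common in-neighbour, a quarter $F$ of $M$ ``flips'' at a single arc $(c,a)$. Thus $a$ is complete to $F$ and $F$ is complete to $c$ (\zcref{lem:flip}).
  \item Take $k\cdot 8^k$ such paths whose flip arcs have distinct ends, and pigeonhole over the $k$-subsets of $M$. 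This gives $k$ flip arcs sharing a common middle set $B$.
\end{itemize}

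Second, you never address that the triple must live in $D$. The certificates of large directed pathwidth (degree counts, linkage paths) may run through pairs of non-neighbours. Passing to a semicomplete subset of a tangle does not fix this: a subset of a tangle is just a smaller tangle, and any linkages or matching arcs you then need come from the whole digraph.

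The paper runs the semicomplete machinery on the detour completion $D^\star$, where each non-arc is oriented towards the side with at least as many detours. When many flip arcs $(c,a)$ are non-arcs of $D$, it replaces $a$ or $c$ by middle vertices of detours from $c$ to $a$, using K\H{o}nig's theorem and an out-degree count (\zcref{lem:detour-pairs}). \zcref{fig:overview-detour-direction} shows why the orientation matters: with an arbitrary orientation, the completion may contain triples that $D$ does not.

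Finally, the step you flag as the main obstacle is the routine one. The Fomin--Pilipczuk approximation with $k+h$ in place of $k$ already works on $D$ itself (\zcref{thm:tangles-or-path-decomposition}). The slack $h$ is absorbed into the tangle parameters rather than by enlarging bags with non-neighbours. The difficulty lies entirely in turning large directed pathwidth into direct-arc structure inside $D$.
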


The lower bounds of \zcref{lem:large-dpw-to-triples-tight} show that no bound subexponential in~$k$ or sublinear in~$h$ is possible.
Thus, the linear dependence on~$h$ is best possible, and the dependence on~$k$ is best possible up to the constant in the exponent.
The gain is large even for semicomplete digraphs, where the proofs of \textcite[(1.1) and (2.6)]{FradkinS13} give a bound triply exponential in~$k$.
For \hsemi{h} digraphs, that bound applied to the semicomplete subgraph of large directed pathwidth given by \textcite[Theorem 2]{KitsunaiKT15} yields a $k$-triple, but only by an existential proof.
On the algorithmic side, \textcite[Section 6]{GomesLS25} asked for a fixed-parameter algorithm that computes a triple in \hsemi{h} digraphs, and ours runs in polynomial time.
For semicomplete digraphs, \textcite[Lemma 44]{Pilipczuk13} builds a $k$-triple in $\Ocal(n^3 \cdot \log n)$ time from a certificate of large directed pathwidth, namely a given jungle whose size is triply exponential in~$k$.
Such a jungle is found in polynomial time when the directed pathwidth is large enough~\cite[Theorem 4.12; Lemmas 3.9 and 3.12]{FominP19}, as also done in the same work~\cite[proof of Theorem 106]{Pilipczuk13}, so for semicomplete digraphs the gain of \zcref{thm:large-dpw-to-triples-simplified} is the single-exponential threshold.
As a consequence of \zcref{thm:large-dpw-to-triples-simplified}, the approach for \textsc{Rooted Immersion} in semicomplete digraphs~\cite[Theorem 9.9]{FominP19} gives a run time which is only doubly exponential in ${(|V(H)| + |E(H)|)}^2$, instead of quadruply exponential (see \zcref{sec:rooted-immersion}).

Our second result extends the approximation algorithm for directed pathwidth by \textcite[Theorem 4.12]{FominP19} from semicomplete to \hsemi{h} digraphs.

\clearpage
\pagebreak
\begin{theorem}[Simplified]
\label{thm:tangles-or-path-decomposition-simplified}
  Let~$D$ be an \hsemi{h} digraph on~$n$ vertices and~$k$ be a positive integer.
  There is an algorithm with run time~$\Ocal((k + h) \cdot n^2)$ that outputs a path decomposition of~$D$ of width at most $6(k + h)$ or a certificate that $\dpw(D) > k$.
\end{theorem}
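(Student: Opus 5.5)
We adapt the approximation algorithm of Fomin and Pilipczuk~\cite[Theorem 4.12]{FominP19}, treating the at most $h$ non-neighbors of each vertex as a bounded error term in every degree computation. The certificates of $\dpw(D) > k$ will be degree tangles and matching tangles (\zcref{def:degree-tangles,def:matching-tangles}), with thresholds shifted by~$h$.

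First we sort the vertices by outdegree, $v_1, \ldots, v_n$ with $d^+(v_1) \le \cdots \le d^+(v_n)$, using counting sort in $\Ocal(n)$ time after computing degrees in $\Ocal(n^2)$. The key inequality replacing the semicomplete one is the following: for any $X \subseteq V(D)$,
\[ \sum_{v \in X} d^+(v) \ge \binom{|X|}{2} - h|X|, \]
since at most $h|X|/2$ pairs inside~$X$ are non-adjacent. We use it to prove that a degree tangle certifies large pathwidth. Suppose there is a set of about $4(k+h)+2$ vertices whose outdegrees lie in a window of width about $k+h$. Take a path decomposition of width at most~$k$ and look at the bag where the first vertex of this set is introduced or forgotten. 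The vertices of the set placed on the two sides of that bag then have incompatible outdegree constraints, unless the bag is large. In the semicomplete argument the ``$-1$'' slack comes from the missing arc count. Here it becomes ``$-h$'', and that is exactly absorbed by enlarging the window by~$h$.

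The same bookkeeping handles matching tangles. Take disjoint sets $X, Y$ of size $k+h+1$ with a matching from $X$ to $Y$ and outdegrees in $X$ smaller than those in $Y$ by more than about $k+h$. A vertex $y \in Y$ placed before its partner $x \in X$ would need almost all of the far side of the decomposition as out-neighbors. Non-neighbors weaken this only by~$h$.

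Finally, if neither tangle exists, we build the decomposition greedily as in~\cite{FominP19}. Scan the degree order and let bag~$i$ consist of the vertices with outdegree in a window around $d^+(v_i)$, of width $\Ocal(k+h)$. Absence of a degree tangle bounds the window sizes. For arcs going backward in the order, we add to each bag a minimum vertex cover of the bipartite graph of such backward arcs crossing position~$i$ between vertices far apart in degree. By K\H{o}nig's theorem, absence of a matching tangle bounds this cover by $k+h$. Summing the pieces gives width at most $6(k+h)$, matching the constant~$6$ of the semicomplete version.

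For the running time, we maintain the matchings incrementally as the scan moves, augmenting at most once per step. This costs $\Ocal(n)$ augmenting-path searches, each of cost $\Ocal(n^2)$, but only $\Ocal(k+h)$ are needed per cover, giving $\Ocal((k+h) \cdot n^2)$ in total as in~\cite{FominP19}.

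The main obstacle is verifying that the greedy decomposition is valid: every arc $uv$ of~$D$ must have its endpoints together in a bag or go forward. In the semicomplete case this uses the fact that every pair of vertices far apart in degree is adjacent. Here non-adjacent pairs need no covering, but we must recheck that the windows remain intervals, so that each vertex occupies a contiguous set of bags. We also must recheck that backward arcs between vertices whose degrees differ by less than the window width always land in a common bag. The window widths must be set so that the extra~$h$ appears additively in all three places: the window, the cover, and the tangle thresholds.
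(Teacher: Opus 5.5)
Your overall plan is the paper's: run the Fomin--Pilipczuk approximation algorithm with $k+h$ in place of $k$ and $\ell = 4(k+h)$, and show that the tangles it outputs still certify $\dpw(D) > k$. But you put the work in the wrong place. Their construction of the decomposition is valid in every digraph. It consists of the out-degree ordering, vertex covers of the bipartite graphs of arcs from the left of a window to its right, and a monotone sequence of separations turned into a path decomposition. Semicompleteness enters their proof only where the output tangles are shown to be obstructions. Non-arcs impose no condition on a path decomposition, so there is nothing about windows, intervals or backward arcs to recheck. Your remark that the semicomplete case relies on vertices far apart in degree being adjacent is not where semicompleteness is used. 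The run time is simply their bound with $k+h$ in place of $k$; your own count of $\Ocal(n)$ augmentations at $\Ocal(n^2)$ each would give $\Ocal(n^3)$. What actually has to be proved are the two certificate lemmas, and there the proposal has a genuine gap.

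For a degree tangle $U$ with $|U| \geq 4(k+h)+2$ and out-degrees within $k+h$, take a separation $(A,B)$ of order at most $k$ with $|A| = \alpha = \min_{v \in U}\deg^+(v)$, given by a nice decomposition of width at most $k$. Then $U$ misses $A \setminus B$, and $X = U \cap (B \setminus A)$ has at least $|U| - k$ vertices, each with an arc to every adjacent vertex of $A \setminus B$. An inequality on $\sum_{v \in X}\deg^+(v)$ such as yours can only give, by averaging, a vertex of $X$ with out-degree about $(\alpha - k) + (|X|-1)/2 - h$. Exceeding $\alpha + k + h$ this way needs $|U| \geq 5k + 4h + 2$ (so $5k+2$ instead of $4k+2$ already when $h = 0$). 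You would have to enlarge $\ell$ and would lose the width $6(k+h)$.

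The missing idea is to use the width bound again inside $X$. The digraph $D[X]$ has directed pathwidth at most $k$, hence a vertex $u$ with at most $k$ in-neighbors in $D[X]$ (the last vertex introduced in a nice decomposition). Since $u$ has at most $h$ non-neighbors, $\deg^+(u) \geq (\alpha - k) + (|X| - 1 - k) - h$, which exceeds $\alpha + k + h$ once $|U| \geq 4k + 2h + 2$.

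Your matching-tangle sketch also puts the pressure on the wrong vertex. With $\alpha = \min_{w \in W}\deg^+(w)$ and the same kind of separation, you get $W \subseteq B$. It is a vertex $u$ of the low-degree side $U$ lying in $B \setminus A$ that would satisfy $\deg^+(u) \geq |A \setminus B| - h \geq \alpha - k - h$, forcing $\deg^+(w) > \alpha$ for every $w \in W$. Hence $U \subseteq A$, and among $k+1$ disjoint matching arcs from $U$ to $W$ one avoids $A \cap B$ and goes from $A \setminus B$ to $B \setminus A$, a contradiction.
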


The certificate is a degree tangle or a matching tangle of~$D$ itself (\zcref{def:degree-tangles} and \zcref{def:matching-tangles}), so these obstructions remain obstructions in \hsemi{h} digraphs.
In addition, \zcref{cor:dpw-approximation} approximates directed pathwidth within a factor~\mbox{$6(h + 1)$} \emph{in polynomial time}, which for $h = 0$ recovers the factor six for semicomplete digraphs~\mbox{\cite[Theorem 4.12]{FominP19}}.
In contrast, deciding whether $\dpw(D) \leq k$ via the algorithm by \textcite[Theorem 1]{KitsunaiKT15} requires exponential time \mbox{${(h + 2k + 1)}^{2k} \cdot n^{\Ocal(1)}$}.

Our third result extends the fixed-parameter algorithm by~\textcite[Theorem~9.9]{FominP19} for \ADP from semicomplete to \hsemi{h} digraphs, parameterized by~$k$ and~$h$.

\begin{theorem}[store=adpfpt]
    \label{thm:adp-fpt-simplified}
    Let $s = 2^{\Ocal(k^3 + kh)}$.
    There is an algorithm with run time~$2^{s} \cdot n^3$ that solves every instance of {\sc ADP} with~$k$ paths on an \hsemi{h} digraph with~$n$ vertices.
\end{theorem}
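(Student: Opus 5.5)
The plan is the irrelevant-vertex win--win of \textcite[Theorem~9.9]{FominP19}, transferred to \hsemi{h} digraphs with \zcref{thm:large-dpw-to-triples-simplified} and \zcref{thm:tangles-or-path-decomposition-simplified} as the structural inputs. Fix a threshold $w = (h+1)\cdot g(k)\cdot 2^{27 g(k)+9}$, where $g(k)$ is the triple size needed for the irrelevant-vertex rule plus a term for the $2k$ terminals. We then loop on the current instance. First run the algorithm of \zcref{thm:tangles-or-path-decomposition-simplified} with parameter $w$. If it returns a path decomposition of width at most $6(w+h)$, solve the instance directly by dynamic programming on this decomposition. Otherwise $\dpw(D) > w$, and \zcref{thm:large-dpw-to-triples-simplified} yields a $g(k)$-triple in polynomial time.

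The terminals must be avoided, so the triple should live in $D - T$, where $T$ is the set of at most $2k$ terminals. Deleting $T$ keeps the digraph \hsemi{h} and lowers $\dpw$ by at most $2k$. Taking $w$ large enough therefore guarantees a $g(k)$-triple in $D-T$. Next we find an irrelevant vertex in this triple and delete it. The resulting instance is equivalent, still \hsemi{h}, and has one fewer vertex, so we repeat; there are at most $n$ rounds.

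The main obstacle is the irrelevant-vertex lemma for \hsemi{h} digraphs. The argument of Fomin and Pilipczuk rerouted paths through a triple $(A,B,C)$ using the abundance of short paths inside a semicomplete digraph. Here the vertices of the triple pairwise may miss up to $h$ arcs, and a path of the solution may pass through a vertex using non-adjacencies. I would try the following. Since every vertex has at most $h$ non-neighbours, from a $g(k)$-triple with $g(k) = \Ocal(k^2 + kh)$ we can greedily extract a sub-triple that is semicomplete on its $3k'$ vertices, discarding at most $(h+1)$ vertices per pick, as in the selection procedure described in the introduction. Alternatively, we can keep extra slack so that every vertex of the triple has many in- and out-neighbours within each part. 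Then we rerun the rerouting: a solution uses at most $k$ paths through the chosen vertex $v$. Each detour $u\to v\to u'$ is replaced by a path of length at most three through unused triple vertices, using the triple's linkage property together with the fact that $v$'s arc-neighbourhood inside the triple misses at most $h$ vertices. The counting must ensure that the at most $\Ocal(k^2)$ arcs already used by the detours, together with the $h$-deficiency, never exhaust the triple. This is where the $kh$ term in the exponent comes from.

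For the dynamic programming step, I would adapt the $\Ocal$-time routine of \textcite{FominP19} for \ADP on semicomplete digraphs of bounded pathwidth. Alternatively, via \zcref{thm:large-dpw-to-triples-simplified}'s companion statements, I would argue directly over a path decomposition of width $p = \Ocal(k^3 + kh)$. At each bag we record, for each of the $k$ paths, how it crosses the bag boundary and which arcs of the bounded-size boundary are used, giving $2^{s}$ states with $s = 2^{\Ocal(p)}$ after accounting for arc-sets incident to the bag. Note that directed cliquewidth is not bounded here, so the Courcelle route is unavailable and the explicit DP is needed. For the running time, each round costs $\Ocal(n^2)$ up to parameter factors, and the DP costs $2^{s}\cdot n$, for a total of $2^{s}\cdot n^3$.
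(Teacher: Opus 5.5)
\textbf{Main gap: the dynamic programming step.} You run a DP on an arbitrary small-width path decomposition of~$D$, with states recording how the paths cross the bag. In a directed path decomposition, however, the arcs crossing the separation $(A_i,B_i)$ are not confined to the bag. There can be arbitrarily many arcs from not-yet-introduced vertices to forgotten ones, and solution paths use them, so your states do not capture the interface.

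The DP of \textcite[Theorem 10.8]{FominP19} handles these arcs only because every introduced vertex is complete to all forgotten vertices. Its states therefore never need to remember \emph{where} a segment re-enters the forgotten part. In an \hsemi{h} digraph this property fails exactly at the non-arcs. It holds for decompositions that \emph{cover the non-arcs} (\zcref{obs:covering-non-arcs}), that is, path decompositions of~$\hat{D}$, and small $\dpw(D)$ does not provide such decompositions.

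The example behind your remark on cliquewidth shows this. The mirrored grid $S_{6k}$ is acyclic and \hsemi{1}, so your loop goes straight to the DP with a width-$0$ decomposition. Yet every decomposition covering its non-arcs has width at least $k-2$. The obstruction to such decompositions is the anti-triple, which does not force large directed pathwidth, so a win--win driven only by \zcref{thm:large-dpw-to-triples-simplified} cannot close.

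The missing idea has two parts. First, run the approximation and jungle extraction on $\widehat{D'\setminus T}$ (\zcref{lem:covering-obstructions}) rather than on~$D$. Second, handle a third outcome, an $\ell$-anti-triple $(A,B,C)$. For non-neighbours $a\in A$, $c\in C$ we have $B\subseteq\detours{D'}{a}{c}$, so with $\ell\geq 2k$ adding the arc $(a,c)$ preserves the answer by \zcref{prop:avoidable-arcs}. There are at most $hn/2$ non-adjacent pairs, so interleaving these densifications with vertex deletions gives $\Ocal((h+1)n)$ rounds.

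\textbf{Secondary gap: the irrelevant vertex.} Your sketch targets the wrong difficulty. Non-adjacencies inside the triple do not matter, and neither does the neighbourhood of the deleted vertex itself. The real problem is that the predecessor of the deleted vertex on a solution path lies outside the triple. It may have only few out-neighbours in~$B$, all of them already taken, and symmetrically for the successor.

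\textcite{FominP19} choose the deleted vertex with their Lemma~9.8, which needs semicompleteness. The paper replaces it by the counting \zcref{lem:rich-vertices}: pick $x\in B$ outside the sets $Y,Y'$ of vertices with a poor in- or out-neighbour. Then every in-neighbour of~$x$ reaches many untaken vertices of~$B$ by at most two unused arcs. This is where the $h$ in the triple size $20k(20k^2+20k+h+1)+1$ comes from. Extracting a semicomplete sub-triple does not address this.

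\textbf{Running-time bookkeeping.} The decomposition you feed to the DP has width $p=2^{\Ocal(k^3+kh)}$, not $\Ocal(k^3+kh)$. A DP with $2^{2^{\Ocal(p)}}$ states would then exceed the claimed bound. You need the $2^{\Ocal(kp\log p)}$ dependence of the Fomin--Pilipczuk DP.
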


For the larger class of digraphs with no independent set of size~$h + 2$, which contains the \hsemi{h} digraphs, \textcite[(4.5)]{FradkinS15} give an \XP algorithm for \ADP parameterized by~$k$ and~$h$, whereas \zcref{thm:adp-fpt-simplified} gives a fixed-parameter algorithm parameterized by~$k$ and~$h$.

Our fourth result is a proof that directed pathwidth does not bound directed cliquewidth (\zcref{def:directed-cliquewidth}) in \hsemi{h} digraphs, already for $h = 1$.
This contrasts with the fact that in semicomplete digraphs, their directed cliquewidth is at most the directed pathwidth plus two~\cite[Lemma 2.14]{FominP19}.

\begin{theorem}[store=cliquewidthpathwidth]
    \label{thm:cliquewidth-and-pathwidth}
    For every integer~$k \geq 1$, there exists an acyclic \hsemi{1} digraph, hence of directed pathwidth~$0$, whose directed cliquewidth is at least~$k$.
\end{theorem}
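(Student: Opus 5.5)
The plan is to take a transitive tournament, delete a perfect matching of its arcs, and show by the standard ``balanced subexpression'' argument that a cleverly chosen matching forces many labels. Concretely, I fix $m\geq 1$, put $n=m^2$, and take vertices $v_1,\dots,v_{2n}$ with an arc $v_i\to v_j$ for all $i<j$. I then delete a perfect matching $M$ pairing the first half with the second half by a ``transpose'' permutation: the vertex at position $(i-1)m+j$ is matched with the vertex at position $n+(j-1)m+i$, for $1\le i,j\le m$. The resulting digraph $D_m$ is a subgraph of a transitive tournament, hence acyclic, so $\dpw(D_m)=0$. Every vertex loses exactly one neighbour, so $D_m$ is \hsemi{1}. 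It remains to show that the directed cliquewidth of $D_m$ tends to infinity with $m$.

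For the lower bound I will use the standard argument. Take an expression with $c$ labels for $D_m$ (\zcref{def:directed-cliquewidth}). It contains a subexpression whose vertex set $X$ satisfies $|V|/3\le |X|\le 2|V|/3$. Two vertices of $X$ carrying the same label at the root of that subexpression are treated identically by all later operations. Hence they have the same in- and out-neighbourhoods in $V\setminus X$, and $c$ is at least the number of distinct ``outside types'' in $X$. In $D_m$ there are two ways to separate types.
\begin{romanenumerate}
\item \emph{Crossing matching edges.} If $xy\in M$ with $x\in X$ and $y\notin X$, then $x$ is the only vertex of $X$ not adjacent to $y$. So the number of labels is at least the number of $M$-edges crossing $(X,V\setminus X)$.
\item \emph{Interleaving.} If $x<y<x'$ with $x,x'\in X$ and $y\notin X$, and $y$ is adjacent to both, then $x\to y\to x'$ distinguishes $x$ from $x'$. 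Every $y$ has only one non-neighbour, so two outside vertices between $x$ and $x'$ always separate them.
\end{romanenumerate}
From (ii) I will derive the following. If $X$ splits into $r$ maximal runs in the vertex order, and runs separated by gaps of length at least two are merged in a controlled way, then picking one vertex per run gives pairwise distinct types. Hence $c\ge r/2-\Ocal(1)$, with the constant absorbing runs separated by a single outside vertex, which I will handle by a small case check.

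It therefore suffices to prove a purely combinatorial lemma. If $X$ is a union of at most $r$ intervals of the order with $|V|/3\le|X|\le 2|V|/3$, then at least $f(m,r)$ edges of $M$ cross $X$, where $f(m,r)\to\infty$ as $m\to\infty$ for fixed $r$. Combining (i) and (ii) then gives $c\ge\min\{r/2-\Ocal(1),\,f(m,r)\}$ for some $r$, which is unbounded. For the lemma I view the first half as an $m\times m$ grid read row by row, so its intervals are essentially unions of rows plus two partial rows. The second half is the same grid read column by column, and under $M$ its intervals become unions of columns in the first grid. So $X$ gives a set $A$ of size roughly $\sim$ a few rows in the first grid and a set $B$ of size roughly $\sim$ a few columns. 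The crossing edges correspond to the symmetric difference $A\triangle B^{\top}$. A union of at most $r$ ``row-intervals'' and a union of at most $r$ ``column-intervals'' of comparable, nontrivial size must differ in $\Omega(m)$ cells, since a set of $\alpha m^2$ cells that is a union of few row blocks meets each column in the same pattern. I will make this precise via a row/column counting argument on the grid. I expect this step to be the main obstacle: the balance of $|X|$ must be transferred to both halves, including the degenerate cases where $X$ lies mostly in one half, where crossing edges are plentiful anyway. I will first try the case split according to whether $|X\cap\text{first half}|$ and $|X\cap\text{second half}|$ are both between $n/6$ and $11n/6$. Finally, choosing $m$ large compared to $k$ completes the proof of \zcref{thm:cliquewidth-and-pathwidth}.
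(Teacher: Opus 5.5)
Your proposal is correct, apart from a fixable slip in (ii), and it differs from the paper in both the construction and the counting.

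\textbf{The combinatorial lemma.} The step you flag as the main obstacle is short and needs no case split on the two halves. Let $A$ and $B$ be the cells whose first-half, respectively second-half, copy lies in $X$. Let $R_A$ be the rows entirely inside $A$ and $T_B$ the columns meeting $B$. Since $A$ is a union of full rows plus at most $2r$ partial rows, and $B$ a union of full columns plus at most $2r$ partial columns, you get $|R_A|\ge |A|/m-2r$ and $|T_B|\le |B|/m+2r$. Every cell of $R_A\times([m]\setminus T_B)$ is a crossing matching edge. Put $\epsilon=|A\triangle B|$. From $|A|+|B|\in[2m^2/3,4m^2/3]$ you get $|A|\ge m^2/3-\epsilon/2$ and $|B|\le 2m^2/3+\epsilon/2$. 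Hence $\epsilon\ge(m/3-2r-\epsilon/(2m))^2$, which forces $\epsilon\ge m$ once $m$ is large compared with $r$. The nearly one-sided cases are included automatically.

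\textbf{Correction in (ii).} A single outside vertex $y$ with $x<y<x'$ already separates $x$ from $x'$. Since $y$ has only one non-neighbour, it is adjacent to $x$ or to $x'$. Since all arcs go forward, $(x,y)\in E$ rules out $(x',y)$, and $(y,x')\in E$ rules out $(y,x)$. So the number of labels is at least the number $r$ of runs, which is exactly the paper's block argument.

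Your version is not right as stated. Under your criterion ``adjacent to both'', two gap vertices can both fail, since they may be the partners of $x$ and of $x'$. Also, runs separated by a single outside vertex need not be $\Ocal(1)$ in number, so the planned case check would not give $r/2-\Ocal(1)$. The one-vertex argument removes the issue.

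\textbf{Comparison with the paper.} The paper uses the mirrored grid $S_m$: a single $m\times m$ grid ordered column by column, in which the transposed vertices $v_{i,t}$ and $v_{t,i}$ are non-neighbours. Instead of a balanced subexpression, it takes a minimal subexpression whose vertex set contains a whole column $C_c$. One side $X$ of that disjoint union holds at least $m/2$ vertices of $C_c$ but no whole column, so each block of $X$ meets at most two columns. Either $X$ meets at least $2k$ columns and so has at least $k$ blocks. Or it meets fewer, and then at least $k+1$ vertices $v_{c,t}\in X$ have their partner $v_{t,c}$ in a column disjoint from $X$.

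This local argument needs neither the separator lemma nor a global row-versus-column count, and works with $m=6k$. Your two-copy construction makes the row-major/column-major transposition explicit and fits the standard balanced-cut template for cliquewidth lower bounds, so it would adapt readily to other matchings. The price is a longer argument and a constant-factor larger digraph. Both give $\Ocal(k^2)$ vertices.
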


\subparagraph*{Further research.}
Two questions arise naturally.
First, by \zcref{thm:large-dpw-to-triples-simplified} and \zcref{lem:large-dpw-to-triples-tight}, the exponent of the best possible bound lies between $(2/3 - o(1))k$ and $(27 + o(1))k$, and we do not know the right one, even for tournaments.
Second, we consider the \textsc{Directed Disjoint Paths} (\DDP) problem, which is defined as \ADP except that the paths must be pairwise vertex-disjoint.
On semicomplete digraphs, \DDP is fixed-parameter tractable parameterized by the number~$k$ of paths and the directed pathwidth, and the question is open parameterized by~$k$ alone~\cite[Section 11]{FominP19}, that is, for large directed pathwidth.
We remark that an example~\cite{FominP19} shows that irrelevant vertex arguments relying on $k$-triples are not viable for \DDP.
On \hsemi{h} digraphs, it is open parameterized by $k + h$ for both small and large directed pathwidth (compare with \textcite[Section 6]{GomesLS25}).

\subparagraph*{Organization.}
In \zcref{sec:overview} we explain our methods, and in \zcref{sec:prelim} we fix the notation and recall the known results that we use.
In \zcref{sec:polytime-app-algorithm} we prove the complete version of \zcref{thm:tangles-or-path-decomposition-simplified}, and in \zcref{sec:large-dpw-to-triples} the complete version of \zcref{thm:large-dpw-to-triples-simplified}.
In \zcref{sec:adp-algorithm} we introduce anti-triples and prove \zcref{thm:adp-fpt-simplified}.
Finally, in \zcref{sec:cliquewidth-and-pathwidth} we prove \zcref{thm:cliquewidth-and-pathwidth}.

\section{Overview of our techniques}
\label{sec:overview}

Our main result (\zcref{thm:large-dpw-to-triples-simplified}, in full \zcref{thm:large-dpw-to-triples}) is that directed pathwidth more than $f(k, h) = (h+1) \cdot k \cdot 2^{27k + 9}$ forces a $k$-triple in every \hsemi{h} digraph.
As for semicomplete digraphs~\cite[Lemma 44 and the proof of Theorem 106]{Pilipczuk13}, the $k$-triple can also be found in polynomial time.
For semicomplete digraphs, the proofs of \textcite[(1.1) and (2.6)]{FradkinS13} give a bound triply exponential in~$k$, since the second rests on Ramsey's theorem, whereas our bound~$f(k, h)$ is single exponential in~$k$ and linear in~$h$.
The new idea is to build the triple by counting, without Ramsey's theorem, and the larger class costs only a factor~$h + 1$.
This section explains how, and offers an overview of the techniques used in our main contributions.
The lower bound in~$k$ of \zcref{lem:large-dpw-to-triples-tight} is obtained through standard probabilistic arguments.
The digraphs of \zcref{thm:cliquewidth-and-pathwidth} are obtained by removing a matching from an acyclic tournament (\zcref{def:mirrored-grid}).

\subparagraph*{Approximating directed pathwidth.}
For semicomplete digraphs, the approximation algorithm of \textcite[Theorem 4.12]{FominP19} outputs a path decomposition of width at most $6k$ or a tangle that certifies $\dpw(D) > k$.
A \emph{degree tangle} is a set of $4k + 2$ vertices whose out-degrees differ by at most~$k$, and a \emph{matching tangle} consists of $k + 1$ disjoint arcs such that the out-degree of every head exceeds that of every tail by more than~$k$ (\zcref{def:degree-tangles,def:matching-tangles}).
Both certify large directed pathwidth because a path decomposition of small width sorts the vertices by out-degree, and the corresponding proofs~\cite[Lemmas 3.8 and 3.11]{FominP19} need semicompleteness only to count as out-neighbors of a vertex all the vertices that are not its in-neighbors.
In an \hsemi{h} digraph, every vertex has at most~$h$ non-neighbors, so these counts are off by at most~$h$.
Allowing a slack of~$h$ in the tangles, namely $2h$ more vertices and out-degrees within $k + h$ for degree tangles and a gap of more than $k + h$ for matching tangles, lets us repeat the arguments for semicomplete digraphs (\zcref{lem:degree-tangles-large-dpw,lem:matching-tangles-large-dpw}).
Hence their algorithm runs on~$D$ itself with $k + h$ in place of~$k$, without first making~$D$ semicomplete (\zcref{thm:tangles-or-path-decomposition}).
The triples of the next part, in contrast, do not come from these tangles of~$D$ but from a \emph{completion} of~$D$, a semicomplete digraph obtained from~$D$ by adding arcs between non-neighbors (\zcref{def:detours-and-completions}).

\subparagraph*{Finding triples by counting.}
A $k$-triple~$(A, B, C)$ consists of three disjoint sets of~$k$ vertices such that~$A$ is \emph{complete to}~$B$, that is, every vertex of~$A$ has an arc to every vertex of~$B$,~$B$ is complete to~$C$, and~$k$ disjoint arcs go from~$C$ to~$A$ (\zcref{def:k-triple}).
Every large enough jungle of a semicomplete digraph contains a $k$-triple, but the proof by \textcite[(2.6)]{FradkinS13} rests on Ramsey's theorem and needs a jungle triply exponential in~$k$ (see \zcref{sec:rooted-immersion}).
We replace the Ramsey dependence by counting arguments as follows.

First, we complete~$D$ by adding arcs between every two non-adjacent vertices.
For distinct vertices~$u$ and~$v$, we denote by $\detours{D}{u}{v}$ the set of vertices that are out-neighbors of~$u$ and in-neighbors of~$v$, that is, the middle vertices of the \emph{detours} from~$u$ to~$v$, the paths of length two.
For every two non-neighbors~$u$ and~$v$, we add the arc from~$u$ to~$v$ if $|\detours{D}{u}{v}| \geq |\detours{D}{v}{u}|$, and the arc from~$v$ to~$u$ if $|\detours{D}{v}{u}| \geq |\detours{D}{u}{v}|$.
Notice that both directions are added in case of equality.
This process generates the \emph{detour completion}~$D^\star$ of~$D$ (\zcref{def:detours-and-completions}), on which we can apply the methods of \textcite{FominP19} since~$D^\star$ is semicomplete.

Second, we run on~$D^\star$ their algorithm~\cite[Theorem 4.12 and Lemmas 3.9 and 3.12]{FominP19} through~\zcref{prop:semicomplete-jungle-algorithm}.
It outputs a path decomposition of~$D^\star$ of small width, which is also a path decomposition of~$D$, or a \emph{short jungle} of~$D^\star$, consisting of a large set~$Z$ of vertices such that any two of them are joined by many internally disjoint paths of length at most four (\zcref{def:jungles}).
Notice that these paths may use arcs from $E(D^\star) \setminus E(D)$, which is why the direction of the added arcs matters.
For example, see the acyclic and \hsemi{1} digraph of~\zcref{fig:overview-detour-direction}.

\begin{figure}[ht]
  \centering
  \begin{tikzpicture}
  \foreach \i in {1,...,3} {
      \node[blackvertex, label={[yshift=-.1cm]-90:{$a_\i$}}] (a\i) at (1.5*\i-1.5, 0) {};
      \node[blackvertex, label={[yshift=.1cm]90:{$c_\i$}}] (c\i) at (1.5*\i-1.5, 1.7) {};
      \node[blackvertex, label={[yshift=.1cm]90:{$b_\i$}}] (b\i) at (1.2*\i+4.4, 0.85) {};
    }
  \foreach \i/\j in {1/2, 1/3, 2/1, 2/3, 3/1, 3/2} {
      \draw[arrow] (a\i) -- (c\j);
    }
  \foreach \i in {1,...,3} {
      \draw[arrow, very thick, dashed] (a\i) -- (c\i);
    }
  \node[rectangle, draw, inner sep=3pt, fit=(a1)(a3), label=180:{$A$}] (A) {};
  \node[rectangle, draw, inner sep=3pt, fit=(b1)(b3), label=0:{$B$}] (B) {};
  \node[rectangle, draw, inner sep=3pt, fit=(c1)(c3), label=180:{$C$}] (C) {};
  \draw[-{Latex[length=2.5mm]}, line width=1.2pt, gray] (A.east) -- (B.south west);
  \draw[-{Latex[length=2.5mm]}, line width=1.2pt, gray] (B.north west) -- (C.east);
\end{tikzpicture}%
%
  \caption{A digraph~$D$. It has all arcs from~$A$ to~$B$ and from~$B$ to~$C$, drawn as thick gray arrows, the arcs from~$a_i$ to~$c_j$ for $i \neq j$, and an acyclic tournament inside each of~$A$,~$B$, and~$C$, which is omitted. Its non-neighbors are the pairs~$a_i$,~$c_i$. The detour completion of~$D$ adds the dashed arcs~$(a_i, c_i)$, whereas a completion that adds the arcs~$(c_i, a_i)$ contains the $3$-triple~$(A, B, C)$.}
  \label{fig:overview-detour-direction}
\end{figure}

Third, we build a triple of~$D$ from the jungle~$Z$ by counting.
We first keep a subset~$Z'$ of~$Z$ that induces a semicomplete subgraph of~$D$, which loses a factor~$h + 1$ on the size (\zcref{lem:semicomplete-from-h-semicomplete}).
For a set~$X$ of vertices, we say that~$v$ is a \emph{common in-neighbor} of~$X$ if~$\{v\}$ is complete to~$X$, and a \emph{common out-neighbor} of~$X$ if~$X$ is complete to $\{v\}$.
Now, we find a set~$M \subseteq Z'$ of $8k$ vertices with many common in- and out-neighbors in~$Z'$ (\zcref{lem:sandwich}).
Any three vertices of a semicomplete digraph span a path of length two.
Hence, on average over the sets~$M$ of $8k$ vertices, many pairs~$(p, q)$ have every vertex of~$M$ as the middle of a detour from~$p$ to~$q$.
Namely,~$p$ is a common in-neighbor of~$M$ and~$q$ a common out-neighbor.
We choose the vertices of~$M$ one at a time, maintaining the average, so that the common in-neighbors and the common out-neighbors of~$M$ each keep a~$2^{-24k}$ fraction of~$Z'$.

Next, we look at a path of the jungle from a common out-neighbor of~$M$ to a common in-neighbor.
Every vertex~$x$ of~$M$ has an arc to the first vertex of the path and an arc from its last vertex, and, for all but few paths,~$x$ is adjacent to the vertices between.
Hence~$x$ is an in-neighbor of the tail~$c$ and an out-neighbor of the head~$a$ of some arc~$(c, a)$ of the path, and we say that~$x$ \emph{flips} at~$(c, a)$.
Since the path has at most four arcs, at least a quarter of~$M$, that is, $2k$ vertices, flip at the same arc~$(c, a)$, so~$a$ is complete to that quarter and the quarter is complete to~$c$ (\zcref{lem:flip}).
Notice that $(c, a)$ is then a candidate for a matching arc of a triple whose middle set lies in this quarter.
It remains to make the candidates agree.
We take $k \cdot 8^k$ paths whose candidate arcs have distinct ends.
A set of $2k$ vertices contains more than an~$8^{-k}$ fraction of the $k$-subsets of~$M$, so some~$k$ vertices of~$M$ lie in the quarters of~$k$ of these paths (\zcref{clm:common-subset}).
These~$k$ vertices form the middle set~$B$, the heads of the~$k$ candidate arcs form~$A$, and their tails form~$C$, so $(A, B, C)$ is a $k$-triple of~$D$ if the candidate arcs are arcs of~$D$ (\zcref{lem:jungle-extraction}).

Finally, we repair the candidate arcs that~$D^\star$ added.
If many candidate arcs~$(c, a)$ are not arcs of~$D$, then~$a$ and~$c$ are non-neighbors of~$D$, and the same count, now over the $2k$-subsets of~$M$, gives many such pairs~$(a, c)$ whose quarters share a set~$B$ of $2k$ vertices (\zcref{lem:jungle-extraction}).
Here the direction of the added arcs pays off.
Every vertex of~$B$ is the middle of a detour from~$a$ to~$c$, so $|\detours{D}{a}{c}| \geq 2k$, and since~$D^\star$ added the arc~$(c, a)$, also $|\detours{D}{c}{a}| \geq |\detours{D}{a}{c}| \geq 2k$.
A middle vertex~$u$ of a detour from~$c$ to~$a$ can stand in for~$a$ or for~$c$, since the arcs~$(c, u)$ and $(u, a)$ belong to~$D$.
Notice that, if~$u$ is adjacent to every vertex of~$B$, then~$u$ is complete to half of~$B$ or half of~$B$ is complete to~$u$, so~$u$ replaces~$a$ in the first case and~$c$ in the second.
The stand-ins of different pairs must be distinct, so a pair needs many detours from~$c$ to~$a$.
Unless the arcs of~$D$ from the vertices~$c$ to the vertices~$a$ already contain~$k$ disjoint arcs, which immediately give a triple, with K{\H{o}}nig's Theorem and by counting out-degrees we show that many pairs have many such detours (\zcref{thm:konig} and \zcref{obs:out-degree-h-semicomplete}).
Counting subsets of size~$k$ once more gives~$k$ pairs whose stand-ins agree on the same half of~$B$, which is the middle set of a $k$-triple of~$D$ (\zcref{lem:detour-pairs}).
Running the same extraction on another completion, the one with both arcs between non-neighbors, leads to the anti-triples of the next part.

\subparagraph*{Anti-triples.}
The dynamic programming of \textcite[Theorem 10.8]{FominP19} over a path decomposition of a semicomplete digraph needs semicompleteness only when a bag introduces a vertex~$v$, where it uses that every vertex forgotten so far is an out-neighbor of~$v$, which holds in semicomplete digraphs since a path decomposition has no arc from a forgotten vertex to~$v$.
So any path decomposition that \emph{covers the non-arcs}, that is, in which every two non-neighbors share a bag (\zcref{def:covering-non-arcs}), suffices to replicate this dynamic programming.
Since two vertices joined by arcs in both directions must share a bag, these decompositions are exactly the path decompositions of the completion~$\hat{D}$ of~$D$ that adds both arcs between every two non-neighbors (\zcref{sec:anti-triples}).
As~$\hat{D}$ is semicomplete, the approximation algorithm for semicomplete digraphs~\cite[Theorem 4.12]{FominP19} and the extraction of the previous part apply to it in place of~$D^\star$.
They give such a decomposition of small width, a triple of~$D$, or many pairs of non-neighbors~$a$,~$c$ and a set~$B$ with every~$a$ complete to~$B$ and~$B$ to every~$c$.
Since~$\hat{D}$ adds both arcs without comparing detours, the repair by detours of the previous part does not apply.
Instead, these pairs and~$B$ form an \emph{anti-triple}: a triple whose matching arcs from~$C$ to~$A$ are replaced by pairs of non-neighbors~$a_i$,~$c_i$ (\zcref{def:anti-triple,lem:covering-obstructions}).
As~$\hat{D}$ has the arcs~$(c_i, a_i)$, an anti-triple of~$D$ is a triple of~$\hat{D}$, so a $k$-anti-triple forces every decomposition covering the non-arcs to have width at least $k - 1$ (\zcref{cor:triple-dpw}).
Notice that an anti-triple may very well be acyclic if, for example, all arcs are oriented from~$A$ to~$C$, and thus it does \textsl{not} certify large directed pathwidth.
It is, however, detour-heavy from~$A$ to~$C$ by using the vertices of~$B$, and this fact is exploited in the algorithm for \ADP that follows.

\subparagraph*{Arc-disjoint paths.}
Assume we are given an instance~$\mathcal{I}$ of \ADP on an \hsemi{h} digraph~$D$ containing an anti-triple~$(A, B, C)$ with $|A|, |B|, |C| \geq 2k$, where~$k$ is the number of paths we want to route.
Let~$D'$ be~$D$ without the terminals of~$\mathcal{I}$, which the triples that we look for must avoid.
If $a \in A$ and $c \in C$ are non-neighbors in~$D$, then every vertex of~$B$ is the middle of a detour from~$a$ to~$c$.
Hence we may add the missing arc~$(a, c)$ without changing the answer to~$\mathcal{I}$, as observed by \textcite[(2.4)]{FradkinS15} (\zcref{prop:avoidable-arcs}): if a path of a solution uses~$(a, c)$, then the other paths use at most $k - 1$ arcs leaving~$a$ and at most $k - 1$ arcs entering~$c$, so some detour from~$a$ to~$c$ has both arcs unused, and the path can take it instead.
Adding arcs may increase the directed pathwidth, and then the algorithm may later find a large triple instead, from which we delete an irrelevant vertex.

\begin{figure}[ht]
  \centering
  \begin{tikzpicture}[
    font=\footnotesize,
    box/.style={draw, rounded corners=2pt, align=center, inner xsep=3pt, inner ysep=3pt},
    outbox/.style={box, fill=gray!12, minimum width=44mm, minimum height=10mm},
    act/.style={box, minimum width=44mm, minimum height=14.5mm},
    node distance=6mm and 4mm,
  ]
  \node[outbox] (o2) at (0, 0) {$\ell$-triple};
  \node[outbox, left=of o2] (o1) {decomposition of\\ width $\leq g(k, h)$};
  \node[outbox, right=of o2] (o3) {$\ell$-anti-triple\\ $(A, B, C)$};
  \node[box, above=7mm of o2, minimum width=140mm, minimum height=10mm] (round) {round: \zcref{lem:covering-obstructions} on $D'$ ($D$ without terminals),\\ approximation on $\widehat{D'}$ (\zcref{prop:semicomplete-jungle-algorithm}), then extraction (\zcref{lem:jungle-extraction})};
  \node[box, above=of round] (in) {instance~$\mathcal{I}$ on~$D$ with $k$ paths};
  \node[act, below=of o1] (a1) {terminals to all bags,\\ dynamic programming\\ (\zcref{lem:adp-dp}) solves~$\mathcal{I}$};
  \node[act, below=of o2] (a2) {delete irrelevant $x \in B$\\ (\zcref{lem:adp-irrelevant-vertex}):\\ $D := D \setminus \{x\}$};
  \node[act, below=of o3] (a3) {join non-neighbors $a \in A$,\\ $c \in C$ (\zcref{prop:avoidable-arcs}):\\ $D := D + (a, c)$};
  \draw[arrow] (in) -- (round);
  \foreach \i in {1, 2, 3} {
      \draw[arrow] (round.south -| o\i) -- (o\i);
      \draw[arrow] (o\i) -- (a\i);
    }
  \coordinate (r) at ($(round.east) + (3mm, 0)$);
  \coordinate (bot) at ($(a2.south) + (0, -4mm)$);
  \draw (a3.south) -- (a3.south |- bot);
  \draw[arrow] (a2.south) -- (bot) -- (bot -| r) -- (r) -- (round.east);
  \node[below=1mm, align=center] at ($(bot)!.5!(bot -| r)$) {next round: $D$ is smaller or denser,\\ $\Ocal((h + 1) \cdot n)$ rounds in total};
\end{tikzpicture}%
%
  \caption{One round of the algorithm of \zcref{thm:adp-fpt-simplified} for an instance~$\mathcal{I}$ of \ADP.}
  \label{fig:adp-rounds}
\end{figure}

Our algorithm (\zcref{thm:adp-fpt-simplified}) repeats such steps in rounds, following the win-win strategy of \textcite[Theorem 9.9]{FominP19}, and keeps an instance equivalent to~$\mathcal{I}$ (\zcref{fig:adp-rounds}).
In each round, we test if~$D'$ has a path decomposition of width at most $g(k, h)$ that covers its non-arcs, that is, if~$\widehat{D'}$ has small directed pathwidth.
For this, we run on~$\widehat{D'}$ the approximation algorithm for semicomplete digraphs~\mbox{\cite[Theorem 4.12]{FominP19}} (or ours of \zcref{thm:tangles-or-path-decomposition} with $h = 0$), and turn a tangle that it may output into a short jungle~\cite[Lemmas 3.9 and 3.12]{FominP19} (\zcref{prop:semicomplete-jungle-algorithm}).
Our extraction (\zcref{lem:jungle-extraction}) then turns this short jungle into an $\ell$-triple or an $\ell$-anti-triple with $\ell = \Ocal(k^3 + kh)$.
The approximation algorithm runs with the size of the short jungle that our extraction needs, which is only single exponential in~$\ell$, so $g(k, h) = 2^{\Ocal(k^3 + kh)}$ (\zcref{lem:covering-obstructions}).
Now, each round outputs for~$D'$ (i)~a path decomposition of width at most $g(k, h)$ that covers its non-arcs, (ii)~an $\ell$-triple, or (iii)~an $\ell$-anti-triple.

In the first case, we add the terminals to every bag and solve~$\mathcal{I}$ by the dynamic programming (\zcref{lem:adp-dp}), which ends the algorithm.
In the second case, we delete a vertex of the triple that is irrelevant for~$\mathcal{I}$.
As for semicomplete digraphs~\mbox{\cite[Theorem 9.1]{FominP19}}, the paths through this vertex can be rerouted through the triple (\zcref{prop:solution-meets-triple}), but we choose the vertex with a counting lemma for \hsemi{h} digraphs (\zcref{lem:rich-vertices,lem:adp-irrelevant-vertex}).
In the third case, we add an arc between two non-neighbors of the anti-triple as above.
So each round either solves the instance, makes it smaller, or makes it denser.
Since~$D$ has~$n$ vertices and at most $hn/2$ pairs of non-neighbors (\zcref{obs:out-degree-h-semicomplete}), the instance is solved after $\Ocal((h + 1) \cdot n)$ rounds.
By the width bound above, the dynamic programming takes the doubly exponential time of \zcref{thm:adp-fpt-simplified}.

\section{Definitions and preliminaries}
\label{sec:prelim}

In this section, we fix the notation, define detours and completions, recall the results of \textcite{FominP19} that we use, and define path decompositions that cover the non-arcs and $k$-triples.

\subparagraph*{Digraphs.}
We refer to the book of \textcite{BondyM08} for background on graph theory.
Digraphs are finite, without loops or parallel arcs, but may have both arcs~$(u,v)$ and $(v,u)$.
A digraph~$D$ has vertex set~$V(D)$ and arc set~$E(D)$.
Digraphs are given by their adjacency matrices, so testing whether $(u,v) \in E(D)$ takes constant time.
For $X \subseteq V(D)$, $D[X]$ is the subgraph of~$D$ \emph{induced} by~$X$, $D \setminus X = D[V(D) \setminus X]$, and $[k] = \{1, \ldots, k\}$ for a positive integer~$k$.
An arc~$(u,v)$ has \emph{tail}~$u$ and \emph{head}~$v$, and~$u$ is an \emph{in-neighbor} of~$v$ and~$v$ an \emph{out-neighbor} of~$u$.
We write $N^+(u)$ for the set of out-neighbors of~$u$, $\deg^+(u) = |N^+(u)|$ for its \emph{out-degree}, and $N^-(v)$ for the set of in-neighbors of~$v$.
The \emph{in-degree} of a vertex is its number of in-neighbors.
Distinct vertices~$u,v$ are \emph{adjacent} if $(u,v) \in E(D)$ or $(v,u) \in E(D)$, and otherwise they are \emph{non-neighbors} and $(u,v)$ is a \emph{non-arc} of~$D$.
A set~$X \subseteq V(D)$ is \emph{complete to} $Y \subseteq V(D)$ if $(x,y) \in E(D)$ for all $x \in X$ and $y \in Y$, where a vertex~$v$ stands for $\{v\}$.
The set~$X$ is \emph{matched to}~$Y$ if $t = |X| = |Y|$ and there are orderings~$(x_1, \ldots, x_t)$ of~$X$ and $(y_1, \ldots, y_t)$ of~$Y$ with $(x_i, y_i) \in E(D)$ for every $i \in [t]$.

A digraph is \emph{acyclic} if it has an ordering~$(v_1, \ldots, v_n)$ of its vertices with $i < j$ for every arc~$(v_i, v_j)$.
It is \emph{semicomplete} if it has no non-arcs, and a \emph{tournament} if, in addition, it has exactly one arc between any two distinct vertices.
For a non-negative integer~$h$, a digraph is \emph{\hsemi{h}} if every vertex has at most~$h$ non-neighbors.
Induced subgraphs of \hsemi{h} digraphs are \hsemi{h}, and the following lemma finds large semicomplete ones.

\begin{lemma}\label{lem:semicomplete-from-h-semicomplete}
  Let~$k$ be a positive integer,~$h$ be a non-negative integer, and $\fref{lem:semicomplete-from-h-semicomplete}(k, h) = k(h+1)$.
  Let~$D$ be an \hsemi{h} digraph on $n \geq \fref{lem:semicomplete-from-h-semicomplete}(k, h)$ vertices.
  Then there is a set~$X \subseteq V(D)$ of size~$k$ such that $D[X]$ is semicomplete, which can be found in $\Ocal(k \cdot n)$ time.
\end{lemma}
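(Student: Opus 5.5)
We will use the greedy selection procedure sketched in the introduction. Keep a set~$Y$ of candidate vertices, initially $Y = V(D)$, and an empty set $X$. As long as $|X| < k$, pick an arbitrary vertex $v \in Y$, add it to~$X$, and remove from~$Y$ the vertex~$v$ together with all non-neighbors of~$v$ in~$D$. We show that this never gets stuck and that the final set~$X$ induces a semicomplete subgraph.

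For semicompleteness, we maintain the invariant that every vertex of~$Y$ is distinct from and adjacent to every vertex already placed in~$X$. It holds initially because $X$ is empty. When $v$ is chosen from $Y$, it is adjacent to all earlier members of~$X$ by the invariant. After the update, the vertices remaining in~$Y$ are distinct from and adjacent to~$v$, so the invariant is preserved. Hence any two vertices of~$X$ are adjacent, that is, $D[X]$ has no non-arcs and is semicomplete.

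For the size bound, since~$D$ is \hsemi{h}, each step removes at most $h+1$ vertices from~$Y$: the chosen vertex and at most~$h$ non-neighbors. So after $i < k$ steps,
\[
|Y| \geq n - i(h+1) \geq k(h+1) - (k-1)(h+1) = h+1 \geq 1,
\]
and a vertex is always available. Thus the procedure picks exactly $k$ vertices.

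For the run time, each step must find the non-neighbors of~$v$ inside~$Y$. Keep~$Y$ as a Boolean array over $V(D)$ together with a linked list. For each $u \in Y$, test the two adjacency-matrix entries $(u,v)$ and $(v,u)$ in constant time, so one step costs $\Ocal(n)$. Over~$k$ steps this gives $\Ocal(k \cdot n)$, after an initial $\Ocal(n)$ setup that is absorbed since $k \geq 1$.

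The only point needing care is this time bound: we must not precompute all non-neighbor lists, since that would cost $\Theta(n^2)$. Scanning only the current~$Y$ against the chosen~$v$ avoids this. The combinatorial part follows directly from the counting above.
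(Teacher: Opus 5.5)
Your proposal is correct and follows the paper's proof essentially verbatim: the same greedy procedure that moves a vertex from $Y$ to $X$ and discards its at most $h$ non-neighbors, the same invariant giving semicompleteness, and the same count that $n \geq k(h+1)$ keeps $Y$ non-empty for $k$ steps. Your implementation details for the $\Ocal(n)$ cost per step (scanning only the current $Y$ against $v$ via the adjacency matrix) simply make explicit what the paper states in one line.
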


\begin{proof}
  Start with $X = \emptyset$ and $Y = V(D)$.
  While $|X| < k$, move a vertex~$v$ from~$Y$ to~$X$ and remove all non-neighbors of~$v$ from~$Y$.
  Every vertex of~$Y$ stays adjacent to every vertex of~$X$, so $D[X]$ is semicomplete.
  Each step removes at most $h + 1$ vertices from~$Y$, so $n \geq k(h+1)$ keeps~$Y$ non-empty until $|X| = k$.
  Each step takes $\Ocal(n)$ time, and the result follows.
\end{proof}

We use the following count of non-arcs and arcs in \zcref{sec:large-dpw-to-triples,sec:adp-algorithm}.

\begin{observation}\label{obs:out-degree-h-semicomplete}
  Every \hsemi{h} digraph on $N \geq 1$ vertices has at most $hN/2$ pairs of non-neighbors and a vertex of out-degree at least $(N - 1 - h)/2$.
\end{observation}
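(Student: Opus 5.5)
We prove the two claims of \zcref{obs:out-degree-h-semicomplete} separately, both by double counting over the $N$ vertices of an \hsemi{h} digraph~$D$.

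For the non-arcs, we count the ordered pairs $(u, v)$ of distinct non-neighbors. Each vertex~$u$ has at most~$h$ non-neighbors, so there are at most $hN$ such ordered pairs. Every unordered pair $\{u, v\}$ of non-neighbors gives exactly two ordered pairs, $(u, v)$ and $(v, u)$. Dividing by two, there are at most $hN/2$ pairs of non-neighbors.

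For the out-degree, we sum the out-degrees. This sum equals $|E(D)|$. Each unordered pair of adjacent vertices contributes at least one arc. The number of adjacent pairs is $\binom{N}{2}$ minus the number of non-neighbor pairs, which by the first part is at least $N(N-1)/2 - hN/2 = N(N-1-h)/2$. So $\sum_{u} \deg^+(u) \geq N(N-1-h)/2$, and by averaging some vertex has out-degree at least $(N-1-h)/2$.

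If $N - 1 - h < 0$, the bound holds trivially because out-degrees are non-negative. There is no real obstacle here; the only care needed is to count each adjacent pair once even when it carries arcs in both directions. Such pairs only increase $|E(D)|$, so the inequality remains valid.
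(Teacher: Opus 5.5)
Your proof is correct and follows essentially the same approach as the paper. You count non-neighbor pairs via the per-vertex bound of $h$, then lower-bound the number of arcs by $\binom{N}{2} - hN/2 = N(N-1-h)/2$ and average the out-degrees, exactly as the paper does.
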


\begin{proof}
  Each vertex belongs to at most~$h$ pairs of non-neighbors, which gives the first bound.
  Thus the digraph has at least $\binom{N}{2} - hN/2 = N(N - 1 - h)/2$ arcs, and the out-degrees of the~$N$ vertices sum to the number of arcs, so one of them is at least $(N - 1 - h)/2$, and the result follows.
\end{proof}

A \emph{walk} from~$p_0$ to~$p_r$ is a sequence~$(p_0, \ldots, p_r)$ of vertices with $(p_{i-1}, p_i) \in E(D)$ for every $i \in [r]$, and a \emph{path} if its vertices are distinct.
Its \emph{length} is~$r$, its \emph{arcs} are $(p_{i-1}, p_i)$ for $i \in [r]$, and $p_1, \ldots, p_{r-1}$ are its \emph{internal vertices}.
Paths are \emph{arc-disjoint} if no arc belongs to two of them, and \emph{internally disjoint} if they are pairwise distinct and no vertex is an internal vertex of two of them.

\begin{definition}[Detours and completions]\label{def:detours-and-completions}
  Let~$D$ be a digraph.
  For distinct vertices~$u$ and~$v$ of~$D$, a \emph{detour} from~$u$ to~$v$ is a path of length two from~$u$ to~$v$, and $\detours{D}{u}{v}$ is the set~$N^+(u) \cap N^-(v)$.
  A \emph{completion} of~$D$ is a semicomplete digraph~$S$ with $V(S) = V(D)$ and $E(D) \subseteq E(S)$ such that every arc of~$S$ that is not an arc of~$D$ joins two non-neighbors of~$D$.
  The \emph{detour completion}~$D^\star$ of~$D$ is the digraph obtained from~$D$ by adding the arc~$(u, v)$ for every non-arc~$(u, v)$ of~$D$ with $|\detours{D}{u}{v}| \geq |\detours{D}{v}{u}|$.
\end{definition}

Since~$D^\star$ has the arc~$(u, v)$ or $(v, u)$ for every non-arc~$(u, v)$ of~$D$, it is a completion of~$D$, and it has both arcs between non-neighbors~$u$ and~$v$ of~$D$ exactly when $|\detours{D}{u}{v}| = |\detours{D}{v}{u}|$.
\zcref{fig:detour-completion} shows an added arc.
We use the detour completion in \zcref{sec:large-dpw-to-triples} and, in \zcref{sec:anti-triples}, the completion that adds both arcs between every two non-neighbors.

\begin{figure}[ht]
  \centering
  \begin{tikzpicture}
  \node[blackvertex, label={[xshift=-.1cm]180:{$u$}}] (u) at (0, 0) {};
  \node[blackvertex, label={[xshift=.1cm]0:{$v$}}] (v) at (5, 0) {};
  \foreach \i/\x in {1/1.5, 2/2.5, 3/3.5} {
      \node[blackvertex] (w\i) at (\x, 1.4) {};
      \draw[arrow] (u) -- (w\i);
      \draw[arrow] (w\i) -- (v);
    }
  \node[blackvertex] (w4) at (2.5, -1.4) {};
  \draw[arrow] (v) -- (w4);
  \draw[arrow] (w4) -- (u);
  \draw[arrow, dashed] (u) -- (v);
  \node[rectangle, draw, inner sep=5pt, fit=(w1)(w3), label=90:{$\detours{D}{u}{v}$}] {};
  \node[rectangle, draw, inner sep=5pt, fit=(w4), label=-90:{$\detours{D}{v}{u}$}] {};
\end{tikzpicture}%
%
  \caption{Two non-neighbors~$u$ and~$v$ of a digraph~$D$ with $|\detours{D}{u}{v}| = 3$ and \mbox{$|\detours{D}{v}{u}| = 1$}. The detour completion of~$D$ adds the arc~$(u, v)$, drawn dashed.}
  \label{fig:detour-completion}
\end{figure}

A \emph{separation} of~$D$ is a pair~$(A, B)$ of subsets of $V(D)$ with $A \cup B = V(D)$ and no arc from $A \setminus B$ to $B \setminus A$.
Its \emph{separator} is $A \cap B$ and its \emph{order} is $|A \cap B|$.

An undirected graph~$G$ is \emph{bipartite} with \emph{bipartition}~$(X, Y)$ if~$X$ and~$Y$ partition~$V(G)$ and every edge joins~$X$ to~$Y$.
A \emph{matching} of~$G$ is a set of pairwise disjoint edges, $\nu(G)$ is the maximum size of a matching of~$G$, and a \emph{vertex cover} of~$G$ is a set of vertices meeting every edge.

\begin{theorem}[Kőnig's Theorem~\cite{Konig31}]\label{thm:konig}
  Every bipartite graph~$G$ has a vertex cover of size~$\nu(G)$.
\end{theorem}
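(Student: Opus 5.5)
We prove Kőnig's Theorem by the classical alternating-path argument. Let $G$ be bipartite with bipartition $(X, Y)$, and fix a maximum matching~$M$ of~$G$, so $|M| = \nu(G)$. Every vertex cover meets each edge of~$M$, and these edges are pairwise disjoint, so every vertex cover has size at least $|M|$. It therefore suffices to build a vertex cover~$K$ with $|K| \leq |M|$. Call a path \emph{$M$-alternating} if its edges alternate between edges outside~$M$ and edges of~$M$.

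Let $U \subseteq X$ be the set of vertices of~$X$ not covered by~$M$. Let~$Z$ be the set of vertices reachable from~$U$ by an $M$-alternating path that starts at a vertex of~$U$; the trivial paths put~$U$ itself in~$Z$. Such a path starting in~$X$ enters~$Y$ along a non-matching edge and returns to~$X$ along a matching edge. We set
\[
  K = (X \setminus Z) \cup (Y \cap Z).
\]

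\textbf{$K$ is a vertex cover.} Take any edge $xy$ with $x \in X$ and $y \in Y$. If $x \notin Z$, then $x \in K$. Otherwise $x \in Z$, and we show $y \in Z$.
\begin{itemize}
  \item If $xy \notin M$, extend the alternating path reaching~$x$ by the edge~$xy$. This path ends in~$X$, so its last edge is a matching edge or the path is trivial; hence adding the non-matching edge~$xy$ keeps it alternating.
  \item If $xy \in M$, then $x \notin U$, so~$x$ was reached through its matching partner, which is~$y$. Thus $y \in Z$ already.
\end{itemize}
In both cases $y \in Y \cap Z \subseteq K$.

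\textbf{$|K| \leq |M|$.} We show that every vertex of~$K$ is covered by~$M$, and that no edge of~$M$ has both ends in~$K$. Then the map sending each vertex of~$K$ to its matching edge is injective, which gives $|K| \leq |M|$.
\begin{itemize}
  \item A vertex of $X \setminus Z$ is not in~$U$, so it is covered by~$M$.
  \item Suppose some $y \in Y \cap Z$ were uncovered. Then the alternating path from~$U$ to~$y$ starts and ends at uncovered vertices, so it is an augmenting path. Swapping the matching and non-matching edges along it gives a larger matching, contradicting the maximality of~$M$. So every vertex of $Y \cap Z$ is covered.
  \item For $y \in Y \cap Z$, its matching partner~$x$ lies in~$Z$, because the alternating path to~$y$ extends by the matching edge~$yx$. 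Hence $x \notin X \setminus Z$, and the edge~$yx$ has only the end~$y$ in~$K$.
  \item For an edge $xy \in M$ with $x \in X \setminus Z$, we must have $y \notin Z$, since otherwise the previous point would give $x \in Z$.
\end{itemize}
So each edge of~$M$ has exactly one end in~$K$, and therefore $|K| = |M| = \nu(G)$.

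The main obstacle is the counting step. It needs two facts: that every vertex of $Y \cap Z$ is matched, which is exactly where the maximality of~$M$ enters through augmenting paths, and that matching partners of vertices in $Y \cap Z$ stay inside~$Z$.
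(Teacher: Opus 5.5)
Your proof is correct; it is the standard alternating-path argument. The paper gives no proof of this statement. It cites K{\H{o}}nig's theorem and, separately, the augmenting-path procedure of Bondy and Murty, which in $\Ocal(k \cdot n^2)$ time returns a matching of size~$k$ or a vertex cover of size at most $k-1$. The proof of \zcref{lem:detour-pairs} uses both.

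Compared with a bare citation, your argument has the advantage of being constructive. It uses the maximality of~$M$ in only one place: to exclude an augmenting path from~$U$ to an unmatched vertex of $Y \cap Z$. So it works verbatim for any matching~$M$ for which the alternating search from~$U$ finds no augmenting path. That is exactly the state in which the augmenting-path procedure halts below size~$k$. The set~$Z$ is what that search explores, and $K = (X \setminus Z) \cup (Y \cap Z)$ is then a vertex cover of size $|M| \leq k-1$. Your proof therefore also justifies the algorithmic remark the paper relies on, not just the existential statement.

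One minor point: when you extend an alternating path by the edge~$xy$ (or by the matching edge~$yx$), the added vertex may already lie on the path, in which case the extension is not a path. In that case the vertex is in~$Z$ anyway, via the prefix of the path ending at it, so your conclusions are unaffected.
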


Given a bipartite graph~$G$ on~$n$ vertices and a positive integer~$k$, enlarging a matching~$k$ times along augmenting paths~\cite[Chapter 16]{BondyM08} finds in $\Ocal(k \cdot n^2)$ time a matching of size~$k$ or a vertex cover of size at most $k - 1$.

\subparagraph*{Path decompositions.} We now formally define directed pathwidth and path decompositions.

\begin{definition}[Directed pathwidth]\label{def:pathwidth}
  A \emph{path decomposition} of a digraph~$D$ is a sequence~$\mathcal{X} = (X_1, \ldots, X_r)$ of subsets of $V(D)$, called \emph{bags}, such that
  \begin{outline}[romanenumerate]
    \1 $\bigcup_{i \in [r]} X_i = V(D)$,
    \1 for every arc~$(u, v) \in E(D)$, we have $u, v \in X_i$ for some $i \in [r]$, or $u \in X_i$ and $v \in X_j$ for some $i > j$, and\label{def:pathwidth-item-2}
    \1 for every vertex~$v \in V(D)$, the set~$\{i \in [r] \mid v \in X_i\}$ is an interval of integers.
  \end{outline}
  The \emph{width} of~$\mathcal{X}$ is $\max_{i \in [r]} |X_i| - 1$, and the \emph{directed pathwidth}~$\dpw(D)$ of~$D$ is the minimum width of a path decomposition of~$D$.
  The path decomposition~$\mathcal{X}$ is \emph{nice} if $X_1 = X_r = \emptyset$ and, for every $i \in [r-1]$, either $X_{i+1} = X_i \cup \{v\}$ for some $v \notin X_i$, or $X_{i+1} = X_i \setminus \{v\}$ for some $v \in X_i$.
  In the first case~$X_{i+1}$ \emph{introduces}~$v$, and in the second it \emph{forgets}~$v$.
\end{definition}

\begin{observation}\label{obs:dpw-subgraph}
  Let~$D'$ be a subgraph of a digraph~$D$.
  Then $\dpw(D') \leq \dpw(D)$, and if $V(D') = V(D)$, then every path decomposition of~$D$ is one of~$D'$.
\end{observation}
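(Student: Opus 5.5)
The first claim is the usual monotonicity under taking subgraphs, and the second is its spanning special case. I would prove the second claim first and then derive the first from it.

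\textbf{Spanning case.} Suppose $V(D') = V(D)$ and let $\mathcal{X} = (X_1, \ldots, X_r)$ be a path decomposition of~$D$. I check the three conditions of \zcref{def:pathwidth} for~$D'$.
\begin{itemize}
  \item Condition~(i) holds because the bags still cover $V(D) = V(D')$.
  \item Condition~(iii) refers only to the bags and the vertices, so it is unchanged.
  \item For condition~\ref{def:pathwidth-item-2}, every arc of~$D'$ is an arc of~$D$, since $E(D') \subseteq E(D)$. So the requirement for that arc already holds in~$\mathcal{X}$.
\end{itemize}
Hence $\mathcal{X}$ is a path decomposition of~$D'$, and its width is the same.

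\textbf{General case.} Now let~$D'$ be any subgraph of~$D$, and take a path decomposition $(X_1, \ldots, X_r)$ of~$D$ of width $\dpw(D)$. Restrict it to $(X_1 \cap V(D'), \ldots, X_r \cap V(D'))$ and check the conditions again.
\begin{itemize}
  \item The restricted bags cover $V(D')$, since the original bags cover $V(D) \supseteq V(D')$.
  \item For each $v \in V(D')$, the indices of the restricted bags containing~$v$ are exactly the indices of the original bags containing~$v$. So they still form an interval.
  \item Each arc $(u,v)$ of~$D'$ is an arc of~$D$ with $u, v \in V(D')$. So the indices $i, j$ witnessing condition~\ref{def:pathwidth-item-2} in~$D$ still work, because intersecting with $V(D')$ keeps both~$u$ and~$v$.
\end{itemize}
No bag grows under the restriction, so the width is at most $\dpw(D)$. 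This gives $\dpw(D') \le \dpw(D)$.

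There is no real obstacle here. The only point needing care is the interval condition after restriction, and it is immediate because membership of a vertex of~$V(D')$ in a bag is unchanged.
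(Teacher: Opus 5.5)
Your proposal is correct and follows the paper's argument: restrict every bag to $V(D')$, note that every arc of~$D'$ is an arc of~$D$ so condition~\ref{def:pathwidth-item-2} survives and no bag grows, and observe that when $V(D') = V(D)$ the restriction changes no bag. Your split into the spanning case first and the general case second is only a reordering of the same check.
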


\begin{proof}
  Restricting every bag of a path decomposition of~$D$ to $V(D')$ gives a path decomposition of~$D'$ of at most the same width, since every arc of~$D'$ is an arc of~$D$.
  If $V(D') = V(D)$, the restriction changes no bag, and the result follows.
\end{proof}

Thus an obstruction in a subgraph of~$D$ bounds $\dpw(D)$ from below, and every path decomposition of a completion of~$D$ is one of~$D$.

A path decomposition of~$D$ with~$r$ bags and width~$k$ becomes nice, with the same width, by removing repeated bags and then inserting bags~\cite[Section 2.4.2]{FominP19}, in $\Ocal((k + 1) \cdot (n + r))$ time, where $n = |V(D)|$.
An acyclic digraph~$D$ has $\dpw(D) = 0$, since $(\{v_n\}, \ldots, \{v_1\})$ is a path decomposition of~$D$ for the ordering in the definition of acyclic digraphs.
Path decompositions give separations of~$D$ as follows, and all our lower bounds on directed pathwidth use them.

\begin{proposition}[{\textcite[Lemma 2.12 and Section 2.4.2]{FominP19}}]
  \label{prop:separations-from-nice-path-decomposition}
  Let $\mathcal{X} = (X_1, \ldots, X_r)$ be a path decomposition of width at most~$k$ of a digraph~$D$ on~$n$ vertices.
  For $i \in \{0, \ldots, r\}$, let $A_i = X_1 \cup \cdots \cup X_i$ and $B_i = X_{i+1} \cup \cdots \cup X_r$.
  Then $(A_i, B_i)$ is a separation of~$D$ for every $i \in \{0, \ldots, r\}$.
  If~$\mathcal{X}$ is nice, then for every $j \in \{0, \ldots, n\}$ there is $i \in \{0, \ldots, r\}$ with $|A_i| = j$ and $|A_i \cap B_i| \leq k$.
\end{proposition}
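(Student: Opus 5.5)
The first claim, that each $(A_i, B_i)$ is a separation of~$D$, follows directly from the axioms of a path decomposition. Fix $i \in \{0, \ldots, r\}$. Since every vertex lies in some bag, $A_i \cup B_i = V(D)$. Suppose there is an arc $(u,v)$ with $u \in A_i \setminus B_i$ and $v \in B_i \setminus A_i$. Then every bag containing~$u$ has index at most~$i$, and every bag containing~$v$ has index greater than~$i$. So no bag contains both, and each bag containing~$u$ has a smaller index than each bag containing~$v$. This contradicts condition~\ref{def:pathwidth-item-2}, which requires $u$ and $v$ to share a bag or $u$ to appear in a bag of larger index than some bag of~$v$. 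Hence there is no arc from $A_i \setminus B_i$ to $B_i \setminus A_i$.

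For the second claim, assume $\mathcal{X}$ is nice. Then $|A_0| = 0$ and $|A_r| = n$. Moreover, $A_{i+1} \setminus A_i \subseteq X_{i+1} \setminus X_i$, which has at most one vertex, since a forget step adds nothing new to $A$. So $|A_i|$ is non-decreasing and increases by at most one per step. Thus for each $j$ the set $I_j = \{i : |A_i| = j\}$ is a non-empty interval.

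Let $i$ be the largest element of~$I_j$. I claim $A_i \cap B_i \subseteq X_i$. A vertex $w \in A_i \cap B_i$ lies in some bag of index at most~$i$ and some bag of index greater than~$i$. By the interval property it lies in $X_i$, provided $i \geq 1$; when $i = 0$, $A_0 = \emptyset$ and the intersection is empty. Therefore $|A_i \cap B_i| \leq |X_i| \leq k+1$.

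To improve this to~$k$, I use the choice of~$i$. If $i < r$, then $|A_{i+1}| = j+1$, so $X_{i+1}$ introduces a vertex $v \notin A_i$. By the interval property, $v$ appears in no bag of index at most~$i$, so $X_{i+1} = X_i \cup \{v\}$ has size at most $k+1$, giving $|X_i| \leq k$. If $i = r$, then $B_r = \emptyset$ and the intersection is empty. Hence $|A_i \cap B_i| \leq k$.

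The main subtlety is this off-by-one: choosing the last index with $|A_i| = j$ ensures the next bag is an introduction, which forces $|X_i| \leq k$ rather than $k+1$.
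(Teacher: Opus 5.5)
Your proof is correct and follows the standard argument behind the cited result (the paper gives no proof of its own, but uses the same reasoning in \zcref{obs:covering-non-arcs}: $A_i \cap B_i \subseteq X_i$ by the interval property, with equality when $X_{i+1}$ introduces a vertex). As a minor remark, choosing the last index is not actually needed: for $1 \le i \le r-1$ you have $A_i \cap B_i = X_i \cap X_{i+1}$, and consecutive bags of a nice decomposition differ by exactly one vertex, so this set has at most $k$ vertices for every $i$.
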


In a nice path decomposition of a semicomplete digraph, every vertex not yet introduced has an arc to every vertex already forgotten~\cite[Section 2.4.2]{FominP19}, and path decompositions put no condition on the non-arcs.
We introduce the following notion to recover this property in \zcref{sec:adp-algorithm,sec:cliquewidth-and-pathwidth}.

\begin{definition}[Covering the non-arcs]\label{def:covering-non-arcs}
  A path decomposition~$\mathcal{X}$ of a digraph~$D$ \emph{covers the non-arcs} of~$D$ if, for every non-arc~$(u, v)$ of~$D$, some bag of~$\mathcal{X}$ contains both~$u$ and~$v$.
\end{definition}

Making a path decomposition nice as above only removes bags and inserts subsets of its bags, so it preserves the covering of the non-arcs.
The proof of the following observation is the argument for semicomplete digraphs~\cite[Section 2.4.2]{FominP19}, with the covering of the non-arcs in place of semicompleteness.

\begin{observation}\label{obs:covering-non-arcs}
  Let $\mathcal{X} = (X_1, \ldots, X_r)$ be a path decomposition of a digraph~$D$ that covers the non-arcs of~$D$, and~$A_i$ and~$B_i$ be as in \zcref{prop:separations-from-nice-path-decomposition}.
  Then $B_i \setminus A_i$ is complete to $A_i \setminus B_i$ for every $i \in \{0, \ldots, r\}$.
  If~$\mathcal{X}$ is nice and~$X_{i+1}$ introduces a vertex~$v$, then $v \in B_i \setminus A_i$, the set~$A_i \setminus B_i = A_i \setminus X_i$ consists of the vertices forgotten by $X_1, \ldots, X_i$, and~$v$ is complete to it.
\end{observation}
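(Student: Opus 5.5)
The plan is to follow the argument for semicomplete digraphs, with the covering of the non-arcs taking over the role of semicompleteness.

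For the first claim, fix $i \in \{0, \ldots, r\}$ and take $u \in B_i \setminus A_i$ and $w \in A_i \setminus B_i$. I need to show that $(u, w) \in E(D)$. The three facts to establish are the following.

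First, $u$ and $w$ share no bag. By definition, $u$ lies only in bags $X_j$ with $j > i$, while $w$ lies only in bags with $j \le i$.

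Second, $u$ and $w$ are adjacent. If they were non-neighbors, then $(u, w)$ would be a non-arc of $D$. Since $\mathcal{X}$ covers the non-arcs, some bag would contain both, contradicting the first fact.

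Third, $(w, u) \notin E(D)$. By \zcref{prop:separations-from-nice-path-decomposition}, $(A_i, B_i)$ is a separation, so there is no arc from $A_i \setminus B_i$ to $B_i \setminus A_i$. One can also see this directly from condition~\ref{def:pathwidth-item-2}: an arc $(w, u)$ would require either a common bag or $w \in X_j$ and $u \in X_{j'}$ with $j > j'$, and both are impossible by the first fact.

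Together these give $(u, w) \in E(D)$, which proves that $B_i \setminus A_i$ is complete to $A_i \setminus B_i$.

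For the second claim, suppose $\mathcal{X}$ is nice and $X_{i+1} = X_i \cup \{v\}$ with $v \notin X_i$. I first show that $v \notin A_i$. If $v \in X_j$ for some $j \le i$, then the interval property, together with $v \in X_{i+1}$, would force $v \in X_i$, which is false. Since $v \in X_{i+1} \subseteq B_i$, it follows that $v \in B_i \setminus A_i$.

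Next, I identify $A_i \setminus B_i$ with $A_i \setminus X_i$. Every $x \in X_i$ other than $v$ lies in $X_{i+1} \subseteq B_i$, so $X_i \subseteq A_i \cap B_i$. Conversely, if $x \in A_i \cap B_i$, then $x$ appears in some bag at position at most $i$ and some bag at position at least $i+1$. By the interval property $x \in X_i$, since $X_i$ lies between these positions (or is one of them). Hence $A_i \cap B_i = X_i$, and therefore $A_i \setminus B_i = A_i \setminus X_i$.

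These are exactly the vertices that appeared in some bag among $X_1, \ldots, X_i$ but are no longer in $X_i$. By niceness, each such vertex was removed by some forget step up to $X_i$, so it is a vertex forgotten by $X_1, \ldots, X_i$.

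Applying the first claim to this $i$ then shows that $v$ is complete to this set.

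No step is a real obstacle. The only care needed is the interval-property bookkeeping that shows $A_i \cap B_i = X_i$.
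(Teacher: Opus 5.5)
Your proof is correct and follows the paper's argument step by step. No bag contains both $u$ and $w$, so the covering of the non-arcs makes them adjacent, and the separation $(A_i, B_i)$ excludes the arc $(w, u)$. In the nice case, the interval property gives $v \notin A_i$ and $A_i \cap B_i = X_i$; you simply write out this interval bookkeeping in more detail than the paper does.
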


\begin{proof}
  Let $v \in B_i \setminus A_i$ and $w \in A_i \setminus B_i$.
  Then~$w$ belongs only to bags~$X_j$ with $j \leq i$, and~$v$ only to bags with $j > i$, so no bag contains both.
  Since~$\mathcal{X}$ covers the non-arcs of~$D$, the vertices~$v$ and~$w$ are adjacent.
  As $(A_i, B_i)$ is a separation of~$D$ by \zcref{prop:separations-from-nice-path-decomposition}, there is no arc from~$w$ to~$v$, and we conclude that $(v, w) \in E(D)$.
  If~$\mathcal{X}$ is nice and~$X_{i+1}$ introduces~$v$, then~$v$ is in no bag before~$X_{i+1}$, so $v \in B_i \setminus A_i$, and $A_i \cap B_i = X_i$ since $X_i \subseteq X_{i+1}$ and the bags containing a vertex form an interval.
  Thus $A_i \setminus B_i = A_i \setminus X_i$ consists of the vertices forgotten by $X_1, \ldots, X_i$.
  By the first statement,~$v$ is complete to it, and the result follows.
\end{proof}

\begin{definition}[$k$-triple {\cite[Section 1]{FradkinS13}}]\label{def:k-triple}
  Let~$D$ be a digraph and~$k$ be a positive integer.
  A \emph{$k$-triple} of~$D$ is a triple~$(A, B, C)$ of pairwise disjoint subsets of $V(D)$ of size~$k$ such that~$A$ is complete to~$B$,~$B$ is complete to~$C$, and~$C$ is matched to~$A$.
\end{definition}

See \zcref{fig:k-triple-example} for a $4$-triple.

\begin{figure}[ht]
  \centering
  \begin{tikzpicture}[yscale=.6]
  \foreach \i in {1,...,4} {
      \node[blackvertex, label={[yshift=-.1cm]-90:{$a_\i$}}] (a\i) at (\i-1,-.5) {};
      \node[blackvertex, label={[xshift=-.1cm]180:{$b_\i$}}] (b\i) at (-2,\i-1) {};
      \node[blackvertex, label={[yshift=.1cm]90:{$c_\i$}}] (c\i) at ($(a\i) + (0,4.5)$) {};
      \draw[arrow] (c\i) -- (a\i);
    }
  \foreach \i in {1,...,4}
  \foreach \j in {1,...,4} {
      \draw[arrow,opacity=.5] (a\i) -- (b\j);
      \draw[arrow,opacity=.5] (b\i) -- (c\j);
    }
  \node[rectangle,draw,fit=(a1)(a4), label=0:{$A$}] {};
  \node[rectangle,draw,fit=(b1)(b4), label=90:{$B$}] {};
  \node[rectangle,draw,fit=(c1)(c4), label=0:{$C$}] {};
\end{tikzpicture}%
%
  \caption{A $4$-triple~$(A, B, C)$. The definition puts no condition on the arcs inside~$A$,~$B$, and~$C$ or on the other arcs between~$C$ and~$A$.}
  \label{fig:k-triple-example}
\end{figure}

\subparagraph*{Parameterized complexity.}
A \emph{parameterized problem} is a set of pairs~$(x, k)$, called \emph{instances}, of a string~$x$ and a non-negative integer~$k$, the \emph{parameter}.
It is \emph{fixed-parameter tractable} (\FPT for short) if some algorithm decides it in $f(k) \cdot {|x|}^{\Ocal(1)}$ time, and it is in the class~\XP if some algorithm decides it in $f(k) \cdot {|x|}^{g(k)}$ time, for computable functions~$f$ and~$g$.
We refer to the book of \textcite{CyganFKLMPPS15} for background on parameterized complexity.
For a non-negative integer~$i$, we write $\tower_i(x)$ for the tower of~$i$ twos with~$x$ on top, so $\tower_0(x) = x$ and $\tower_{i+1}(x) = 2^{\tower_i(x)}$.

\section{A polynomial-time approximation algorithm for directed pathwidth in \titlemath{h}-semicomplete digraphs}
\label{sec:polytime-app-algorithm}

In this section, we extend the polynomial-time approximation algorithm of \textcite[Theorem 4.12]{FominP19} for directed pathwidth from semicomplete to \hsemi{h} digraphs.
Their algorithm outputs, for a parameter~$\ell \geq 4k$, a path decomposition of width at most $\ell + 2k$, a degree tangle, or a matching tangle.
We show that tangles remain obstructions to directed pathwidth at most~$k$ in \hsemi{h} digraphs, with a slack of~$h$ in their out-degrees (\zcref{thm:tangles-or-path-decomposition}).
We first define tangles~\mbox{\cite[Definitions~3.7 and 3.10]{FominP19}}.

\begin{definition}[$(s, d)$-degree tangles]
\label{def:degree-tangles}
  Let~$D$ be a digraph and $s, d$ be positive integers.
  An \emph{$(s, d)$-degree tangle} of~$D$ is a set~$U \subseteq V(D)$ such that $|U| \geq s$ and $|\deg^+(v) - \deg^+(w)| \leq d$ for all $v, w \in U$.
\end{definition}

\begin{definition}[$(s, d)$-matching tangles]
\label{def:matching-tangles}
  Let~$D$ be a digraph and $s, d$ be positive integers.
  An \emph{$(s, d)$-matching tangle} of~$D$ is a pair~$(U, W)$ of disjoint subsets of $V(D)$ of size~$s$ such that~$U$ is matched to~$W$ and $\deg^+(w) > \deg^+(u) + d$ for all $u \in U$ and $w \in W$.
\end{definition}

\zcref{prop:fp-approximation} is adapted from the approximation algorithm for semicomplete digraphs~\cite[Theorem 4.12]{FominP19}.
Although the original statement is restricted to semicomplete digraphs, we remark that all the tools that its algorithm applies hold for general digraphs.
The proof orders the vertices by out-degree, finds the tangles or small vertex covers of bipartite graphs formed by arcs, and turns a monotone sequence of separations into a path decomposition, using their results stated for general digraphs and bipartite graphs.
A monotone sequence of separations yields a path decomposition with bounded width~\cite[Lemma 2.12]{FominP19}, and the vertices of a bipartite graph~$G$ covering an edge of every maximum matching form a vertex cover with at most $\nu(G)$ vertices on each side of~$G$ that changes monotonically when vertices are added to one side of~$G$ and removed from the other side~\cite[Lemmas 4.2 and 4.4]{FominP19}.
Semicompleteness is used in the original proof~\cite[Theorem 4.12]{FominP19} only to guarantee its last conclusion, which depends on degree and matching tangles being certificates of large directed pathwidth of semicomplete digraphs.
In~\zcref{prop:fp-approximation} we drop this last conclusion from the statement and write it for general digraphs.

\begin{proposition}[Adapted from {\textcite[Theorem 4.12]{FominP19}}]
\label{prop:fp-approximation}
  Let~$D$ be a digraph on~$n$ vertices, and~$k$ and $\ell \geq 4k$ be positive integers.
  There is an algorithm with run time~$\Ocal(k \cdot n^2)$ that outputs an $(\ell + 2, k)$-degree tangle of~$D$, a $(k + 1, k)$-matching tangle of~$D$, or a path decomposition of~$D$ of width at most $\ell + 2k$.
\end{proposition}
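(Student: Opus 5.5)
We follow the proof of \textcite[Theorem 4.12]{FominP19} essentially verbatim and check that none of its steps uses semicompleteness. The plan has four parts: sort the vertices by out-degree, build a sliding window, certify or cover each window with a bipartite matching, and glue the resulting separations into a path decomposition.

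\textbf{Sorting and windows.} First we compute all out-degrees in $\Ocal(n^2)$ time from the adjacency matrix and sort the vertices as $v_1, \ldots, v_n$ by non-decreasing out-degree. If some $\ell + 2$ vertices that are consecutive in this order have out-degrees within~$k$ of one another, they form an $(\ell+2, k)$-degree tangle and we output it. This needs only the definition, so it holds for arbitrary digraphs. Otherwise, every window of $\ell + 2$ consecutive vertices has an out-degree spread larger than~$k$.

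\textbf{Bipartite graphs and the matching tangle.} For each prefix position~$i$ we define a bipartite graph~$G_i$, as in the original proof, between a set of low-degree vertices near the boundary of the prefix and a set of high-degree vertices beyond the next block. Its edges are the arcs of~$D$ going from the low side to the high side, which are the arcs that would violate the separation. If $\nu(G_i) \geq k + 1$, then a matching of size $k+1$ gives sets~$U$ and~$W$ with~$U$ matched to~$W$. The window structure forces a degree gap larger than~$k$ between every vertex of~$W$ and every vertex of~$U$, so $(U, W)$ is a $(k+1, k)$-matching tangle. This step again uses only degrees and arcs of~$D$, never the existence of arcs between arbitrary pairs.

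\textbf{Gluing into a decomposition.} If every~$G_i$ has $\nu(G_i) \leq k$, then \cite[Lemmas 4.2 and 4.4]{FominP19} give vertex covers with at most~$k$ vertices on each side of~$G_i$. These covers change monotonically as the window slides, since one side gains vertices while the other loses them. Adding these covers to the windows yields a monotone sequence of separations of~$D$ whose separators have size at most $\ell + 2k + 1$. By \cite[Lemma 2.12]{FominP19}, such a sequence gives a path decomposition of width at most $\ell + 2k$. The condition $\ell \geq 4k$ is used exactly as in the original, to keep the windows large relative to the cover sizes, and this is purely arithmetic.

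\textbf{Running time and the main obstacle.} The matchings are maintained incrementally with $\Ocal(k)$ augmentations in total per window shift, which gives the $\Ocal(k \cdot n^2)$ bound as in the original. The main obstacle is verifying that semicompleteness enters the original proof only through the final claim that tangles certify $\dpw(D) > k$. That claim is precisely what we drop from the statement. The verification therefore amounts to auditing the cited lemmas, whose statements are already for general digraphs and bipartite graphs. The remaining point to check is that the separation property is certified solely by the vertex covers, that is, by arcs present in~$D$, and not by an argument that counts non-in-neighbors as out-neighbors.
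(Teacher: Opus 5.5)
Your proposal is correct and takes the same route as the paper. The paper also just audits the proof of Theorem~4.12 of Fomin and Pilipczuk: out-degree ordering, bipartite graphs of arcs from the prefix to the suffix with the monotone vertex covers of their Lemmas~4.2 and~4.4, and their Lemma~2.12 for the monotone separations. It then observes that semicompleteness enters only through the dropped conclusion that the tangles certify $\dpw(D) > k$.

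There are two harmless slips. First, the sides of~$G_i$ must be the whole prefix $\{v_1, \ldots, v_i\}$ and the whole suffix after the window, not only vertices near the boundary, so that the cover meets every arc that would violate the separation. Second, the hypothesis $\ell \geq 4k$ is not used by the algorithm itself; it only serves to make an $(\ell + 2, k)$-degree tangle a certificate.
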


In~\zcref{lem:degree-tangles-large-dpw} and \zcref{lem:matching-tangles-large-dpw}, we show that both tangles remain certificates for large directed pathwidth of \hsemi{h} digraphs.
We apply~\zcref{prop:fp-approximation} with $k + h$ in place of~$k$ to account for $h$-semicompleteness, and we use the two aforementioned lemmas to recover the original conclusion, which proves the following:

\begin{theorem}[store=tangles-or-path-decomposition]
\label{thm:tangles-or-path-decomposition}
  Let~$D$ be an $n$-vertex \hsemi{h} digraph, and let $k,\ell \geq 4(k + h)$ be positive integers.
  There is an algorithm that, in time~$\Ocal((k + h) \cdot n^2)$, outputs one of the following:
  \begin{outline}[enumerate]
    \1 an $(\ell + 2, k + h)$-degree tangle of~$D$,
    \1 a $(k + 1, k + h)$-matching tangle of~$D$, or
    \1 a path decomposition of~$D$ of width at most $\ell + 2(k + h)$.
  \end{outline}
  In the first two cases, $\dpw(D) > k$.
\end{theorem}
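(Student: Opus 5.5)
The algorithmic part is immediate from \zcref{prop:fp-approximation}. We run it with $k + h$ in place of~$k$; this is allowed since $\ell \geq 4(k+h)$. It runs in $\Ocal((k+h) \cdot n^2)$ time and outputs one of three things: an $(\ell+2, k+h)$-degree tangle, a $(k+h+1, k+h)$-matching tangle, or a path decomposition of width at most $\ell + 2(k+h)$. From a $(k+h+1,k+h)$-matching tangle we keep any $k+1$ of its matched pairs, which gives a $(k+1,k+h)$-matching tangle. Everything therefore reduces to proving the two certificate statements: an \hsemi{h} digraph with $\dpw(D) \leq k$ has no $(\ell+2,k+h)$-degree tangle with $\ell \geq 4(k+h)$, and no $(k+1,k+h)$-matching tangle. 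These are the lemmas \zcref{lem:degree-tangles-large-dpw} and \zcref{lem:matching-tangles-large-dpw}. I would prove them by adapting \cite[Lemmas 3.8 and 3.11]{FominP19}, with a slack of~$h$.

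The common tool is a degree comparison across a separation. Suppose $(A,B)$ is a separation of order at most~$k$, and let $w \in A\setminus B$ and $v \in B\setminus A$. The vertex~$w$ has no out-neighbor in $B\setminus A$, so $\deg^+(w) \leq |A| - 1$. The vertex~$v$ is adjacent to all but at most~$h$ vertices of $A\setminus B$, and every such adjacency must be an arc from~$v$, so $\deg^+(v) \geq |A| - k - h$. Symmetric bounds give $\deg^+(w) \leq |A|-1$ and $\deg^+(v) \geq |A\setminus B| - h$.

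For the matching tangle $(U,W)$, take a nice path decomposition of width at most~$k$ and use \zcref{prop:separations-from-nice-path-decomposition}. I would pick the separation $(A_i,B_i)$ with $i$ the first index at which some vertex of $U \cup W$ gets forgotten, or use the Fomin–Pilipczuk argument directly. Since $|U| = k+1$ exceeds the separator size, some matched arc $(u,w)$ with $u \in U$, $w\in W$ must respect the separation in a way that places $w \in A\setminus B$ and $u \in B\setminus A$. Alternatively, as in their proof, choose the separation with $|A| = \deg^+(u_{\min})$ or a similar threshold. We then get $\deg^+(w) \leq |A|-1 < |A| - k - h \leq \deg^+(u) + \ldots$, contradicting $\deg^+(w) > \deg^+(u) + k + h$. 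Concretely I would follow their proof line by line. Every time they use ``$v$ is an out-neighbor of every vertex of $A\setminus B$ it is not an in-neighbor of'', I subtract at most~$h$, and this is exactly absorbed by replacing $d=k$ with $d = k+h$.

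For the degree tangle~$U$ of size $\ell+2 \geq 4(k+h)+2$, choose from \zcref{prop:separations-from-nice-path-decomposition} a separation $(A,B)$ of order at most~$k$ with $|A|$ chosen so that both $U\cap(A\setminus B)$ and $U \cap (B\setminus A)$ have at least $\approx 2(k+h)+1-k$ vertices. This works by sliding $j=|A_i|$ and using the interval property. Sum the out-degrees within $U\cap(A\setminus B)$: its $m$ vertices see at most $|A|-1$ out-neighbors each, and by the standard averaging inside this set, some vertex has out-degree at most $|A| - 1 - (m-1)/2$. Meanwhile every $v\in U\cap(B\setminus A)$ has out-degree at least $|A| - k - h$ by the bound above. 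This gives a gap larger than $k+h$ once $m > 2(k+h)$-ish. The main obstacle is getting these constants to close exactly with $|U| = \ell + 2$ and $\ell \geq 4(k+h)$. I would first try to transcribe their counting with $k$ replaced by $k+h$ in the degree bound only, and check whether the $4k+2$ threshold becomes $4(k+h)+2$.
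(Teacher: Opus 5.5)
Your reduction matches the paper's proof: run \zcref{prop:fp-approximation} with $k+h$ in place of $k$, then keep $k+1$ pairs of the $(k+h+1,k+h)$-matching tangle. Everything then rests on the two certificate lemmas, and your argument for degree tangles has a genuine gap. The ``standard averaging'' that gives a vertex of $U\cap(A\setminus B)$ with out-degree at most $|A|-1-(m-1)/2$ only works in tournaments. In a semicomplete digraph the vertices of $U\cap(A\setminus B)$ may span digons, so their out-degrees inside this set need not average $(m-1)/2$. Even in a tournament the constants cannot close. Against the bound $|A|-k-h$ on $U\cap(B\setminus A)$, the gap is $(m+1)/2-(k+h)$, which exceeds $k+h$ only if $m\geq 4(k+h)$. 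Your choice $m\approx k+2h+1$ gives a gap of $1-k/2$. With $|U|=\ell+2$ only just above $4(k+h)$, you cannot guarantee $m\geq 4(k+h)$ when up to $k$ tangle vertices may sit in the separator and at least one must lie in $B\setminus A$.

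The missing idea is to use the width bound inside the tangle instead of averaging, and to make the split one-sided. Take the separation with $|A|=\alpha=\min_{v\in U}\deg^+(v)$, so $U\cap(A\setminus B)=\emptyset$. Then $D'=D[U\cap(B\setminus A)]$ has at least $3k+2h+2$ vertices and directed pathwidth at most $k$. So \zcref{prop:small-in-degree} gives $u\in V(D')$ with at most $k$ in-neighbors in $D'$. Hence $\deg^+(u)\geq(\alpha-k)+(|V(D')|-1-k)-h\geq\alpha+k+h+1>\max_{v\in U}\deg^+(v)$, a contradiction. In-degree at most $k$, rather than about half of $D'$, is what makes $4k+2h+2$ tangle vertices enough.

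Your matching-tangle sketch also fails as written. Neither the separation at the first forgotten vertex of $U\cup W$ nor the threshold $|A|=\deg^+(u_{\min})$ forces a matched pair with $w\in A\setminus B$ and $u\in B\setminus A$. With the latter threshold all of $U\cup W$ lies in $B$. Also, the chain $|A|-1<|A|-k-h$ is false. The working argument, the one in the paper, takes $|A|=\alpha=\min_{w\in W}\deg^+(w)$. Your two degree bounds then give $W\subseteq B$ and $U\subseteq A$. Since the separator has at most $k$ vertices, one of the $k+1$ disjoint matching arcs goes from $A\setminus B$ to $B\setminus A$, contradicting that $(A,B)$ is a separation.
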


With $h = 0$ we recover exactly the result for semicomplete digraphs~\cite[Theorem~4.12]{FominP19}.
The slack of~$h$ in the tangles comes from the following fact.
Given a separation~$(A, B)$ of~$D$, a vertex~$b \in B \setminus A$ has as out-neighbors all \emph{but at most~$h$} vertices of $A \setminus B$, instead of all of them in a semicomplete digraph.
This forces us to ask for a $(k + 1, k + h)$-matching tangle, \emph{stronger} than the $(k + 1, k)$-matching tangle for semicomplete digraphs~\cite{FominP19}, see \zcref{lem:matching-tangles-large-dpw}.
In turn, the algorithm uses the same bound~$k + h$ for the degree tangle, which is thus \emph{weaker} and needs $2h$ more vertices.

\raggedbottom
\begin{corollary}\label{cor:dpw-approximation}
  Let~$D$ be an \hsemi{h} digraph on~$n$ vertices, and let $p = \max\{\dpw(D), 1\}$.
  There is an algorithm with run time~$\Ocal(p \cdot (p + h) \cdot n^2)$ that outputs a path decomposition of~$D$ of width at most $6(p + h)$.
\end{corollary}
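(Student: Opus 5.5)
We plan a standard doubling search over the parameter~$k$, calling \zcref{thm:tangles-or-path-decomposition} with the smallest allowed~$\ell$. For $k = 1, 2, 4, \ldots$ we run the algorithm of \zcref{thm:tangles-or-path-decomposition} on~$D$ with this~$k$ and with $\ell = 4(k + h)$. If it returns a path decomposition, it has width at most $\ell + 2(k+h) = 6(k + h)$, and we stop. Otherwise it returns a tangle, which certifies $\dpw(D) > k$ by the last sentence of the theorem, and we double~$k$.

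For the width guarantee, let~$k^\ast$ be the value at which the search stops. Once $k \geq p \geq \dpw(D)$, no tangle can be returned, since a tangle would give $\dpw(D) > k \geq \dpw(D)$. So the search stops at the latest at the first power of two that is at least~$p$, and hence $k^\ast < 2p$. This gives width at most $6(k^\ast + h) \leq 6(2p + h)$, which is a factor~$2$ too weak in the~$p$ term. The main obstacle is therefore to recover the constant~$6$ exactly. The fix I would try first is to replace the doubling by a linear scan $k = 1, 2, 3, \ldots$. The scan stops at some $k^\ast \leq p$: if $\dpw(D) = 0$ then $p = 1$ and $k = 1$ already succeeds, and otherwise it succeeds no later than $k = \dpw(D) = p$. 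The output width is then at most $6(k^\ast + h) \leq 6(p + h)$.

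For the run time, the linear scan makes at most~$p$ calls, each costing $\Ocal((k + h) \cdot n^2) = \Ocal((p + h) \cdot n^2)$ since $k \leq p$. Summing gives $\Ocal(p \cdot (p + h) \cdot n^2)$, which matches the claimed bound exactly. This is presumably why the statement carries the factor~$p$ rather than $\log p$. We do not need to know~$p$ in advance, since the scan stops on its own. Each call runs on~$D$ itself, so \zcref{thm:tangles-or-path-decomposition} applies verbatim, and no completion of~$D$ is needed.
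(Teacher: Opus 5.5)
Your final linear-scan argument is correct and is essentially the paper's proof: run the algorithm of the theorem with $\ell = 4(k+h)$ for $k = 1, 2, \ldots$. A tangle certifies $\dpw(D) > k$, so the scan stops at some $k \le p$, with width at most $6(k+h) \le 6(p+h)$ and total time $\Ocal(p(p+h)n^2)$; your doubling detour can simply be dropped.
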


\begin{proof}
  Run the algorithm of \zcref{thm:tangles-or-path-decomposition} with $\ell = 4(k + h)$ for $k = 1, 2, \ldots$ until it outputs a path decomposition.
  The tangles are impossible once $k \geq \dpw(D)$, so the algorithm stops for some $k \leq p$ with width at most $6(k + h) \leq 6(p + h)$.
  The $k$-th run takes $\Ocal((k + h) \cdot n^2)$ time, and the result follows.
\end{proof}

For $\dpw(D) \geq 1$, this width is at most $6(h + 1) \cdot \dpw(D)$, a constant-factor approximation for fixed~$h$.

The proofs that the tangles are obstructions use \zcref{prop:separations-from-nice-path-decomposition} and the following result, stated for semicomplete digraphs~\cite[Lemma 2.10]{FominP19}; its proof, which takes the vertex introduced last in a nice path decomposition, works without change in every digraph.

\begin{proposition}[Adapted from {\textcite[Lemma 2.10]{FominP19}}]
\label{prop:small-in-degree}
  Every digraph with at least one vertex and directed pathwidth at most~$k$ has a vertex of in-degree at most~$k$.
\end{proposition}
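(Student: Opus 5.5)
The plan is to take a nice path decomposition $\mathcal{X} = (X_1, \ldots, X_r)$ of width at most~$k$ of the given digraph~$D$ and to look at the vertex~$v$ introduced last. Such a decomposition exists: any path decomposition of width $\dpw(D) \leq k$ can be made nice without increasing its width, as recalled after \zcref{obs:dpw-subgraph}. Since $D$ has at least one vertex and $X_r = \emptyset$, some bag introduces a vertex, so the last introduction is well defined. Let $X_{i+1} = X_i \cup \{v\}$ be this bag. We will show that every in-neighbor of~$v$ lies in~$X_i$; since $|X_{i+1}| \leq k+1$, this gives $|X_i| \leq k$ and hence in-degree at most~$k$.

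First we record that every vertex of~$D$ lies in $A_{i+1} = X_1 \cup \cdots \cup X_{i+1}$. Indeed, by condition~(i) every vertex belongs to some bag. By niceness every vertex is introduced in the bag where it first appears, because $X_1 = \emptyset$. Since~$v$ is the last introduced vertex, every vertex first appears at a bag of index at most $i+1$.

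Now take an in-neighbor~$u$ of~$v$ with $u \neq v$. By condition~\ref{def:pathwidth-item-2}, applied to the arc $(u, v)$, either $u$ and~$v$ share a bag, or $u \in X_a$ and $v \in X_b$ for some $a > b$.
\begin{itemize}
  \item In the second case, $a > b \geq i+1$, because $v$ appears in no bag before $X_{i+1}$. So $u$ appears in a bag after $X_{i+1}$. Since $u \in A_{i+1}$ and its bags form an interval, $u \in X_{i+1}$.
  \item In the first case, $u$ and~$v$ share a bag $X_j$ with $j \geq i+1$. Again, since $u$ was introduced at an index at most $i+1$ and its bags form an interval, $u \in X_{i+1}$.
\end{itemize}
In both cases $u \in X_{i+1} \setminus \{v\} = X_i$, so $N^-(v) \subseteq X_i$ and $\deg^-(v) \leq |X_i| = |X_{i+1}| - 1 \leq k$.

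No step is a real obstacle. The only point needing care is the interval argument showing that in-neighbors appearing later are still present in~$X_{i+1}$. This uses only that the decomposition is nice and that $v$ is introduced last, and no semicompleteness, which is why the argument holds in every digraph.
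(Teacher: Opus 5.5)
Your proof is correct and follows the same argument the paper cites from Fomin and Pilipczuk. You take the vertex~$v$ introduced last in a nice path decomposition of width at most~$k$, then use condition~(ii) and the interval property to show that every in-neighbor of~$v$ lies in the preceding bag~$X_i$, which has at most~$k$ vertices.
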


We are ready to show that degree tangles are obstructions to bounded directed pathwidth.
The proof follows the one for semicomplete digraphs~\cite[Lemma 3.8]{FominP19}.
Our tangle has $2h$ more vertices than the original one:~$h$ compensate the gap~$k + h$ between the out-degrees in the tangle, which is~$k$ in the original lemma, and~$h$ account for the non-neighbors of the vertex whose out-degree we bound.

\begin{lemma}
\label{lem:degree-tangles-large-dpw}
  Let~$D$ be an \hsemi{h} digraph and~$k$ be a positive integer.
  If~$D$ has a $(4k + 2h + 2, k + h)$-degree tangle, then $\dpw(D) > k$.
\end{lemma}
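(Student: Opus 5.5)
We argue by contradiction, following the proof for semicomplete digraphs~\cite[Lemma 3.8]{FominP19}. Suppose $\dpw(D) \leq k$ and let $U$ be a $(4k + 2h + 2, k + h)$-degree tangle. Fix a nice path decomposition $\mathcal{X}$ of width at most~$k$. Let $d$ be the minimum out-degree over~$U$, so every vertex of~$U$ has out-degree in $[d, d + k + h]$.

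\textbf{Choosing a separation.} We pick a separation that splits~$U$ in a balanced way. Take the sequence of separations $(A_i, B_i)$ of \zcref{prop:separations-from-nice-path-decomposition} and choose the first index~$i$ with $|A_i \cap U| \geq 2k + h + 1$. Because each step of a nice decomposition changes a bag by one vertex, we have $|A_i \cap U| = 2k + h + 1$ exactly. The separator $A_i \cap B_i = X_i$ has at most $k + 1$ vertices. Consequently $|(A_i \setminus B_i) \cap U| \geq 2k + h + 1 - (k+1) = k + h$. Also $|(B_i \setminus A_i) \cap U| \geq (4k + 2h + 2) - (2k + h + 1) = 2k + h + 1$. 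The exact offsets may need adjusting by one to make the final inequality strict.

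\textbf{Upper bound on an out-degree in~$A_i \setminus B_i$.} Consider the digraph $D[A_i]$. Its directed pathwidth is at most~$k$ by \zcref{obs:dpw-subgraph}. Apply \zcref{prop:small-in-degree} to the subgraph induced by $(A_i \setminus B_i) \cap U$, or by all of~$A_i$. This gives a vertex $w \in (A_i\setminus B_i)\cap U$ whose in-degree, among the vertices of~$U$ that lie in~$A_i\setminus B_i$, is at most~$k$. No arcs go from $A_i \setminus B_i$ to $B_i \setminus A_i$, so every out-neighbor of~$w$ lies in~$A_i$. A cleaner way to bound this is by counting: the vertices of $A_i \setminus B_i$ can reach only $A_i$, so $\deg^+(w) \leq |A_i| - 1$.

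\textbf{Lower bound on an out-degree in~$B_i \setminus A_i$.} Take any $v \in (B_i \setminus A_i)\cap U$. Every vertex of $A_i \setminus B_i$ is either a non-neighbor of~$v$ or an out-neighbor of~$v$, since the separation forbids arcs from $A_i \setminus B_i$ to~$v$. Hence $\deg^+(v) \geq |A_i \setminus B_i| - h$.

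\textbf{Comparing the two bounds.} This step is where the care lies. We choose~$w$ inside $(A_i\setminus B_i)\cap U$ using out-degrees restricted to that set, as in the original argument. Within the subdigraph $T = D[(A_i\setminus B_i)\cap U]$, which has at least $k + h$ vertices, the arc count from \zcref{obs:out-degree-h-semicomplete} gives a vertex~$w$ with many out-neighbors inside~$T$. Dually, some vertex of~$T$ has few out-neighbors in~$T$. Its total out-degree is then at most $|A_i| - |T| + (\text{few})$. Meanwhile each $v \in (B_i\setminus A_i)\cap U$ gains all but~$h$ of $A_i\setminus B_i$, plus at least half of the $2k + h + 1$ vertices of $U \cap (B_i\setminus A_i)$ for a suitable~$v$ chosen by \zcref{obs:out-degree-h-semicomplete}. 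The goal is to show that this forces $\deg^+(v) - \deg^+(w) > k + h$, which contradicts the tangle condition.

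\textbf{Main obstacle.} I expect the main difficulty to be this bookkeeping. We must choose the split point and pick~$v$ and~$w$ via \zcref{obs:out-degree-h-semicomplete} on the two parts of~$U$ so that the $2h$ extra vertices exactly absorb the two losses of~$h$. One loss is the non-neighbors of~$v$ in $A_i\setminus B_i$. The other is the $h$ extra slack in the degree gap.
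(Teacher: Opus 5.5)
\textbf{There is a genuine gap.} It sits in the step you call bookkeeping: once the separation is chosen by balancing~$U$, the inequalities cannot be closed.

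Write $S = A_i \cap B_i$, $T = U \cap (A_i \setminus B_i)$ and $T' = U \cap (B_i \setminus A_i)$. Your two displayed bounds alone give only $\deg^+(v) - \deg^+(w) \geq 1 - h - |S|$, so everything must come from extra terms. Take the best choices available:
\begin{itemize}
\item $v \in T'$ with at most~$k$ in-neighbors in $D[T']$, by \zcref{prop:small-in-degree};
\item $w \in T$ with at most~$k$ out-neighbors in $D[T]$.
\end{itemize}
The choice of~$w$ needs \zcref{prop:small-in-degree} for the reversed digraph. \zcref{obs:out-degree-h-semicomplete} only produces vertices of \emph{large} out-degree, and $D$ may contain digons. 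A vertex of small \emph{in}-degree in~$T$, as you first propose, is the wrong direction. Even with these choices you get $\deg^+(w) \leq |A_i| - |T| + k$ and $\deg^+(v) \geq |A_i| - |S| + |T'| - 1 - k - h$, hence
\[
\deg^+(v) - \deg^+(w) \geq |T| + |T'| - |S| - 2k - h - 1 \geq |U| - 2|S| - 2k - h - 1 .
\]
For $|U| = 4k + 2h + 2$ and $|S| = k$ this is only $h + 1$, not more than $k + h$. Closing the argument this way would need about $5k + 2h + 2$ tangle vertices. Using the averaging of \zcref{obs:out-degree-h-semicomplete} for~$v$, as you suggest, gives only about $|T'|/2$ out-neighbors and is worse still.

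\textbf{The missing idea} is to place the separation by degree rather than by~$U$. Let $\alpha$ be the minimum out-degree on~$U$, and take $(A, B)$ with $|A \cap B| \leq k$ and $|A| = \alpha$, via \zcref{prop:separations-from-nice-path-decomposition}.
\begin{itemize}
\item Your own observation $\deg^+(x) \leq |A| - 1 = \alpha - 1$ for $x \in A \setminus B$ shows $U \cap (A \setminus B) = \emptyset$, so no vertex~$w$ is needed.
\item Then $D' = D[U \cap (B \setminus A)]$ has at least $3k + 2h + 2$ vertices.
\item A vertex~$u$ of in-degree at most~$k$ in~$D'$ has $\deg^+(u) \geq (\alpha - k) + (2k + 2h + 1) - h = \alpha + k + h + 1$.
\end{itemize}
This contradicts the tangle condition directly against~$\alpha$. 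It saves the~$k$ that your two-sided comparison spends on bounding~$w$.
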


\begin{proof}
  Let~$U$ be such a tangle, $\alpha = \min_{v \in U} \deg^+(v)$, and $\beta = \max_{v \in U} \deg^+(v)$, so $\beta - \alpha \leq k + h$.
  By contradiction, assume that~$D$ has a nice path decomposition of width at most~$k$.
  By \zcref{prop:separations-from-nice-path-decomposition},~$D$ has a separation~$(A, B)$ with $|A \cap B| \leq k$ and $|A| = \alpha$.
  A vertex~$v \in A \setminus B$ has all its out-neighbors in $A \setminus \{v\}$, so $\deg^+(v) \leq \alpha - 1$ and $U \cap (A \setminus B) = \emptyset$.
  Since $|A \cap B| \leq k$, $D' = D[U \cap (B \setminus A)]$ has at least $3k + 2h + 2$ vertices, and $\dpw(D') \leq k$ since~$D'$ is an induced subgraph of~$D$ (\zcref{obs:dpw-subgraph}).
  By \zcref{prop:small-in-degree}, there is $u \in V(D')$ with at most~$k$ in-neighbors in~$D'$.
  Since $(A, B)$ is a separation, every vertex of $A \setminus B$ adjacent to~$u$ is an out-neighbor of~$u$, and so is every vertex of $V(D') \setminus \{u\}$ adjacent to~$u$ that is not an in-neighbor.
  Since~$u$ has at most~$h$ non-neighbors, we conclude that
  \begin{align*}
    \deg^+(u) & \geq |A \setminus B| + (|V(D')| - 1 - k) - h                       \\
              & \geq (\alpha - k) + (2k + 2h + 1) - h = \alpha + k + h + 1 > \beta\enspace.
  \end{align*}
  This contradicts $u \in U$, and the result follows.
\end{proof}

\flushbottom
Next, we consider matching tangles.
The proof follows the one for semicomplete digraphs~\cite[Lemma 3.11]{FominP19}, and the only change is that the gap~$k + h$ between the out-degrees absorbs the at most~$h$ non-neighbors of a vertex of~$U$.

\begin{lemma}
\label{lem:matching-tangles-large-dpw}
  Let~$D$ be an \hsemi{h} digraph and~$k$ be a positive integer.
  If~$D$ has a $(k + 1, k + h)$-matching tangle, then $\dpw(D) > k$.
\end{lemma}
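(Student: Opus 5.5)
We follow the proof for semicomplete digraphs \cite[Lemma 3.11]{FominP19} and argue by contradiction. Let $(U, W)$ be a $(k+1, k+h)$-matching tangle, with $U = \{u_1, \ldots, u_{k+1}\}$ matched to $W = \{w_1, \ldots, w_{k+1}\}$ through arcs $(u_i, w_i)$. Set $\alpha = \max_{u \in U} \deg^+(u)$. Then every $w \in W$ satisfies $\deg^+(w) > \alpha + k + h$.

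Assume that $D$ has a nice path decomposition of width at most $k$. By \zcref{prop:separations-from-nice-path-decomposition}, there is a separation $(A, B)$ of $D$ with $|A \cap B| \leq k$ and $|A| = \alpha + k + h + 1$.

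First we show that $W \cap (A \setminus B) = \emptyset$. A vertex $v \in A \setminus B$ has all its out-neighbors in $A \setminus \{v\}$, because no arc leaves $A \setminus B$ towards $B \setminus A$. Hence $\deg^+(v) \leq |A| - 1 = \alpha + k + h$, and so $v \notin W$.

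Next we show that $U \cap (B \setminus A) = \emptyset$, and this is the step where $h$-semicompleteness enters. Let $u \in B \setminus A$. Every vertex of $A \setminus B$ adjacent to $u$ must be an out-neighbor of $u$, since no arc goes from $A \setminus B$ to $B \setminus A$. Since $u$ has at most $h$ non-neighbors, we get
\[
\deg^+(u) \geq |A \setminus B| - h \geq |A| - k - h = \alpha + 1 > \alpha .
\]
Hence $u \notin U$.

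Therefore $U \subseteq A$ and $W \subseteq B$. Each matching arc $(u_i, w_i)$ has $u_i \in A$ and $w_i \in B$, so $u_i$ or $w_i$ must lie in $A \cap B$. Otherwise $u_i \in A \setminus B$ and $w_i \in B \setminus A$, and $(u_i, w_i)$ would be an arc from $A \setminus B$ to $B \setminus A$, which a separation forbids. The $k+1$ matching arcs are pairwise disjoint, so $A \cap B$ contains at least $k+1$ distinct vertices. This contradicts $|A \cap B| \leq k$.

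The main point to check is the choice of $|A|$: it must be large enough that the lower bound for vertices of $B \setminus A$ exceeds $\alpha$ even after losing $k$ separator vertices and $h$ non-neighbors, and small enough that the upper bound for vertices of $A \setminus B$ stays below the out-degrees in $W$. The value $|A| = \alpha + k + h + 1$ meets both requirements exactly, given the gap $k + h$ between the out-degrees of $U$ and $W$. We also need $|A| \leq n$, which holds because $\deg^+(w) \leq n - 1$ for $w \in W$ gives $\alpha + k + h + 1 \leq \deg^+(w) \leq n - 1$.
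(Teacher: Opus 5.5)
Your proof is correct and follows the paper's route: take a separation of order at most $k$ with $|A|$ inside the out-degree gap, show $W \subseteq B$ and $U \subseteq A$ (the latter using the at most $h$ non-neighbors), and conclude that the $k+1$ disjoint matching arcs would need $k+1$ separator vertices. The only difference is cosmetic: the paper sets $|A| = \min_{w \in W} \deg^+(w)$ and obtains $U \subseteq A$ by contradicting this minimality, whereas you set $|A| = \max_{u \in U} \deg^+(u) + k + h + 1$, which makes both inclusions direct at the cost of the extra check $|A| \leq n$, which you handle correctly.
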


\begin{proof}
  Let $(U, W)$ be such a tangle and $\alpha = \min_{w \in W} \deg^+(w)$.
  By contradiction, assume that~$D$ has a nice path decomposition of width at most~$k$.
  By \zcref{prop:separations-from-nice-path-decomposition},~$D$ has a separation~$(A, B)$ with $|A \cap B| \leq k$ and $|A| = \alpha$.
  First, $W \subseteq B$: as above, a vertex~$w \in A \setminus B$ has $\deg^+(w) \leq \alpha - 1$.
  Next, we claim that $U \subseteq A$.
  Otherwise, let $u \in U \cap (B \setminus A)$.
  Since $(A, B)$ is a separation and~$D$ is \hsemi{h}, all but at most~$h$ vertices of $A \setminus B$ are out-neighbors of~$u$, so $\deg^+(u) \geq |A \setminus B| - h \geq \alpha - k - h$.
  Then every $w \in W$ has $\deg^+(w) > \deg^+(u) + k + h \geq \alpha$, contradicting the choice of~$\alpha$.
  Finally, at most~$k$ of the $k + 1$ arcs matching~$U$ to~$W$ have an endpoint in $A \cap B$.
  Any other one goes from $A \setminus B$ to $B \setminus A$, contradicting that $(A, B)$ is a separation, and the result follows.
\end{proof}

\begin{proof}[Proof of \zcref{thm:tangles-or-path-decomposition}]
  We run the algorithm of \zcref{prop:fp-approximation} with $k + h$ in place of~$k$, which is possible as $\ell \geq 4(k + h)$.
  In $\Ocal((k + h) \cdot n^2)$ time, it outputs a path decomposition of width at most $\ell + 2(k + h)$, an $(\ell + 2, k + h)$-degree tangle, which contains a $(4k + 2h + 2, k + h)$-degree tangle as $\ell + 2 \geq 4k + 2h + 2$, or a $(k + h + 1, k + h)$-matching tangle~$(U, W)$.
  Keeping $k + 1$ of the matching arcs and their endpoints gives a $(k + 1, k + h)$-matching tangle.
  In the last two cases, $\dpw(D) > k$ by \zcref{lem:degree-tangles-large-dpw,lem:matching-tangles-large-dpw}, and the result follows.
\end{proof}

\section{Finding \titlemath{k}-triples in \titlemath{h}-semicomplete digraphs of large directed pathwidth}
\label{sec:large-dpw-to-triples}

In this section, we show that every \hsemi{h} digraph on~$n$ vertices of directed pathwidth greater than $(h+1) \cdot k \cdot 2^{27k+9}$ contains a $k$-triple, and we find it in $\min\{{(h+1)}^3 \cdot 2^{\Ocal(k)} \cdot n^2, \Ocal(n^5)\}$ time, which is polynomial in~$n$ (\zcref{thm:large-dpw-to-triples}).
We run an algorithm of \textcite[proof of Theorem 9.9]{FominP19} on the detour completion of the input digraph~$D$, which outputs a path decomposition of bounded width or a short jungle.
We turn the short jungle into a $k$-triple of~$D$ in \zcref{sec:jungle-to-triple}, and we close the section with lower bounds showing that the linear dependence on~$h$ is best possible and the dependence on~$k$ is best possible up to the constant in the exponent (\zcref{lem:large-dpw-to-triples-tight}).

\subsection{Jungles and the detour completion}\label{sec:jungles-detour-completion}

\begin{definition}[Jungles {\cite[Section 1]{FradkinS13}} and short jungles {\cite[Definition 3.2]{FominP19}}]\label{def:jungles}
  Let~$D$ be a digraph and~$k$ be a positive integer.
  A set~$U \subseteq V(D)$ is a \emph{jungle of size~$k$} if $|U| = k$ and, for all distinct $v, w \in U$, either $(v, w) \in E(D)$ or there are~$k$ internally disjoint paths from~$v$ to~$w$ in~$D$.
  For a positive integer~$d$, the set~$U$ is a \emph{$(k, d)$-short jungle} if $|U| \geq k$ and, for all distinct $v, w \in U$, there are~$k$ internally disjoint paths from~$v$ to~$w$ in~$D$ of length at most~$d$.
\end{definition}

A short jungle is always given or output with the paths required by its definition, as given by \textcite[Section 3.1]{FominP19}.
They also observe that, in a semicomplete digraph, a path decomposition of small width would have a bag containing a whole jungle; their argument works in every digraph, and we include a proof for completeness.

\begin{lemma}\label{lem:jungle-dpw}
  Let~$t$ be a positive integer.
  If a digraph~$D$ contains a jungle of size~$t$, then $\dpw(D) \geq t - 1$.
\end{lemma}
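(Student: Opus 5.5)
We argue by contradiction, using the separations of \zcref{prop:separations-from-nice-path-decomposition}. Let~$U$ be a jungle of size~$t$, and suppose that~$D$ has a path decomposition of width at most $t - 2$. We may assume that it is nice, with the same width, as recalled after \zcref{obs:dpw-subgraph}. The goal is to find a bag that contains all of~$U$; this is impossible, since every bag has at most $t - 1$ vertices.

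To locate such a bag, order the vertices of~$U$ by the index of the bag that forgets them, and consider the first moment at which some vertex~$w \in U$ is forgotten. Let~$i$ be the index with $A_i \setminus B_i = \{w\} \cup (\text{earlier forgotten vertices})$; more concretely, take~$i$ such that~$X_i$ contains~$w$ and~$X_{i+1} = X_i \setminus \{w\}$. Then consider the separation $(A_{i+1}, B_{i+1})$, for which $w \in A_{i+1} \setminus B_{i+1}$ and $|A_{i+1} \cap B_{i+1}| \leq t - 1$. We would like every other $v \in U$ to be in $X_i$. First, no vertex of~$U$ was forgotten before~$w$, by the choice of~$w$. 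So suppose some $v \in U \setminus \{w\}$ has not yet been introduced, so that $v \in B_{i+1} \setminus A_{i+1}$.

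In that case the jungle property gives either the arc~$(w, v)$ or~$t$ internally disjoint paths from~$w$ to~$v$. The arc $(w, v)$ would go from $A_{i+1} \setminus B_{i+1}$ to $B_{i+1} \setminus A_{i+1}$, contradicting that $(A_{i+1}, B_{i+1})$ is a separation. Otherwise, each of the~$t$ internally disjoint paths must cross the separator. Since the paths share no internal vertex, they use~$t$ distinct separator vertices, whereas the separator $A_{i+1} \cap B_{i+1} = X_{i+1}$ has at most $t - 1$ vertices, again a contradiction. Note that the endpoints~$w$ and~$v$ themselves lie outside the separator, so each path really needs an internal vertex in it.

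Hence every vertex of~$U$ belongs to~$X_i$, so $|X_i| \geq t$ and the width is at least $t - 1$, which is the contradiction we wanted. The only delicate step is the bookkeeping for the forgetting index: we must check that $A_{i+1} \cap B_{i+1} = X_{i+1}$ in a nice decomposition and that no vertex of~$U$ is lost. Both facts follow from the interval property of bags.
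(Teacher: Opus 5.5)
Your proof is correct and is essentially the paper's argument: you look at the bag where the first vertex of $U$ is about to disappear, and note that either all of $U$ lies in that bag, or a not-yet-introduced vertex of $U$ forces the $t$ internally disjoint paths to place $t$ distinct internal vertices in a separator that is too small. The remaining differences are cosmetic, since the paper works with an arbitrary decomposition and argues directly, apart from one small slip: in general $A_{i+1}\cap B_{i+1}=X_{i+1}\cap X_{i+2}$, which can be a proper subset of $X_{i+1}$, but the inclusion $A_{i+1}\cap B_{i+1}\subseteq X_{i+1}$ is all your counting needs.
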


\begin{proof}
  Let~$U$ be a jungle of size~$t$ in~$D$ and $(X_1, \ldots, X_r)$ be a path decomposition of~$D$.
  Let~$i$ be the smallest index such that some $v \in U$ belongs to~$X_i$ and to no later bag.
  Let $A = X_1 \cup \cdots \cup X_i$ and $B = X_{i+1} \cup \cdots \cup X_r$.
  By \zcref{prop:separations-from-nice-path-decomposition}, $(A, B)$ is a separation of~$D$, and $A \cap B \subseteq X_i$ since the bags containing a given vertex form an interval.
  If $U \subseteq X_i$, then $|X_i| \geq t$.
  Otherwise, let $u \in U \setminus X_i$.
  Since the bags containing~$u$ form an interval,~$u$ belongs only to bags before~$X_i$ or only to bags after~$X_i$, and the first option contradicts the choice of~$i$.
  Thus $u \in B \setminus A$, and $v \in A \setminus B$ as~$v$ belongs to no bag after~$X_i$, so $(v, u) \notin E(D)$ and there are~$t$ internally disjoint paths from~$v$ to~$u$ in~$D$.
  The first vertex of each of these paths outside $A \setminus B$ is in $A \cap B \subseteq X_i$, since there is no arc from $A \setminus B$ to $B \setminus A$, and it is an internal vertex since it is neither~$v$ nor~$u$.
  These~$t$ vertices are distinct and differ from~$v$, so $|X_i| \geq t + 1$, and the result follows.
\end{proof}

The middle set~$B$ of a $k$-triple is a jungle of size~$k$, since for distinct $b, b' \in B$ the~$k$ matching arcs~$(c_i, a_i)$ give the internally disjoint paths~$(b, c_i, a_i, b')$, so by \zcref{lem:jungle-dpw} $k$-triples are obstructions as well.

\begin{corollary}\label{cor:triple-dpw}
  Let~$k$ be a positive integer.
  If a digraph~$D$ contains a $k$-triple, then $\dpw(D) \geq k - 1$.
\end{corollary}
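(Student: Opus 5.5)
The corollary follows once we show that the middle set~$B$ of a $k$-triple~$(A, B, C)$ is a jungle of size~$k$; then \zcref{lem:jungle-dpw} with $t = k$ gives $\dpw(D) \geq k - 1$.

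First fix the data. Let $A = \{a_1, \ldots, a_k\}$ and $C = \{c_1, \ldots, c_k\}$ be ordered so that $(c_i, a_i) \in E(D)$ for every $i \in [k]$; such orderings exist because~$C$ is matched to~$A$. Note that $|B| = k$ by \zcref{def:k-triple}.

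Now take distinct $b, b' \in B$. If $(b, b') \in E(D)$, the jungle condition already holds for this pair. Otherwise, for each $i \in [k]$ consider the walk $P_i = (b, c_i, a_i, b')$. Its three arcs exist:
\begin{itemize}
  \item $(b, c_i) \in E(D)$ because~$B$ is complete to~$C$;
  \item $(c_i, a_i) \in E(D)$ by the choice of the matching;
  \item $(a_i, b') \in E(D)$ because~$A$ is complete to~$B$.
\end{itemize}
Since $A$, $B$, $C$ are pairwise disjoint and $b \neq b'$, the four vertices of~$P_i$ are distinct, so~$P_i$ is a path. Its internal vertices are $c_i$ and $a_i$. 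For $i \neq j$ we have $c_i \neq c_j$ and $a_i \neq a_j$, and $A \cap C = \emptyset$, so the internal vertex sets $\{c_i, a_i\}$ are pairwise disjoint. In particular the paths are pairwise distinct, hence internally disjoint. This gives~$k$ internally disjoint paths from~$b$ to~$b'$.

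Thus~$B$ is a jungle of size~$k$ in~$D$, and \zcref{lem:jungle-dpw} yields $\dpw(D) \geq k - 1$. There is no real obstacle here; the only points to check are that the vertices of each~$P_i$ are distinct and that the internal vertices of different paths do not collide, and both follow from the pairwise disjointness of $A$, $B$, and~$C$.
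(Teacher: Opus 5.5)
Your proposal is correct and is essentially the paper's argument. The paper also observes that the middle set $B$ of a $k$-triple is a jungle of size $k$, using the internally disjoint paths $(b, c_i, a_i, b')$ given by the matching arcs, and then applies \zcref{lem:jungle-dpw} with $t = k$. Your checks that each path has distinct vertices and that internal vertices of different paths are disjoint spell out details the paper leaves implicit.
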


The following algorithm is the first step of the algorithm for \textsc{Rooted Immersion} in semicomplete digraphs~\cite[proof of Theorem~9.9]{FominP19}; see also the proof in Pilipczuk's PhD thesis~\cite[Theorem 106]{Pilipczuk13}.
It runs the approximation algorithm for semicomplete digraphs~\cite[Theorem 4.12]{FominP19}, or ours of \zcref{thm:tangles-or-path-decomposition} with $h = 0$, with $5K$ and $130K$ in place of the parameters~$k$ and~$\ell$, in $\Ocal(K \cdot n^2)$ time, and it extracts a $(K, 4)$-short jungle from the $(130K + 2, 5K)$-degree tangle or the $(5K + 1, 5K)$-matching tangle that this algorithm may output, in $\Ocal(K^3 \cdot n^2)$ time~\cite[Lemmas 3.9 and 3.12]{FominP19}.

\begin{proposition}[{\textcite[proof of Theorem 9.9]{FominP19}}]\label{prop:semicomplete-jungle-algorithm}
  Let~$T$ be an $n$-vertex semicomplete digraph and let~$K$ be a positive integer.
  There is an algorithm that, in time \mbox{$\Ocal(K^3 \cdot n^2)$}, outputs a path decomposition of~$T$ of width at most $140K$ or a $(K, 4)$-short jungle of~$T$.
\end{proposition}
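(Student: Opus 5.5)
The proposition is essentially a composition of two existing results of \textcite{FominP19}, so the plan is to chain them. First, apply the approximation algorithm for semicomplete digraphs, namely \zcref{thm:tangles-or-path-decomposition} with $h = 0$, which is exactly \cite[Theorem 4.12]{FominP19}. We run it on~$T$ with parameters $k = 5K$ and $\ell = 130K$; this is admissible since $130K \geq 4 \cdot 5K$. In $\Ocal(K \cdot n^2)$ time it outputs one of three things:
\begin{outline}[romanenumerate]
  \1 a path decomposition of width at most $\ell + 2k = 140K$, which we return directly;
  \1 a $(130K + 2, 5K)$-degree tangle of~$T$; or
  \1 a $(5K + 1, 5K)$-matching tangle of~$T$.
\end{outline}

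Second, in the two tangle cases we invoke the jungle-extraction lemmas for semicomplete digraphs, \cite[Lemma 3.9]{FominP19} for degree tangles and \cite[Lemma 3.12]{FominP19} for matching tangles. These lemmas turn a degree tangle of size about $26k$ with degree spread~$k$, or a matching tangle with $k+1$ arcs and gap~$k$, into a $(k/5, 4)$-short jungle together with its witnessing paths. The step to check is parameter bookkeeping: with $k = 5K$ the output must be a $(K,4)$-short jungle. I would verify that the constants $130K+2$ and $5K+1$ match the hypotheses of those lemmas; this is exactly the instantiation used in \cite[proof of Theorem 9.9]{FominP19} and \cite[Theorem 106]{Pilipczuk13}. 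If the tangle is larger than needed, we first trim it to the required size, which only weakens it harmlessly.

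Third, the running time. The extraction lemmas build, for each of the $\Ocal(K^2)$ pairs of jungle vertices, $K$ internally disjoint paths of length at most four. Each pair is handled by local counting over the neighbourhoods in $\Ocal(K \cdot n^2)$ time or better, which gives $\Ocal(K^3 \cdot n^2)$ in total. This dominates the $\Ocal(K \cdot n^2)$ cost of the first step.

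The main obstacle is only in confirming that the constants of the cited lemmas line up with $5K$ and $130K$ and that their running times are as claimed. Semicompleteness of~$T$ is genuinely needed in the extraction step, since those lemmas count common neighbours. No new combinatorics is required.
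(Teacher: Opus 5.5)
Your proposal is correct and matches the paper's justification: run the approximation algorithm (\zcref{thm:tangles-or-path-decomposition} with $h = 0$, i.e.\ \cite[Theorem 4.12]{FominP19}) with $5K$ and $130K$ in place of $k$ and $\ell$ in $\Ocal(K \cdot n^2)$ time, return the path decomposition of width at most $140K$ if one is found, and otherwise extract a $(K, 4)$-short jungle from the $(130K+2, 5K)$-degree tangle or the $(5K+1, 5K)$-matching tangle via \cite[Lemmas 3.9 and 3.12]{FominP19} in $\Ocal(K^3 \cdot n^2)$ time. Like the paper, you take the constant bookkeeping and the running time of the extraction from the cited lemmas, so your informal per-pair account of the $\Ocal(K^3 \cdot n^2)$ bound is not needed.
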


We apply \zcref{prop:semicomplete-jungle-algorithm} to the detour completion~$D^\star$ of~$D$ (\zcref{def:detours-and-completions}).
By \zcref{obs:out-degree-h-semicomplete}, an \hsemi{h} digraph~$D$ on~$n$ vertices has at most $hn/2$ pairs of non-neighbors.
Thus~$D^\star$ can be computed in $\Ocal((h+1) \cdot n^2)$ time by computing $|\detours{D}{u}{v}|$ and $|\detours{D}{v}{u}|$ for each such pair.

\subsection{From a short jungle to a \titlemath{k}-triple}\label{sec:jungle-to-triple}

We now show how to obtain a $k$-triple of~$D$ from a short jungle of~$D^\star$.
We first find in the jungle a set~$M$ of $8k$ vertices and two large sets~$Z^-$ and~$Z^+$ such that~$Z^-$ is complete to~$M$ and~$M$ is complete to~$Z^+$ in~$D$.
Then we build the $k$-triple from arcs of the paths of the jungle from~$Z^+$ to~$Z^-$, and we use detours (\zcref{def:detours-and-completions}) when such an arc is not an arc of~$D$.

For a digraph~$T$ and $X \subseteq V(T)$, we denote by $\Gamma^-_T(X)$ the set of vertices~$v \in V(T)$ that are complete to~$X$, and by $\Gamma^+_T(X)$ the set of vertices~$v$ such that~$X$ is complete to~$v$ (\zcref{fig:common-neighbors}).
Both sets are disjoint from~$X$, since~$T$ has no loops.

\begin{figure}[ht]
  \centering
  \begin{tikzpicture}[yscale=.8]
  \foreach \i/\y in {1/0, 2/1, 3/2} {
      \node[blackvertex] (l\i) at (0, \y) {};
      \node[blackvertex] (r\i) at (5, \y) {};
    }
  \foreach \j/\y in {1/.5, 2/1.5} {
      \node[blackvertex] (x\j) at (2.5, \y) {};
    }
  \foreach \i in {1, 2, 3} {
      \foreach \j in {1, 2} {
          \draw[arrow, opacity=.5] (l\i) -- (x\j);
          \draw[arrow, opacity=.5] (x\j) -- (r\i);
        }
    }
  \node[rectangle, draw, inner sep=6pt, fit=(l1)(l3), label=90:{$\Gamma^-_T(X)$}] {};
  \node[rectangle, draw, inner sep=6pt, fit=(x1)(x2), label=90:{$X$}] {};
  \node[rectangle, draw, inner sep=6pt, fit=(r1)(r3), label=90:{$\Gamma^+_T(X)$}] {};
\end{tikzpicture}%
%
  \caption{A set~$X$ of vertices of a digraph~$T$ with the sets~$\Gamma^-_T(X)$ and $\Gamma^+_T(X)$. Every vertex of $\Gamma^-_T(X)$ is complete to~$X$, and~$X$ is complete to every vertex of $\Gamma^+_T(X)$. The other arcs are omitted.}
  \label{fig:common-neighbors}
\end{figure}

The proof of the following lemma is a double count.
In a semicomplete digraph~$T$, we count, for each set~$M$ of a fixed size, the pairs~$(a, b)$ of distinct vertices with~$a$ complete to~$M$ and~$M$ complete to~$b$, which are exactly the pairs with $M \subseteq \detours{T}{a}{b}$.
The paths of length two of~$T$ make this number large on average over all sets~$M$, and a set~$M$ with many such pairs has many common in-neighbors and many common out-neighbors.
We then choose the vertices of~$M$ one at a time, each time keeping the average over the sets that contain the chosen vertices as large as possible, which gives, in polynomial time, a set~$M$ whose number of pairs is at least the average.
This is the method of conditional expectations~\cite{AlonS16}, written as a double count, without probability.

\begin{lemma}\label{lem:sandwich}
  Let~$L$ be a positive integer and~$T$ be a semicomplete digraph on $n \geq 36L$ vertices.
  There is an algorithm with run time~$\Ocal(L^2 \cdot n^3)$ that, given~$T$, outputs a set~$M \subseteq V(T)$ of size~$L$ such that $|\Gamma^-_T(M)| \geq n/8^L$ and $|\Gamma^+_T(M)| \geq n/8^L$.
\end{lemma}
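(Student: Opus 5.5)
The plan is a double count with a convexity step, followed by the method of conditional expectations for the algorithm. For an ordered pair~$(a, b)$ of distinct vertices, write $p(a, b) = |\detours{T}{a}{b}|$. For an $L$-subset~$M$ of~$V(T)$, let~$\pi(M)$ be the number of ordered pairs~$(a, b)$ with $M \subseteq \detours{T}{a}{b}$, that is, with $a \in \Gamma^-_T(M)$ and $b \in \Gamma^+_T(M)$. Then $\pi(M) \leq |\Gamma^-_T(M)| \cdot |\Gamma^+_T(M)|$ and each factor is at most~$n$. So it suffices to find~$M$ with $\pi(M) \geq n^2/8^L$, since then each factor is at least $n/8^L$. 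Double counting gives
\[
  \sum_{|M| = L} \pi(M) = \sum_{(a, b)} \binom{p(a, b)}{L}.
\]

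First I lower bound $\sum_{(a,b)} p(a,b)$, which is the number of paths of length two of~$T$. Any three vertices of the semicomplete digraph~$T$ span such a path, because the tournament formed by one arc per pair is either transitive or a directed triangle. Distinct triples give distinct paths, so the sum is at least $\binom{n}{3}$. Over the $n(n-1)$ ordered pairs the average of~$p$ is therefore at least $(n-2)/6$. Since $x \mapsto \binom{x}{L}$, extended by zero below $L-1$, is convex, Jensen's inequality gives $\sum_{(a,b)} \binom{p(a,b)}{L} \geq n(n-1)\binom{(n-2)/6}{L}$.

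Dividing by $\binom{n}{L}$, the average of~$\pi$ over all $L$-sets is at least
\[
  n(n-1) \prod_{i=0}^{L-1} \frac{(n-2)/6 - i}{n - i} \geq n(n-1) \left(\frac{(n-2)/6 - L}{n}\right)^{L}.
\]
With $n \geq 36L$ the base is at least $1/6 - 1/108 - 1/36 > 1/8$, with a constant-factor margin of about $1.03$. That margin absorbs the factor $n/(n-1) \leq 36/35$, so the average is at least $n^2/8^L$. This arithmetic, and checking the convexity range once $(n-2)/6 \geq L$, is the only delicate calculation, and it is where the hypothesis $n \geq 36L$ is used.

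For the algorithm, I first compute all values $p(a, b)$ in $\Ocal(n^3)$ time. I then build $M$ greedily. Given a chosen set~$S$ with $|S| = s < L$, the total of~$\pi$ over the $L$-sets containing~$S$ is
\[
  \Phi(S) = \sum_{(a,b)\,:\, S \subseteq \detours{T}{a}{b}} \binom{p(a,b) - s}{L - s}.
\]
Each $\Phi(S)$ is computable in $\Ocal(n^2)$ time, plus arithmetic on binomials of $\Ocal(L \log n)$ bits, by maintaining which pairs still have~$S$ inside their detour set. The average of~$\pi$ over the sets containing $S \cup \{v\}$ is a weighted average, over the choices of~$v$, of the average over the sets containing~$S$. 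Hence choosing~$v$ to maximize $\Phi(S \cup \{v\})$ keeps the conditional average at least $n^2/8^L$. After~$L$ steps the final set~$M$ satisfies $\pi(M) \geq n^2/8^L$. There are $L$ steps with $n$ candidates each, which gives the $\Ocal(L^2 \cdot n^3)$ bound with the big-number arithmetic included. The main obstacle is the constant bookkeeping in the averaging bound, not the algorithm.
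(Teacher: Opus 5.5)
Your proof is correct and follows the paper's route. It uses the same double count~$\Phi(S)$ maximized greedily by conditional expectations, the $\binom{n}{3}$ lower bound on paths of length two, and reads $|\Gamma^\pm_T(M)|$ off the number of pairs for~$M$. The only differences are that you apply Jensen to the convex extension of $\binom{x}{L}$, where the paper bounds $\binom{\delta(a,b)}{L} \geq \binom{n}{L}(q(a,b)/n)^L$ termwise and then uses the power mean inequality, and that you target $n^2/8^L$ (hence your $28/27$ margin) rather than the paper's $n(n-1)/8^L$.

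One wording slip: your averaging sentence is reversed. It is the average over the $L$-sets containing~$S$ that equals the uniform average over $v \notin S$ of the averages over the $L$-sets containing $S \cup \{v\}$. Since all these classes have the same size $\binom{n-s-1}{L-s-1}$, maximizing $\Phi(S \cup \{v\})$ is indeed the right greedy choice.
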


\begin{proof}
  We first write the counts as sums over pairs of vertices.
  For distinct vertices~$a$ and~$b$ of~$T$, let $\delta(a, b) = |\detours{T}{a}{b}|$ be the number of detours from~$a$ to~$b$.
  For a set~$S \subseteq V(T)$ of at most~$L$ vertices, let $\Pi(S)$ be the set of ordered pairs~$(a, b)$ of distinct vertices with $S \subseteq \detours{T}{a}{b}$.
  Let $\Phi(S)$ be the number of pairs~$(M', (a, b))$ such that~$M'$ is a set of~$L$ vertices with $S \subseteq M'$, $(a, b)$ is an ordered pair of distinct vertices, and $M' \subseteq \detours{T}{a}{b}$.
  Note that, for a set~$X \subseteq V(T)$ and distinct vertices~$a$ and~$b$, we have $X \subseteq \detours{T}{a}{b}$ if and only if~$a$ is complete to~$X$ and~$X$ is complete to~$b$, that is, $a \in \Gamma^-_T(X)$ and $b \in \Gamma^+_T(X)$.
  Every pair counted by $\Phi(S)$ has $(a, b) \in \Pi(S)$, as $S \subseteq M'$.
  Conversely, for $(a, b) \in \Pi(S)$, the sets~$M'$ with $S \subseteq M' \subseteq \detours{T}{a}{b}$ are the unions of~$S$ with a subset of $\detours{T}{a}{b} \setminus S$ of size~$L - |S|$, so
  \begin{equation*}
    \Phi(S) = \sum_{(a, b) \in \Pi(S)} \binom{\delta(a, b) - |S|}{L - |S|} \enspace.
  \end{equation*}

  Next, we show that, if $|S| < L$, then some vertex~$x \notin S$ satisfies $\Phi(S \cup \{x\}) \geq \frac{L - |S|}{n - |S|} \cdot \Phi(S)$, that is, the average of $|\Pi(M')|$ over the sets~$M'$ of~$L$ vertices that contain $S \cup \{x\}$ is at least the average over those that contain~$S$.
  Every pair~$(M', (a, b))$ counted by $\Phi(S)$ is counted by $\Phi(S \cup \{x\})$ exactly for the $L - |S|$ vertices~$x \in M' \setminus S$, so $\sum_{x \notin S} \Phi(S \cup \{x\}) = (L - |S|) \cdot \Phi(S)$.
  As this sum has $n - |S|$ terms, some term is at least their average, which is the claim.
  The algorithm starts with $S = \emptyset$ and, while $|S| < L$, adds to~$S$ a vertex~$x \notin S$ that maximizes $\Phi(S \cup \{x\})$.
  It outputs $M = S$ when $|S| = L$, and by the claim applied at each of the~$L$ steps,
  \begin{equation*}
    \Phi(M) \geq \Phi(\emptyset) \cdot \prod_{i = 0}^{L-1} \frac{L - i}{n - i} = \frac{\Phi(\emptyset)}{\binom{n}{L}} \enspace.
  \end{equation*}

  We now bound $\Phi(\emptyset) = \sum_{(a, b) \in \Pi(\emptyset)} \binom{\delta(a, b)}{L}$ from below, where $\Pi(\emptyset)$ consists of all $n(n-1)$ ordered pairs of distinct vertices.
  The sum of $\delta(a, b)$ over these pairs is the number of paths of length two in~$T$.
  Any three vertices of~$T$ span such a path, since choosing one arc between each two of them gives a tournament on three vertices, in which some vertex has an in-neighbor and an out-neighbor.
  Distinct sets of three vertices give distinct paths, so~$T$ has at least $\binom{n}{3} = n(n-1)(n-2)/6$ paths of length two.
  For $(a, b) \in \Pi(\emptyset)$, let $q(a, b) = \max\{\delta(a, b) - L, 0\}$.
  We claim that $\binom{\delta(a, b)}{L} \geq \binom{n}{L} \cdot {(q(a, b)/n)}^L$.
  This is trivial if $q(a, b) = 0$.
  Otherwise $\delta(a, b) > L$, and for every $i \in \{0, \ldots, L-1\}$ we have $(\delta(a, b) - i)/(n - i) \geq (\delta(a, b) - L)/n$, as this is equivalent to $L(n - i) \geq i(n - \delta(a, b))$, which holds since $L \geq i$ and $n - i \geq n - \delta(a, b) \geq 0$.
  The product of these~$L$ inequalities gives
  \begin{equation*}
    \binom{\delta(a, b)}{L} \Big/ \binom{n}{L} = \prod_{i = 0}^{L-1} \frac{\delta(a, b) - i}{n - i} \geq {\left(\frac{\delta(a, b) - L}{n}\right)}^L = {\left(\frac{q(a, b)}{n}\right)}^L,
  \end{equation*}
  as claimed.
  Since $q(a, b) \geq \delta(a, b) - L$, the sum of $q(a, b)$ over the $n(n-1)$ pairs is at least $n(n-1)(n-2)/6 - L \cdot n(n-1)$, so their average is at least $(n - 2)/6 - L \geq n/8$, where the inequality is equivalent to $n \geq 24L + 8$ and holds as $n \geq 36L$.
  By the power mean inequality, $\sum_{(a, b) \in \Pi(\emptyset)} {q(a, b)}^L \geq n(n-1) \cdot {(n/8)}^L$, as $L \geq 1$.
  Thus $\Phi(\emptyset) \geq \binom{n}{L} \cdot n(n-1)/8^L$ by the bound on $\binom{\delta(a, b)}{L}$, and the bound on $\Phi(M)$ gives $\Phi(M) \geq n(n-1)/8^L$.

  Finally, we bound~$|\Gamma^-_T(M)|$ and $|\Gamma^+_T(M)|$.
  Since $|M| = L$, the pairs counted by $\Phi(M)$ are the pairs~$(M', (a, b))$ with $M' = M$ and $(a, b) \in \Pi(M)$, so $\Phi(M) = |\Pi(M)|$.
  Every pair of $\Pi(M)$ has its first vertex in $\Gamma^-_T(M)$, and each such vertex is the first vertex of at most $n - 1$ pairs.
  Thus $n(n-1)/8^L \leq \Phi(M) \leq |\Gamma^-_T(M)| \cdot (n - 1)$, which gives $|\Gamma^-_T(M)| \geq n/8^L$, and symmetrically $|\Gamma^+_T(M)| \geq n/8^L$.

  For the run time, we compute $\delta(a, b)$ for all pairs in $\Ocal(n^3)$ time, and the binomial coefficients~$\binom{c}{j}$ with $c \leq n$ and $j \leq L$ by Pascal's rule, in $\Ocal(Ln)$ additions.
  We maintain the sets~$\Gamma^-_T(S)$ and $\Gamma^+_T(S)$, so that $(a, b) \in \Pi(S \cup \{x\})$ is tested in constant time, as it means $(a, b) \in \Pi(S)$ and $(a, x), (x, b) \in E(T)$.
  Thus, in each of the~$L$ steps, each of the at most~$n$ values~$\Phi(S \cup \{x\})$ is computed by the formula above as a sum of at most~$n^2$ binomial coefficients, so each step takes $\Ocal(n^3)$ additions and the~$L$ steps take $\Ocal(L \cdot n^3)$ additions.
  All numbers are at most $n^2 \cdot \binom{n}{L} \leq n^{L+2}$, as $L \leq n/2$, so each addition takes $\Ocal(L)$ time on words of $\Ocal(\log n)$ bits, the total run time is $\Ocal(L^2 \cdot n^3)$, and the result follows.
\end{proof}

Next, let $(p_0, \ldots, p_r)$ be a short path from~$Z^+$ to~$Z^-$.
Every vertex~$x \in M$ is an in-neighbor of~$p_0$ and an out-neighbor of~$p_r$, since~$M$ is complete to~$Z^+$ and~$Z^-$ is complete to~$M$.
We say that~$x$ \emph{flips} at the arc~$(p_{i-1}, p_i)$, where $i \in [r]$, if~$x$ is an in-neighbor of~$p_{i-1}$ and an out-neighbor of~$p_i$.
We show that many vertices of~$M$ flip at the same arc.
The following lemma does not use the arcs of the path, so we state it for a sequence of vertices.
See \zcref{fig:triples-flip}.

\begin{lemma}\label{lem:flip}
  Let~$D$ be a digraph, $M \subseteq V(D)$, and $p_0, \ldots, p_r$ be vertices of~$D$ with $r \geq 1$.
  Assume that~$M$ is complete to~$p_0$,~$p_r$ is complete to~$M$, and every~$p_i$ with $0 < i < r$ is outside~$M$ and adjacent to every vertex of~$M$.
  Then there are an index~$i \in [r]$ and a set~$F \subseteq M$ of size at least $|M|/r$ such that~$p_i$ is complete to~$F$ and~$F$ is complete to~$p_{i-1}$.
\end{lemma}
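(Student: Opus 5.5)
For each $x \in M$ we find an index $i \in [r]$ at which $x$ flips, and then apply pigeonhole over the $r$ possible indices.

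Fix $x \in M$. Say $x$ is \emph{before} $p_j$ if $(x, p_j) \in E(D)$, that is, $x$ is an in-neighbor of~$p_j$. Say $x$ is \emph{after} $p_j$ if $(p_j, x) \in E(D)$. Since $M$ is complete to $p_0$, the vertex $x$ is before $p_0$. Since $p_r$ is complete to $M$, the vertex $x$ is after $p_r$.

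Let $j$ be the largest index in $\{0, \ldots, r-1\}$ such that $x$ is before $p_j$; it exists because $j = 0$ qualifies. Set $i = j + 1 \in [r]$. We claim $x$ is after $p_i$.
\begin{itemize}
  \item If $i = r$, this holds by hypothesis.
  \item If $0 < i < r$, then $p_i \notin M$, so $p_i \neq x$, and $p_i$ is adjacent to $x$. By maximality of $j$, the arc $(x, p_i)$ is absent, so the arc $(p_i, x)$ is present.
\end{itemize}
Thus $x$ is an in-neighbor of $p_{i-1}$ and an out-neighbor of $p_i$. The choice of $j$ avoids any conflict with arcs in both directions, because we only need membership of arcs, not exclusivity.

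Assign to each $x \in M$ such an index $i(x) \in [r]$. By pigeonhole, some $i \in [r]$ is assigned to at least $|M|/r$ vertices. Let $F$ be that set of vertices. Then $p_i$ is complete to $F$ and $F$ is complete to $p_{i-1}$, as required.

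The one delicate point is the interior case. There we need that $p_i$ is distinct from $x$ and adjacent to it, so that failing to have $(x, p_i)$ forces $(p_i, x)$. The endpoints $p_0$ and $p_r$ may lie anywhere, but they are handled directly by the completeness hypotheses. No other obstacle is expected.
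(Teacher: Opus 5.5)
Your proof is correct and follows essentially the paper's argument. The paper takes $\mu(x)$ to be the smallest $i \in [r]$ with $(p_i, x) \in E(D)$ and uses minimality to obtain $(x, p_{\mu(x)-1}) \in E(D)$, whereas you take the largest $j \in \{0, \ldots, r-1\}$ with $(x, p_j) \in E(D)$ and use maximality to obtain $(p_{j+1}, x) \in E(D)$; this is the mirror image of the same choice, followed by the same pigeonhole step over the $r$ indices.
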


\begin{figure}[ht]
  \centering
  \begin{tikzpicture}
  \foreach \i/\x in {0/0, 1/1.5, 2/3, 3/4.5, 4/6} {
      \node[blackvertex, label={[yshift=-.1cm, name=lp\i]-90:{$p_\i$}}] (p\i) at (\x, 0) {};
    }
  \draw[arrow] (p0) -- (p1);
  \draw[arrow] (p1) -- (p2);
  \draw[arrow, very thick] (p2) -- (p3);
  \draw[arrow] (p3) -- (p4);
  \node[blackvertex, label={[xshift=.1cm]0:{$x$}}] (t) at (3, 2.6) {};
  \node[vertex] (t1) at (1.8, 2.6) {};
  \node[vertex] (t2) at (4.2, 2.6) {};
  \draw[arrow] (t) -- (p0);
  \draw[arrow] (t) -- (p1);
  \draw[arrow] (t) -- (p2);
  \draw[arrow] (p3) -- (t);
  \draw[arrow] (p4) -- (t);
  \node[rectangle, draw, inner sep=6pt, fit=(t1)(t2), label=90:{$M$}] {};
  \node[rectangle, draw, inner sep=4pt, fit=(p0)(lp0), label=180:{$Z^+$}] {};
  \node[rectangle, draw, inner sep=4pt, fit=(p4)(lp4), label=0:{$Z^-$}] {};
\end{tikzpicture}%
%
  \caption{A path~$P = (p_0, \ldots, p_4)$ from~$Z^+$ to~$Z^-$ and a vertex~$x \in M$ with $\mu(x) = 3$ in the proof of \zcref{lem:flip}:~$x$ is an in-neighbor of~$p_0$,~$p_1$, and~$p_2$ and an out-neighbor of~$p_3$ and~$p_4$, so it flips at the arc~$(p_2, p_3)$, drawn thick. If \zcref{lem:flip} gives the index~$i = 3$, then $x \in F$. Arcs incident to the other vertices of~$M$ are omitted.}
  \label{fig:triples-flip}
\end{figure}

\begin{proof}
  For $x \in M$, let $\mu(x)$ be the smallest $i \in [r]$ with $(p_i, x) \in E(D)$, which is well defined since $(p_r, x) \in E(D)$.
  Then $(p_{\mu(x)}, x) \in E(D)$.
  If $\mu(x) = 1$, then $(x, p_0) \in E(D)$ since~$M$ is complete to~$p_0$.
  If $\mu(x) \geq 2$, then~$p_{\mu(x) - 1}$ is outside~$M$ and adjacent to~$x$ by hypothesis, and $(p_{\mu(x) - 1}, x) \notin E(D)$ by the minimality of $\mu(x)$, so $(x, p_{\mu(x) - 1}) \in E(D)$.
  Now, the sets~$F_i = \{x \in M \mid \mu(x) = i\}$ with $i \in [r]$ partition~$M$, so some~$F_i$ has size at least $|M|/r$, and the result follows with $F = F_i$.
\end{proof}

The next lemma extracts a $k$-triple of~$D$ from a short jungle of any completion~$S$ of~$D$, and we apply it to the detour completion~$D^\star$ in this section and, in \zcref{sec:anti-triples}, to the completion that adds both arcs between every two non-neighbors.
Its second output arises only when, on many paths of the jungle, the arc at which many vertices of~$M$ flip is not an arc of~$D$.

\begin{lemma}\label{lem:jungle-extraction}
  Let~$k$ and~$s$ be positive integers,~$h$ be a non-negative integer, and $\fref{lem:jungle-extraction}(k, h, s) = (h+1) \cdot k \cdot 2^{27k+1} + 2^{8k} \cdot s$.
  Let~$D$ be an \hsemi{h} digraph,~$S$ be a completion of~$D$, and~$Z$ be a $(J, 4)$-short jungle of~$S$, where $J = \fref{lem:jungle-extraction}(k, h, s)$.
  There is an algorithm with run time~$\Ocal(k^2 \cdot J^3)$ that, given~$D$ and~$Z$, outputs one of the following:
  \begin{outline}[enumerate]
    \1 a $k$-triple of~$D$, or
    \1 a set~$B \subseteq V(D)$ of size~$2k$ and~$s$ pairs~$(a_1, c_1), \ldots, (a_s, c_s)$ of non-neighbors of~$D$ whose $2s$ vertices are pairwise distinct, and outside~$B$.
       For every $j \in [s]$, we have $(c_j, a_j) \in E(S)$ and, in~$D$,~$a_j$ is complete to~$B$ and~$B$ is complete to~$c_j$.
  \end{outline}
\end{lemma}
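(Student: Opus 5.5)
We follow the outline of \zcref{sec:overview}. First apply \zcref{lem:semicomplete-from-h-semicomplete} to $D[Z]$ to obtain $Z' \subseteq Z$ of size $N = \lceil J/(h+1) \rceil$ with $D[Z']$ semicomplete; this is where the factor $h+1$ in $\fref{lem:jungle-extraction}$ is spent. Then apply \zcref{lem:sandwich} to $T = D[Z']$ with $L = 8k$, which needs $N \geq 36 \cdot 8k$ and holds comfortably. This yields $M \subseteq Z'$ of size $8k$ with $Z^- = \Gamma^-_T(M)$ and $Z^+ = \Gamma^+_T(M)$ both of size at least $N/8^{8k} = N/2^{24k}$, complete to and from $M$ in $D$.

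Next, choose pairs $(z^+_j, z^-_j) \in Z^+ \times Z^-$ and, for each, one of the $J$ internally disjoint paths of length at most four from $z^+_j$ to $z^-_j$ in $S$. Apply \zcref{lem:flip} in $D$ to each path. The hypothesis that the internal vertices avoid $M$ and are adjacent in $D$ to all of $M$ can fail only for few paths: the paths are internally disjoint, and the $8k$ vertices of $M$ have at most $8kh$ non-neighbors in total plus $8k$ vertices of $M$ itself, so at most $8k(h+1)$ of the $J$ paths are bad. This gives an arc $(c,a)$ of $S$ on the path and a set $F \subseteq M$ of size $2k$ with $a$ complete to $F$ and $F$ complete to $c$ in $D$. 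We want many such candidate arcs with pairwise distinct endpoints, disjoint from $M$. If the same pair $(z^+,z^-)$ is reused, the endpoints can coincide only at $z^\pm$, so we use distinct pairs $(z^+_j,z^-_j)$ drawn from disjoint parts of $Z^\pm$, greedily discarding any path whose candidate endpoints collide with earlier ones. Each path has at most five vertices, so a linear number of paths are discarded, and the counts $|Z^\pm| \geq J/((h+1)2^{24k})$ are large enough to produce $m = k \cdot 8^k + 2^{8k} s$ candidates. Candidate endpoints lying in $M$ cannot be endpoints of a flipping arc for vertices of $M$ other than themselves, which is handled by shrinking $F$ slightly, or simply by excluding $M$ via the adjacency hypothesis.

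Then split the candidates according to whether $(c,a) \in E(D)$. Suppose at least $k \cdot 8^k$ of them are arcs of $D$. Each $F$ contains $\binom{2k}{k}$ of the $\binom{8k}{k}$ $k$-subsets of $M$, and $\binom{2k}{k}/\binom{8k}{k} > 8^{-k}$, so by pigeonhole some $k$-subset $B$ lies in the $F$ of at least $k$ candidates. Their heads form $A$ and their tails form $C$, and $(A,B,C)$ is a $k$-triple of $D$. Otherwise at least $2^{8k} s$ candidates are non-arcs of $D$, hence non-neighbor pairs of $D$ with $(c,a) \in E(S)$. Since $\binom{8k}{2k} \leq 2^{8k}$, pigeonhole over the $2k$-subsets of $M$ gives a $B$ of size $2k$ equal to $F$ for at least $s$ of them, which is the second output. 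All computations are polynomial, dominated by \zcref{lem:sandwich} with $L = 8k$ on at most $J$ vertices, which gives $\Ocal(k^2 J^3)$.

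The main obstacle is the second step. We must guarantee simultaneously that the paths meet the adjacency hypothesis of \zcref{lem:flip}, have pairwise disjoint candidate endpoints, and are numerous enough, while the short jungle only provides internal disjointness between paths with the same ends. If the bookkeeping fails, we would first try picking a single $z^+ \in Z^+$ and distinct $z^-_j \in Z^-$, with one path each, and discard collisions greedily using that $J$ dominates $2^{27k}(h+1)k$.
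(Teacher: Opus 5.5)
\textbf{There is a genuine gap in your second step, and it is quantitative.} You build one pool of $k\cdot 8^k + 2^{8k}s$ candidate arcs with pairwise distinct endpoints, taking one path per jungle pair $(z^+_j,z^-_j)$ with the pairs on disjoint vertices. But the $s$-term of $J$ is only additive. After \zcref{lem:semicomplete-from-h-semicomplete} and \zcref{lem:sandwich} you only know $|Z^\pm| \geq \lfloor J/(h+1)\rfloor/8^{8k}$, which is about $2k\cdot 8^k + s/((h+1)2^{16k})$. That is far fewer than the $2^{8k}s$ disjoint pairs you need; for $k=s=1$, $h=0$ it gives about $16$ vertices against $264$ pairs. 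Your fallback with a single $z^+$ still needs $2^{8k}s$ distinct vertices $z^-_j$. It also lets arc candidates with tail $z^+$ collide without bound. Reusing all of $Z^+\times Z^-$ has the same collision problem and is still too small in general.

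\textbf{The missing idea.} You noticed that reusing a pair only risks collisions at its two ends. What you miss is that for \emph{non-arc} candidates this risk is bounded by~$h$. So the $2^{8k}s$ term should be spent inside a single family $\mathcal{P}_j$ of $J$ internally disjoint paths between one fixed pair $v_j,w_j$.
\begin{itemize}
  \item Internal vertices of distinct paths of one family are automatically distinct.
  \item If $(c_P,a_P)\notin E(D)$ and $c_P=v_j$, then $a_P$ is a non-neighbor of $v_j$ in~$D$. So this happens on at most $h$ paths, and symmetrically at most $h$ paths have $a_P=w_j$.
  \item No such bound exists for arc candidates: their tail may be $v_j$ on arbitrarily many paths, so they must come from distinct families.
\end{itemize}

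Hence the case split must be per family, not over one pool.
\begin{itemize}
  \item If some family has at least $2^{8k}s$ good paths whose non-arc candidate has both ends internal, fix a $2k$-subset of each $F_P$. Pigeonhole over the $2k$-subsets of $M$ then gives the second output.
  \item Otherwise every family has at least $J-8k(h+1)-2h-2^{8k}s+1\geq 4m$ good paths whose candidate is in $E(D)$, where $m=k\cdot 8^k$. Pick one such path from each of $m$ families on disjoint pairs, greedily avoiding earlier endpoints; each earlier vertex other than $v_j,w_j$ excludes at most one path of a family.
\end{itemize}
The second case only needs $|Z^\pm|\geq 2m$, which \zcref{lem:sandwich} provides on a semicomplete set of size $k\cdot 2^{27k+1}$. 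With this change, your flip step and your final pigeonhole over $k$-subsets go through as you wrote them.
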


\begin{proof}
  Let $L = 8k$, $m = k \cdot 8^k$, $n_0 = 2m \cdot 8^L = k \cdot 2^{27k+1}$, and $m' = 2^L \cdot s$, so that $J = (h+1) \cdot n_0 + m'$.
  Discarding vertices of~$Z$ if necessary, we assume that $|Z| = J$.

  First, we find~$M$,~$Z^-$, and~$Z^+$.
  Since $D[Z]$ is \hsemi{h} and $J \geq (h+1) \cdot n_0$, we apply \zcref{lem:semicomplete-from-h-semicomplete} to $D[Z]$ to find $Z' \subseteq Z$ with $|Z'| = n_0$ such that $D[Z']$ is semicomplete.
  As $L \geq 8$, we have $n_0 \geq 8^L \geq 36L$.
  Thus the algorithm of \zcref{lem:sandwich} applied to $D[Z']$ gives a set~$M \subseteq Z'$ of size~$L$ such that $\Gamma^-_{D[Z']}(M)$ and $\Gamma^+_{D[Z']}(M)$ both have size at least $n_0/8^L = 2m$.
  Let~$Z^-$ be a subset of $\Gamma^-_{D[Z']}(M)$ of size~$m$ and~$Z^+$ be a subset of $\Gamma^+_{D[Z']}(M) \setminus Z^-$ of size~$m$.
  Then~$M$,~$Z^-$, and~$Z^+$ are pairwise disjoint,~$Z^-$ is complete to~$M$ in~$D$, and~$M$ is complete to~$Z^+$ in~$D$.
  We write $Z^+ = \{v_1, \ldots, v_m\}$ and $Z^- = \{w_1, \ldots, w_m\}$, and, for $j \in [m]$, we let~$\mathcal{P}_j$ be the given family of~$J$ internally disjoint paths of~$S$ from~$v_j$ to~$w_j$ of length at most~$4$.
  Since these paths are internally disjoint, every vertex other than~$v_j$ and~$w_j$ lies on at most one path of~$\mathcal{P}_j$, as an internal vertex.

  Now, let $j \in [m]$.
  We say that a path of~$\mathcal{P}_j$ is \emph{good} if none of its internal vertices belongs to~$M$ and each of them is adjacent in~$D$ to every vertex of~$M$.
  At most $L(h+1)$ vertices belong to~$M$ or are non-neighbors of some vertex of~$M$, since each of the~$L$ vertices of~$M$ has at most~$h$ non-neighbors.
  Each of them is an internal vertex of at most one path of~$\mathcal{P}_j$, so at most $L(h+1)$ paths of~$\mathcal{P}_j$ are not good.
  Let $P = (p_0, \ldots, p_r)$ be a good path of~$\mathcal{P}_j$.
  Since~$P$ is good,~$M$ is complete to~$v_j$,~$w_j$ is complete to~$M$ in~$D$, and $r \geq 1$ as $v_j \neq w_j$, we can apply \zcref{lem:flip} to~$D$,~$M$, and $p_0, \ldots, p_r$.
  Since $r \leq 4$, it gives an index~$i_P \in [r]$ and a set~$F_P \subseteq M$ of size at least $L/4 = 2k$, which we fix.
  Let $a_P = p_{i_P}$ and $c_P = p_{i_P - 1}$.
  Then $(c_P, a_P)$ is an arc of~$P$ and of~$S$, and in~$D$ the vertex~$a_P$ is complete to~$F_P$ and~$F_P$ is complete to~$c_P$.

  Next, we show that at most $2h$ good paths~$P$ of~$\mathcal{P}_j$ with $(c_P, a_P) \notin E(D)$ have $c_P = v_j$ or $a_P = w_j$.
  For such a path,~$c_P$ and~$a_P$ are non-neighbors of~$D$, since~$S$ is a completion of~$D$.
  If $c_P = v_j$, then $a_P \neq w_j$, as~$v_j$ and~$w_j$ are adjacent in the semicomplete digraph~$D[Z']$, so~$a_P$ is an internal vertex of~$P$ and a non-neighbor of~$v_j$.
  Since~$v_j$ has at most~$h$ non-neighbors, at most~$h$ of these paths have $c_P = v_j$, and symmetrically at most~$h$ of them have $a_P = w_j$.
  For every other such path, both~$c_P$ and~$a_P$ are internal vertices of~$P$.

  Assume first that, for some $j \in [m]$, at least~$m'$ good paths~$P$ of~$\mathcal{P}_j$ have \mbox{$(c_P, a_P) \notin E(D)$} with~$c_P$ and~$a_P$ internal vertices of~$P$.
  For each of them, fix a subset~$B_P$ of~$F_P$ of size~$2k$.
  Since~$M$ has $\binom{8k}{2k} \leq 2^L$ subsets of size~$2k$ and $m' = 2^L \cdot s$, there is a set~$B \subseteq M$ with $B_P = B$ for at least~$s$ of these paths~$P$.
  We output~$B$ and the pairs~$(a_P, c_P)$ of~$s$ such paths.
  Their $2s$ vertices are pairwise distinct, since $a_P \neq c_P$ and each of them is an internal vertex of only one path of~$\mathcal{P}_j$, and they are outside $M \supseteq B$, since these paths are good.
  As $B \subseteq F_P$, the vertex~$a_P$ is complete to~$B$ and~$B$ is complete to~$c_P$.

  Assume now that this is not the case, and let $j \in [m]$.
  Then~$\mathcal{P}_j$ contains at most $L(h+1)$ paths that are not good and at most $2h + m' - 1$ good paths~$P$ with $(c_P, a_P) \notin E(D)$, so it contains at least
  \begin{equation*}
    J - L(h+1) - 2h - m' + 1 = (h+1)(n_0 - L) - 2h + 1 \geq 4m
  \end{equation*}
  good paths~$P$ with $(c_P, a_P) \in E(D)$, where the inequality uses $n_0 \geq 4m + L + 2$, which holds as $n_0 = 2m \cdot 8^L$ and $m \geq L \geq 8$.
  For $j = 1, \ldots, m$ in turn, we choose such a path~$P_j$ whose vertices~$a_{P_j}$ and~$c_{P_j}$ are outside
  \begin{equation*}
    X_j = (Z^+ \cup Z^- \cup \{a_{P_1}, c_{P_1}, \ldots, a_{P_{j-1}}, c_{P_{j-1}}\}) \setminus \{v_j, w_j\} \enspace.
  \end{equation*}
  A vertex of~$X_j$ differs from~$v_j$ and~$w_j$, so it lies on at most one path of~$\mathcal{P}_j$ and excludes at most one path.
  As $|X_j| \leq 2m - 2 + 2(j - 1) < 4m$, the path~$P_j$ exists.
  Now, the~$2m$ vertices~$a_{P_j}$ and~$c_{P_j}$ with $j \in [m]$ are pairwise distinct.
  To see this, note that $a_{P_j} \neq c_{P_j}$, as they are consecutive on~$P_j$, and let $j < j'$.
  The vertices~$a_{P_j}$ and~$c_{P_j}$ are outside $(Z^+ \cup Z^-) \setminus \{v_j, w_j\}$, so they differ from~$v_{j'}$ and~$w_{j'}$ and belong to~$X_{j'}$.
  By the choice of~$P_{j'}$, they differ from~$a_{P_{j'}}$ and~$c_{P_{j'}}$.
  All $2m$ vertices are outside~$M$, since each of them is an internal vertex of a good path or belongs to $Z^+ \cup Z^-$.

  The following claim gives~$k$ of the paths~$P_j$ whose sets~$F_{P_j}$ share~$k$ vertices.

  \begin{claim}\label{clm:common-subset}
    Let $F_1, \ldots, F_m$ be subsets of~$M$ of size at least $2k$.
    Then there are sets~$B \subseteq M$ and $I \subseteq [m]$ of size~$k$ such that $B \subseteq F_j$ for every $j \in I$.
  \end{claim}

  \begin{cproof}
    Each set~$F_j$ has at least $\binom{2k}{k}$ subsets of size~$k$, and~$M$ has $\binom{8k}{k}$ subsets of size~$k$.
    Thus, some set~$B \subseteq M$ of size~$k$ satisfies $B \subseteq F_j$ for at least $m \cdot \binom{2k}{k} / \binom{8k}{k}$ indices~$j \in [m]$.
    Since $2k - i > k$ and $8k - i \leq 8k$ for every $i \in \{0, \ldots, k-1\}$, we have
    \begin{equation*}
      \binom{2k}{k} \Big/ \binom{8k}{k} = \prod_{i = 0}^{k-1} \frac{2k - i}{8k - i} > \prod_{i = 0}^{k-1} \frac{k}{8k} = 8^{-k} \enspace.
    \end{equation*}
    Thus, $B \subseteq F_j$ for at least $m \cdot 8^{-k} = k$ indices~$j \in [m]$, and the claim follows.
  \end{cproof}

  By \zcref{clm:common-subset} applied to $F_{P_1}, \ldots, F_{P_m}$, there are sets~$B \subseteq M$ and $I \subseteq [m]$ of size~$k$ such that $B \subseteq F_{P_j}$ for every $j \in I$.
  For $j \in I$, the vertex~$a_{P_j}$ is complete to~$B$ and~$B$ is complete to~$c_{P_j}$, and $\{c_{P_j} \mid j \in I\}$ is matched to $\{a_{P_j} \mid j \in I\}$ by the arcs~$(c_{P_j}, a_{P_j})$ of~$D$.
  Thus $(\{a_{P_j} \mid j \in I\}, B, \{c_{P_j} \mid j \in I\})$ is a $k$-triple of~$D$, which we output.
  See \zcref{fig:triples-extraction}.

  The run time follows from the bounds below, where $m' \leq J$ and $m^2 \leq m \cdot L \cdot 2^L \leq n_0 \leq J$.
  By \zcref{lem:semicomplete-from-h-semicomplete}, we find~$Z'$ in $\Ocal(n_0 \cdot J)$ time.
  By \zcref{lem:sandwich}, we find~$M$ in $\Ocal(L^2 \cdot n_0^3) = \Ocal(k^2 \cdot J^3)$ time.
  For the~$m$ families~$\mathcal{P}_j$, the good paths, the indices~$i_P$, and the sets~$F_P$ are computed in $\Ocal(mJL)$ time.
  In the first case, the set~$B$ is found in $\Ocal(m' \cdot L \cdot 2^L) = \Ocal(J^2)$ time by checking all subsets of~$M$ of size~$2k$.
  In the second case, the paths~$P_j$ are chosen in $\Ocal(m^2 J)$ time by comparing~$a_P$ and~$c_P$ with the at most $4m$ vertices of~$X_j$.
  The sets~$B$ and~$I$ are found in $\Ocal(m \cdot L \cdot 2^L)$ time by checking all subsets of~$M$ of size~$k$.
  The total run time is $\Ocal(k^2 \cdot J^3)$, and the result follows.
\end{proof}

\begin{figure}[ht]
  \centering
  \begin{tikzpicture}[yscale=.6]
  \foreach \j/\y in {1/1, 2/2} {
      \node[blackvertex, label={[xshift=-.1cm]180:{$b_\j$}}] (b\j) at (-2, \y) {};
    }
  \node[blackvertex, label={[yshift=.1cm]90:{$v_1$}}] (v1) at (0, 6.2) {};
  \node[blackvertex, label={[xshift=-.05cm, yshift=.05cm]135:{$c_{P_1}$}}] (c1) at (0, 3.6) {};
  \node[blackvertex, label={[xshift=-.05cm, yshift=-.05cm]-135:{$a_{P_1}$}}] (a1) at (0, -.6) {};
  \node[blackvertex, label={[yshift=-.1cm]-90:{$w_1$}}] (w1) at (0, -3.2) {};
  \node[blackvertex, label={[yshift=.1cm]90:{$v_2$}}] (v2) at (1.8, 3.6) {};
  \node[blackvertex, label={[xshift=.05cm, yshift=-.05cm]-45:{$a_{P_2}$}}] (a2) at (1.8, -.6) {};
  \node[blackvertex, label={[yshift=-.1cm]-90:{$w_2$}}] (w2) at (1.8, -3.2) {};
  \draw[arrow] (v1) -- (c1);
  \draw[arrow, very thick] (c1) -- (a1);
  \draw[arrow] (a1) -- (w1);
  \draw[arrow, very thick] (v2) -- (a2);
  \draw[arrow] (a2) -- (w2);
  \foreach \j in {1, 2} {
      \draw[arrow, opacity=.5] (a1) -- (b\j);
      \draw[arrow, opacity=.5] (a2) -- (b\j);
      \draw[arrow, opacity=.5] (b\j) -- (c1);
      \draw[arrow, opacity=.5] (b\j) -- (v2);
    }
  \node[rectangle, draw, fit=(b1)(b2), label=90:{$B$}] {};
  \node[rectangle, draw, fit=(c1)(v2), label=0:{$C$}] {};
  \node[rectangle, draw, fit=(a1)(a2), label=-90:{$A$}] {};
\end{tikzpicture}%
%
  \caption{The $2$-triple~$(\{a_{P_1}, a_{P_2}\}, B, \{c_{P_1}, v_2\})$ built in the second case of the proof of \zcref{lem:jungle-extraction} from the paths~$P_1 = (v_1, c_{P_1}, a_{P_1}, w_1)$ and $P_2 = (v_2, a_{P_2}, w_2)$, which go from $v_1, v_2 \in Z^+$ to $w_1, w_2 \in Z^-$. Here $i_{P_1} = 2$ and $i_{P_2} = 1$, so $c_{P_2} = v_2$. The set~$B = \{b_1, b_2\}$ is a subset of $F_{P_1} \cap F_{P_2}$. Thick arcs form the matching, thin black arcs are the other arcs of~$P_1$ and~$P_2$, gray arcs join~$A$ to~$B$ and~$B$ to~$C$, and all other arcs are omitted.}
  \label{fig:triples-extraction}
\end{figure}

In the second output of \zcref{lem:jungle-extraction} with $S = D^\star$, every arc~$(c_j, a_j)$ joins non-neighbors of~$D$ and is an arc of~$D^\star$, so $|\detours{D}{c_j}{a_j}| \geq |\detours{D}{a_j}{c_j}|$ by \zcref{def:detours-and-completions}.
The next lemma uses the detours from~$c_j$ to~$a_j$ to replace~$a_j$ or~$c_j$ in a triple whose middle set is half of~$B$.
Either the arcs from the vertices~$c_j$ to the vertices~$a_i$ contain the matching of a $k$-triple, or by Kőnig's Theorem (\zcref{thm:konig}) fewer than~$k$ vertices meet all of them.
In the second case, we show that many of the remaining pairs have many detours from~$c_j$ to~$a_j$.
If not, then by \zcref{obs:out-degree-h-semicomplete} some~$a_j$ of a pair with few detours has many out-neighbors~$a_i$ among these pairs.
All but at most~$h$ of them lie in $\detours{D}{a_j}{c_j}$, as no arc goes from~$c_j$ to them, so the hypothesis gives at least as many detours from~$c_j$ to~$a_j$, a contradiction.
See \zcref{fig:triples-detour-pairs}.

\begin{lemma}\label{lem:detour-pairs}
  Let~$k$ be a positive integer,~$h$ be a non-negative integer, and $\fref{lem:detour-pairs}(k, h) = 3k \cdot \binom{2k}{k} + (4k + 3)h + k$.
  Let~$D$ be an \hsemi{h} digraph on~$n$ vertices and $B \subseteq V(D)$ be a set of size~$2k$.
  Let $(a_1, c_1), \ldots, (a_s, c_s)$ be pairs of vertices of~$D$, with $s \geq \fref{lem:detour-pairs}(k, h)$, whose $2s$ vertices are distinct and outside~$B$.
  Assume that, for every $j \in [s]$, the vertex~$a_j$ is complete to~$B$,~$B$ is complete to~$c_j$, and $|\detours{D}{c_j}{a_j}| \geq |\detours{D}{a_j}{c_j}|$.
  Then~$D$ contains a $k$-triple, which can be found in $\Ocal(k \cdot s^2 + sn)$ time.
\end{lemma}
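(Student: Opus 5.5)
The plan is to first look for $k$ disjoint arcs of~$D$ going from the vertices~$c_j$ to the vertices~$a_i$; if there are none, use the hypothesis $|\detours{D}{c_j}{a_j}| \geq |\detours{D}{a_j}{c_j}|$ to show that most pairs have many detours from~$c_j$ to~$a_j$, and then replace one end of each such pair by a detour vertex.

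\emph{Matching case.} Let~$G$ be the bipartite graph with sides $\{c_1, \ldots, c_s\}$ and $\{a_1, \ldots, a_s\}$, where $c_i a_j$ is an edge whenever $(c_i, a_j) \in E(D)$; this includes $i = j$. Run augmenting paths $k$ times (after \zcref{thm:konig}). Suppose this gives a matching $c_{i_1}a_{j_1}, \ldots, c_{i_k}a_{j_k}$. Every~$a_j$ is complete to~$B$ and~$B$ is complete to every~$c_i$. All $2s$ vertices are distinct and outside~$B$. So for any $k$-subset~$B'$ of~$B$, the triple $(\{a_{j_\ell}\}, B', \{c_{i_\ell}\})$ is a $k$-triple of~$D$.

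Otherwise we get a vertex cover~$Q$ of~$G$ with $|Q| \leq k-1$. Discard every pair with an end in~$Q$. At least $s - (k-1)$ pairs remain, and among them $(c_i, a_j) \notin E(D)$ for all~$i, j$.

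\emph{Few poor pairs.} Fix a threshold~$t$, to be chosen below. Call a remaining pair \emph{poor} if $|\detours{D}{c_j}{a_j}| < t$, and let~$R$ be the set of poor pairs. Apply \zcref{obs:out-degree-h-semicomplete} to the \hsemi{h} digraph $D[\{a_j : j \in R\}]$. It gives some $j \in R$ such that~$a_j$ has at least $(|R| - 1 - h)/2$ out-neighbors~$a_i$ with $i \in R$.

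Consider such an out-neighbor~$a_i$ that is adjacent to~$c_j$. Since $(c_j, a_i) \notin E(D)$, we must have $(a_i, c_j) \in E(D)$, so $a_i \in \detours{D}{a_j}{c_j}$. At most~$h$ of these out-neighbors are non-neighbors of~$c_j$. Hence
\[
  t > |\detours{D}{c_j}{a_j}| \geq |\detours{D}{a_j}{c_j}| \geq (|R| - 1 - h)/2 - h ,
\]
which gives $|R| < 2t + 3h + 1$. So at least $s - (k-1) - 2t - 3h - 1$ pairs are \emph{rich}, meaning they have at least~$t$ detours from~$c_j$ to~$a_j$.

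\emph{Stand-ins and labels.} Let~$u$ be the middle vertex of a detour from~$c_j$ to~$a_j$, chosen outside~$B$ and adjacent to every vertex of~$B$. Since $|B| = 2k$, either~$u$ is complete to some $k$-set $B_1 \subseteq B$, or some $k$-set~$B_1$ is complete to~$u$.

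In the first case,~$u$ can replace~$a_j$: the arc $(c_j, u)$ is a matching arc, $u$ is complete to~$B_1$, and~$B_1$ is complete to~$c_j$. In the second case,~$u$ replaces~$c_j$, with matching arc $(u, a_j)$.

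Give each processed pair the label (orientation, $B_1$). There are at most $2\binom{2k}{k}$ labels.

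\emph{Greedy selection.} Process the rich pairs greedily and keep a selected family. Skip a pair if one of its ends is an already chosen stand-in; each stand-in kills at most one pair, since the pair vertices are distinct. For a pair that is not skipped, pick a stand-in~$u$ in $\detours{D}{c_j}{a_j}$ avoiding the following:
\begin{itemize}
  \item the set~$B$;
  \item the at most $2kh$ non-neighbors of~$B$;
  \item the ends and stand-ins of the pairs selected so far.
\end{itemize}
Stop as soon as some label has~$k$ selected pairs. Before stopping, at most $2\binom{2k}{k}(k-1)$ pairs are selected, so the forbidden set has size at most $2k + 2kh + 6k\binom{2k}{k}$. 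Taking~$t$ just above this bound ensures the stand-in always exists.

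When a label reaches~$k$ pairs, the selected pairs give the triple. For label (out, $B_1$) it is $(\{u_j\}, B_1, \{c_j\})$; for label (in, $B_1$) it is $(\{a_j\}, B_1, \{u_j\})$. All vertices are pairwise distinct and outside~$B_1$.

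\emph{Main obstacle.} The main difficulty I expect is the bookkeeping. The counts (discarded pairs, poor pairs, skipped pairs, and the stand-in exclusions) have to be balanced so that the total fits within $s \geq 3k\binom{2k}{k} + (4k+3)h + k$. If the greedy over all labels is too costly, I would first fix the most popular label among the rich pairs by pigeonhole and then select only within that label. In that case the exclusions from earlier selections are $O(k)$ rather than $O(k\binom{2k}{k})$, and~$t$ can be taken to be about $2k(h+1) + 3k$.

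\emph{Running time.} Each rich pair needs a scan of $\detours{D}{c_j}{a_j}$ in $O(n)$ time. The augmenting-path step takes $O(k s^2)$ time. This gives $O(k s^2 + sn)$ in total.
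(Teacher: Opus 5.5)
\textbf{There is a genuine gap: the counting does not reach the stated threshold.} Your architecture is the paper's: the K\H{o}nig step, bounding poor pairs via \zcref{obs:out-degree-h-semicomplete}, stand-ins taken from $\detours{D}{c_j}{a_j}$ that are adjacent to all of~$B$, and a pigeonhole over $k$-subsets of~$B$. But the statement fixes $\fref{lem:detour-pairs}(k,h) = 3k\binom{2k}{k} + (4k+3)h + k$, and your count does not reach it.

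In your greedy, $t$ must exceed roughly $2k + 2kh + 6(k-1)\binom{2k}{k}$. Your own bound then permits up to $2t + 3h > 12(k-1)\binom{2k}{k} + (4k+3)h + 4k$ poor pairs. For $k \geq 2$ this already exceeds $s = \fref{lem:detour-pairs}(k,h)$. Meanwhile you still need about $4(k-1)\binom{2k}{k}$ rich pairs, counting selected and skipped ones. So your lower bound on rich pairs is negative.

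The fallback does not repair this. A label is a property of the chosen stand-in, not of the pair, and different pairs compete for the same detour vertices. After fixing a popular label you still need $k$ pairwise distinct stand-ins that all carry that label. That requires each pair to have at least~$k$ admissible detour vertices of that label, so $t - 2k - 2kh$ must be of order $k$ times the number of labels. With $t \approx 2k(h+1) + 3k$, nothing in your argument rules out all rich pairs sharing the same roughly $3k$ admissible detour vertices, each with its own disjoint set of labels. This is possible since $\binom{2k}{k} \geq 3k$ for $k \geq 2$. Then no two pairs can share a label.

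\textbf{What the paper uses to get its constants.}
\begin{enumerate}
\item[(i)] After discarding the pairs hit by the vertex cover, for all surviving $i, j$ neither $a_i$ nor $c_i$ lies in $\detours{D}{c_j}{a_j}$. Otherwise $(c_j, a_i) \in E(D)$ or $(c_i, a_j) \in E(D)$, which the cover excludes. So stand-ins never collide with pair ends. You need neither skipping nor excluding ends; you exclude only~$B$, the at most $2kh$ non-neighbors of~$B$, and earlier stand-ins.
\item[(ii)] The orientation need not be part of the label. For a common $B'$, pairs contributing $(u_t, c_{j_t})$ and pairs contributing $(a_{j_t}, u_t)$ can be mixed in one triple. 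So only $\binom{2k}{k}$ labels are needed, and $R = k\binom{2k}{k}$ stand-ins suffice.
\item[(iii)] In the poor-pair count, $B \subseteq \detours{D}{a_j}{c_j}$ and $B$ is disjoint from the $a_i$. This gives an extra $2k$, hence at most $2q - 4k + 3h$ poor pairs for $q = 2kh + 2k + R$.
\end{enumerate}
The paper then greedily picks distinct stand-ins for $R$ rich pairs and afterwards pigeonholes on $B'_t$. The inequality $s - 2R - (4k+3)h - k + 1 \geq R + 1$ is exactly where $\fref{lem:detour-pairs}(k,h)$ comes from.
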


\begin{figure}[ht]
  \centering
  \begin{tikzpicture}[yscale=.6]
  \foreach \j/\y in {1/1, 2/2} {
      \node[blackvertex, label={[xshift=-.1cm]180:{$b_\j$}}] (b\j) at (-2, \y) {};
    }
  \node[blackvertex, label={[yshift=.1cm]90:{$c_1$}}] (c1) at (0, 3.6) {};
  \node[blackvertex, label={[xshift=-.05cm, yshift=.05cm]135:{$u_2$}}] (u2) at (1.2, 3.6) {};
  \node[blackvertex, label={[xshift=.05cm, yshift=-.05cm]-45:{$u_1$}}] (u1) at (0, -.6) {};
  \node[blackvertex, label={[yshift=-.1cm]-90:{$a_2$}}] (a2) at (1.2, -.6) {};
  \node[vertex, faded, label={[yshift=-.1cm, gray]-90:{$a_1$}}] (a1) at (-1.2, -2.4) {};
  \node[vertex, faded, label={[yshift=.1cm, gray]90:{$c_2$}}] (c2) at (2.4, 5.4) {};
  \foreach \j in {1, 2} {
      \draw[arrow, opacity=.5] (u1) -- (b\j);
      \draw[arrow, opacity=.5] (a2) -- (b\j);
      \draw[arrow, opacity=.5] (b\j) -- (c1);
      \draw[arrow, opacity=.5] (b\j) -- (u2);
    }
  \draw[arrow, very thick] (c1) -- (u1);
  \draw[arrow, very thick] (u2) -- (a2);
  \draw[arrow] (u1) -- (a1);
  \draw[arrow] (c2) -- (u2);
  \node[rectangle, draw, fit=(b1)(b2), label=90:{$B'$}] {};
  \node[rectangle, draw, fit=(c1)(u2), label=0:{$C$}] {};
  \node[rectangle, draw, fit=(u1)(a2), label=0:{$A$}] {};
\end{tikzpicture}%
%
  \caption{Building a $2$-triple in the proof of \zcref{lem:detour-pairs}. The vertex~$u_1 \in \detours{D}{c_1}{a_1}$ is complete to~$B'$ and replaces~$a_1$, and~$B'$ is complete to $u_2 \in \detours{D}{c_2}{a_2}$, which replaces~$c_2$. Thick arcs form the matching, and the arcs between~$B'$ and the replaced vertices~$a_1$ and~$c_2$ are omitted.}
  \label{fig:triples-detour-pairs}
\end{figure}

\begin{proof}
  For $j \in [s]$, let $Q_j = \detours{D}{c_j}{a_j}$, so that $(c_j, u), (u, a_j) \in E(D)$ for every $u \in Q_j$.
  Let $R = k \cdot \binom{2k}{k}$ and $q = 2kh + 2k + R$.

  Let~$G$ be the bipartite graph with bipartition~$(\{a_1, \ldots, a_s\}, \{c_1, \ldots, c_s\})$ and an edge between~$a_i$ and~$c_j$ whenever $(c_j, a_i) \in E(D)$.
  If~$G$ has a matching of size~$k$, let~$A'$ and~$C'$ be the sets of its ends among the vertices~$a_i$ and among the vertices~$c_j$, respectively.
  Let $B' \subseteq B$ be a set of size~$k$.
  Then~$C'$ is matched to~$A'$ by the arcs~$(c_j, a_i)$ corresponding to the edges of the matching, and $(A', B', C')$ is a $k$-triple of~$D$.
  Otherwise, by \zcref{thm:konig},~$G$ has a vertex cover~$W$ of size at most $k - 1$.
  Let~$I$ be the set of indices~$j \in [s]$ with $a_j, c_j \notin W$.
  Each vertex of~$W$ is~$a_j$ or~$c_j$ for exactly one index~$j$, so $|I| \geq s - k + 1$.
  For all $i, j \in I$, we have $(c_j, a_i) \notin E(D)$, as otherwise~$W$ would not cover the edge between~$a_i$ and~$c_j$.
  Thus, for all $i, j \in I$, the vertex~$a_i$ is not in~$Q_j$, and~$c_i$ is not in~$Q_j$ since $(c_i, a_j) \notin E(D)$.

  Now, let~$I'$ be the set of indices~$j \in I$ with $|Q_j| \geq q$.
  We claim that $|I \setminus I'| \leq 2q - 4k + 3h$.
  By contradiction, assume that $I'' = I \setminus I'$ has $N \geq 2q - 4k + 3h + 1$ indices.
  The digraph~$D[\{a_i \mid i \in I''\}]$ is \hsemi{h}, so by \zcref{obs:out-degree-h-semicomplete} some~$a_j$ with $j \in I''$ has at least $(N - 1 - h)/2$ out-neighbors~$a_i$ with $i \in I''$.
  For each such~$i$, we have $(c_j, a_i) \notin E(D)$, so if~$a_i$ and~$c_j$ are adjacent, then $(a_i, c_j) \in E(D)$ and $a_i \in \detours{D}{a_j}{c_j}$.
  Since~$c_j$ has at most~$h$ non-neighbors, at least $(N - 1 - h)/2 - h$ of these vertices~$a_i$ belong to $\detours{D}{a_j}{c_j}$.
  Also, $B \subseteq \detours{D}{a_j}{c_j}$, as~$a_j$ is complete to~$B$ and~$B$ is complete to~$c_j$, and~$B$ contains no vertex~$a_i$.
  Thus $|Q_j| \geq |\detours{D}{a_j}{c_j}| \geq 2k + (N - 1 - 3h)/2 \geq q$, so $j \in I'$, a contradiction.
  Therefore,
  \begin{equation*}
    |I'| \geq |I| - (2q - 4k + 3h) \geq s - k + 1 - 2q + 4k - 3h = s - 2R - (4k + 3)h - k + 1 \geq R + 1 \enspace.
  \end{equation*}

  Now, let $j_1, \ldots, j_R$ be distinct indices of~$I'$.
  At most $2kh$ vertices are non-neighbors of some vertex of~$B$, since each of the $2k$ vertices of~$B$ has at most~$h$ non-neighbors.
  Thus, for $t = 1, \ldots, R$ in turn, we can choose a vertex~$u_t \in Q_{j_t} \setminus B$ that is adjacent to every vertex of~$B$ and distinct from $u_1, \ldots, u_{t-1}$, since at most $2kh + 2k + R - 1 < q \leq |Q_{j_t}|$ vertices are excluded.
  As~$u_t$ is adjacent to all $2k$ vertices of~$B$, there is a set~$B'_t \subseteq B$ of size~$k$ such that~$u_t$ is complete to~$B'_t$ or~$B'_t$ is complete to~$u_t$.
  Since~$B$ has $\binom{2k}{k}$ subsets of size~$k$ and $R = k \cdot \binom{2k}{k}$, there are a set~$B' \subseteq B$ and a set~$Y \subseteq [R]$ of size~$k$ with $B'_t = B'$ for every $t \in Y$.
  For $t \in Y$, let $(\alpha_t, \gamma_t) = (u_t, c_{j_t})$ if~$u_t$ is complete to~$B'$, and $(\alpha_t, \gamma_t) = (a_{j_t}, u_t)$ otherwise, in which case~$B'$ is complete to~$u_t$.
  In both cases,~$\alpha_t$ is complete to~$B'$,~$B'$ is complete to~$\gamma_t$, and $(\gamma_t, \alpha_t) \in E(D)$ since $u_t \in Q_{j_t}$.
  The vertices~$u_t$ with $t \in Y$ are pairwise distinct, outside~$B$, and, as shown above, distinct from all vertices~$a_i$ and~$c_i$ with $i \in I$.
  Thus the $2k$ vertices~$\alpha_t$ and~$\gamma_t$ with $t \in Y$ are distinct and outside~$B'$, and we conclude that $(\{\alpha_t \mid t \in Y\}, B', \{\gamma_t \mid t \in Y\})$ is a $k$-triple of~$D$.

  The run time follows from the following bounds.
  As described in \zcref{sec:prelim}, the matching or the vertex cover of~$G$ is found in $\Ocal(k \cdot s^2)$ time.
  The sets~$Q_j$ with $j \in I$ and their sizes are computed in $\Ocal(sn)$ time, and we mark the vertices adjacent to every vertex of~$B$ in $\Ocal(kn)$ time.
  Then the vertices~$u_1, \ldots, u_R$ are found in $\Ocal(R \cdot n)$ time, and the sets~$B'_t$ and~$Y$ in $\Ocal(k \cdot R^2)$ time by comparing the sets~$B'_t$ pairwise.
  Since $k \leq R \leq s$, the total run time is $\Ocal(k \cdot s^2 + sn)$, and the result follows.
\end{proof}

We are ready to prove the main result of this section.

\begin{theorem}\label{thm:large-dpw-to-triples}
  Let~$k$ be a positive integer,~$h$ be a non-negative integer, and $\fref{thm:large-dpw-to-triples}(k, h) = (h+1) \cdot k \cdot 2^{27k + 9}$.
  Every \hsemi{h} digraph~$D$ with $\dpw(D) > \fref{thm:large-dpw-to-triples}(k, h)$ contains a $k$-triple.
  There is an algorithm with run time~$\min\{{(h+1)}^3 \cdot 2^{\Ocal(k)} \cdot n^2, \Ocal(n^5)\}$ that, given~$k$ and an \hsemi{h} digraph~$D$ on~$n$ vertices, outputs a path decomposition of~$D$ of width at most $\fref{thm:large-dpw-to-triples}(k, h)$ or a $k$-triple of~$D$.
\end{theorem}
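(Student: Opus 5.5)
The plan is to combine \zcref{prop:semicomplete-jungle-algorithm}, \zcref{lem:jungle-extraction} and \zcref{lem:detour-pairs} on the detour completion~$D^\star$. Set $s = \fref{lem:detour-pairs}(k, h)$ and $K = \fref{lem:jungle-extraction}(k, h, s) = (h+1) \cdot k \cdot 2^{27k+1} + 2^{8k} \cdot s$. First compute~$D^\star$ in $\Ocal((h+1) \cdot n^2)$ time, as explained after \zcref{prop:semicomplete-jungle-algorithm}. Then run the algorithm of \zcref{prop:semicomplete-jungle-algorithm} on the semicomplete digraph~$D^\star$ with parameter~$K$. It returns either a path decomposition of~$D^\star$ of width at most $140K$ or a $(K, 4)$-short jungle~$Z$ of~$D^\star$. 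In the first case, \zcref{obs:dpw-subgraph} shows this is also a path decomposition of~$D$. It then suffices to check that $140K \le \fref{thm:large-dpw-to-triples}(k, h) = (h+1) \cdot k \cdot 2^{27k+9}$. The existential statement then follows immediately: if $\dpw(D) > \fref{thm:large-dpw-to-triples}(k,h)$, the first outcome is impossible.

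In the jungle case, apply \zcref{lem:jungle-extraction} with $S = D^\star$. Its first output is already a $k$-triple of~$D$. Its second output is a set~$B$ of size~$2k$ and~$s$ disjoint pairs~$(a_j, c_j)$ of non-neighbors of~$D$, outside~$B$, with $(c_j, a_j) \in E(D^\star)$, $a_j$ complete to~$B$, and $B$ complete to~$c_j$. Since $(c_j,a_j)$ joins non-neighbors and was added by the detour completion, \zcref{def:detours-and-completions} gives $|\detours{D}{c_j}{a_j}| \ge |\detours{D}{a_j}{c_j}|$. These are exactly the hypotheses of \zcref{lem:detour-pairs}, which then yields a $k$-triple of~$D$.

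The main obstacle is the arithmetic check $140K \le (h+1) k 2^{27k+9}$. The first term gives $140 \cdot (h+1) k 2^{27k+1} \le (h+1)k 2^{27k+9} \cdot (140/256)$. That leaves a slack of $116/256 \cdot (h+1)k2^{27k+9}$ for $140 \cdot 2^{8k} s$. For the second term, bound $s \le 3k \cdot 4^k + (4k+3)h + k \le (h+1) \cdot k \cdot 2^{2k+3}$ (using $\binom{2k}{k} \le 4^k$), so that $140 \cdot 2^{8k} s \le (h+1)k\, 2^{10k+11}$. This is below $116 \cdot (h+1)k2^{27k+1}$ for all $k \ge 1$, since $2^{10k+11} \le 2^{27k+6} \cdot$ const whenever $17k \ge 5$. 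I would verify these constants carefully, but the exponents leave ample room.

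For the running time, \zcref{prop:semicomplete-jungle-algorithm} costs $\Ocal(K^3 n^2)$ and \zcref{lem:jungle-extraction} costs $\Ocal(k^2 K^3)$, using $|Z| = K$ after discarding vertices. \zcref{lem:detour-pairs} costs $\Ocal(k s^2 + sn)$. With $K = (h+1) 2^{\Ocal(k)}$, this totals ${(h+1)}^3 2^{\Ocal(k)} n^2$. For the $\Ocal(n^5)$ bound, note that if $K > n$ there is no jungle of size~$K$, so the algorithm outputs a decomposition, whose width bound is automatically fine. Also, we may compute $\dpw$-bounds trivially: a single bag of width $n - 1$ suffices when $\fref{thm:large-dpw-to-triples}(k,h) \ge n-1$. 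Otherwise $K \le n$, and each step is at most $\Ocal(n^5)$, using $k \le \log n$ to absorb the $k^2$ factor. Taking the minimum of the two analyses gives the claimed bound.
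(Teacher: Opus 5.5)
Your proposal is correct and follows the paper's proof essentially step for step: the detour completion, then \zcref{prop:semicomplete-jungle-algorithm}, \zcref{lem:jungle-extraction} and \zcref{lem:detour-pairs}. It also uses the same arithmetic check $140J \leq (h+1) \cdot k \cdot 2^{27k+9}$ and the same single-bag shortcut for small $n$ to get the $\Ocal(n^5)$ bound. The differences are cosmetic: the paper runs the jungle algorithm with $K = \lfloor (h+1) \cdot k \cdot 2^{27k+9}/140 \rfloor \geq J$ and trims the jungle, and since the algorithm is given only $k$ and $D$, it first computes the actual maximum number $h' \leq h$ of non-neighbors of a vertex, a step you should add if $h$ is not part of the input (your remark about $K > n$ is not needed once the shortcut is in place).
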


\begin{proof}
  Let $s = \fref{lem:detour-pairs}(k, h)$, $J = \fref{lem:jungle-extraction}(k, h, s)$, and $K = \lfloor \fref{thm:large-dpw-to-triples}(k, h) / 140 \rfloor$.
  We first show that $J \leq K$.
  As $\binom{2k}{k} \leq 4^k$ and $(4k + 3)h + k \leq (h+1)(4k + 3)$, we have $s \leq (h+1)(3k \cdot 4^k + 4k + 3) \leq (h+1) \cdot k \cdot 2^{2k + 4}$, where the last inequality uses $4k + 3 \leq 13k \cdot 4^k$.
  Thus,
  \begin{align*}
    140J & = 140(h+1) \cdot k \cdot 2^{27k + 1} + 140 \cdot 2^{8k} \cdot s \\
          & \leq (h+1) \cdot k \cdot (280 \cdot 2^{27k} + 140 \cdot 2^{10k + 4}) \leq (h+1) \cdot k \cdot 2^{27k + 9} \enspace,
  \end{align*}
  where the last inequality uses $140 \cdot 2^{10k + 4} \leq 232 \cdot 2^{27k}$, which gives $J \leq K$.

  Given~$D$, we first compute in $\Ocal(n^2)$ time the largest number~$h' \leq h$ of non-neighbors of a vertex of~$D$, and we replace~$h$ by~$h'$, which only decreases the bounds of the statement.
  Then, if $n \leq \fref{thm:large-dpw-to-triples}(k, h) + 1$, we output the path decomposition~$(V(D))$ of~$D$ with a single bag, whose width is $n - 1 \leq \fref{thm:large-dpw-to-triples}(k, h)$.
  Otherwise, we compute the detour completion~$D^\star$ of~$D$ in $\Ocal((h+1) \cdot n^2)$ time, and we run the algorithm of \zcref{prop:semicomplete-jungle-algorithm} on~$D^\star$ with~$K$, which takes $\Ocal(K^3 \cdot n^2)$ time.
  If it outputs a path decomposition of~$D^\star$, then its width is at most $140K \leq \fref{thm:large-dpw-to-triples}(k, h)$, it is a path decomposition of~$D$ by \zcref{obs:dpw-subgraph}, and we output it.
  Otherwise, it outputs a $(K, 4)$-short jungle~$Z$ of~$D^\star$.
  Since $K \geq J$, the set~$Z$ with~$J$ of the given paths for every pair of its vertices is a $(J, 4)$-short jungle of~$D^\star$, and we apply the algorithm of \zcref{lem:jungle-extraction} to~$D$ and~$Z$ with $S = D^\star$.
  If it outputs a $k$-triple of~$D$, we output it.
  Otherwise, it outputs a set~$B$ of size~$2k$ and pairs~$(a_1, c_1), \ldots, (a_s, c_s)$ as in its second item.
  For every $j \in [s]$, the vertices~$a_j$ and~$c_j$ are non-neighbors in~$D$ and $(c_j, a_j)$ is an arc of~$D^\star$, so $|\detours{D}{c_j}{a_j}| \geq |\detours{D}{a_j}{c_j}|$ by the definition of~$D^\star$.
  Thus, \zcref{lem:detour-pairs} applies to~$D$,~$B$, and these pairs, and we output the $k$-triple of~$D$ that it finds.
  The first statement follows, since if $\dpw(D) > \fref{thm:large-dpw-to-triples}(k, h)$, then~$D$ has no path decomposition of width at most $\fref{thm:large-dpw-to-triples}(k, h)$, so the algorithm outputs a $k$-triple of~$D$.

  For the run time, the steps above take $\Ocal((h+1) \cdot n^2 + K^3 \cdot n^2 + k^2 \cdot J^3 + k \cdot s^2 + sn)$ time.
  Since $h + 1 \leq s \leq J \leq K$ and $K = (h+1) \cdot 2^{\Ocal(k)}$, this is ${(h+1)}^3 \cdot 2^{\Ocal(k)} \cdot n^2$.
  Moreover, these steps run only when $n > \fref{thm:large-dpw-to-triples}(k, h) + 1 > 140K$, in which case~$k$, $h + 1$,~$s$,~$J$, and~$K$ are all less than~$n$, so the run time is also $\Ocal(n^5)$, and the result follows.
\end{proof}

For every~$k$, every semicomplete digraph with a large enough jungle contains a $k$-triple, a result that \textcite[Section 1 and (2.6)]{FradkinS13} credit to Chudnovsky, Scott, and Seymour.
The analogue for \hsemi{h} digraphs follows directly.
Indeed, let~$k$ be a positive integer and~$h$ be a non-negative integer.
By \zcref{lem:jungle-dpw}, an \hsemi{h} digraph that contains a jungle of size~$\fref{thm:large-dpw-to-triples}(k, h) + 2$ has directed pathwidth greater than $\fref{thm:large-dpw-to-triples}(k, h)$, so it contains a $k$-triple by \zcref{thm:large-dpw-to-triples}, and thus we obtain the following.

\begin{corollary}\label{cor:jungles-to-triples}
  Let~$k$ be a positive integer,~$h$ be a non-negative integer, and $\fref{cor:jungles-to-triples}(k, h) = \fref{thm:large-dpw-to-triples}(k, h) + 2$.
  Every \hsemi{h} digraph that contains a jungle of size~$\fref{cor:jungles-to-triples}(k, h)$ contains a $k$-triple.
\end{corollary}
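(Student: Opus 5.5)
The plan is a direct combination of \zcref{lem:jungle-dpw} with \zcref{thm:large-dpw-to-triples}, so essentially no new work is needed beyond checking the arithmetic of the threshold.

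Let~$D$ be an \hsemi{h} digraph containing a jungle~$U$ of size $t = \fref{cor:jungles-to-triples}(k, h) = \fref{thm:large-dpw-to-triples}(k, h) + 2$. First, I invoke \zcref{lem:jungle-dpw}, which holds for arbitrary digraphs and makes no use of semicompleteness. It gives $\dpw(D) \geq t - 1 = \fref{thm:large-dpw-to-triples}(k, h) + 1$, and hence $\dpw(D) > \fref{thm:large-dpw-to-triples}(k, h)$. Second, since~$D$ is \hsemi{h}, the first statement of \zcref{thm:large-dpw-to-triples} applies and yields a $k$-triple of~$D$.

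The only point needing care is the off-by-one in the threshold. \zcref{lem:jungle-dpw} gives width at least $t-1$, and \zcref{thm:large-dpw-to-triples} needs width strictly greater than $\fref{thm:large-dpw-to-triples}(k,h)$. The additive $+2$ in $\fref{cor:jungles-to-triples}$ makes these match: $t-1 = \fref{thm:large-dpw-to-triples}(k,h)+1 > \fref{thm:large-dpw-to-triples}(k,h)$. There is no genuine obstacle.

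The argument is also algorithmic. Running the algorithm of \zcref{thm:large-dpw-to-triples} on~$D$ cannot return a path decomposition of width at most $\fref{thm:large-dpw-to-triples}(k,h)$, so it returns the triple. This holds even if the jungle itself is not given to us.
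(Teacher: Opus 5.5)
Your proof is correct and is the same as the paper's: \zcref{lem:jungle-dpw} gives $\dpw(D) \geq \fref{thm:large-dpw-to-triples}(k, h) + 1 > \fref{thm:large-dpw-to-triples}(k, h)$, and then \zcref{thm:large-dpw-to-triples} yields the $k$-triple. Your off-by-one check and your remark that the algorithm finds the triple without being given the jungle are both right.
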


\subsection{Rooted Immersion}\label{sec:rooted-immersion}

Given a digraph~$H$ and a semicomplete digraph, \textsc{Rooted Immersion} asks for an injective map from $V(H)$ to the vertices of the semicomplete digraph that sends prescribed roots to prescribed roots, together with pairwise arc-disjoint paths joining the images of the ends of every arc of~$H$.
The proof of~\mbox{\textcite[Theorem~9.9]{FominP19}}, with \zcref{cor:jungles-to-triples} for $h = 0$ in place of their Lemma~3.6, gives an algorithm for this problem on a semicomplete digraph on~$n$ vertices with run time~$\tower_2(\Ocal(|H|^2)) \cdot n^3$, where $|H| = |V(H)| + |E(H)|$.
Their rule~\mbox{\cite[Theorem 9.1]{FominP19}} that deletes a vertex without changing the answer needs a $t$-triple with \mbox{$t = 80|H|^2 + 80|H| + 5$}, so with our bound the path decompositions have width~$2^{\Ocal(|H|^2)}$.
Their dynamic programming~\cite[Theorem 10.8]{FominP19} on a path decomposition of width~$p$ takes $2^{\Ocal(|H| \cdot \log p + p \cdot |H| \cdot (\log |H| + \log p))} \cdot n$ time.
The proof of \textcite[(2.6)]{FradkinS13}, which gives the lemma replaced above~\cite[Lemma 3.6]{FominP19}, finds a $t$-triple in every jungle of size~$\tower_2(\Ramsey(2t) + \log_2 12)$, where the Ramsey number~$\Ramsey(2t)$, the least~$N$ such that every graph on~$N$ vertices has $2t$ pairwise adjacent or $2t$ pairwise non-adjacent vertices, is at most~$2^{4t}$~\cite{ErdosS35}.
With this bound in place of ours, the path decompositions have width~$\tower_3(\Ocal(|H|^2))$, and the run time is $\tower_4(\Ocal(|H|^2)) \cdot n^3$.

\subsection{Lower bounds}\label{sec:triples-lower-bounds}

The following lemma gives lower bounds on every function that can replace $\fref{thm:large-dpw-to-triples}(k, h)$ in \zcref{thm:large-dpw-to-triples}.

\begin{lemma}\label{lem:large-dpw-to-triples-tight}
  Let~$k$ be a positive integer and $n = \lfloor 2^{2(k-1)/3} \rfloor$.
  There is a tournament~$T$ on~$n$ vertices with no $k$-triple and $\dpw(T) > n - 2\sqrt{3n} - 1$, and for every non-negative integer~$h$ there is an \hsemi{h} digraph with no $k$-triple and directed pathwidth at least~$h$.
\end{lemma}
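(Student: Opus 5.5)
Two independent constructions are needed: a random tournament for the exponential lower bound in~$k$, and an explicit \hsemi{h} digraph for the linear lower bound in~$h$.

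\emph{The tournament.} Take a uniformly random tournament~$T$ on $n = \lfloor 2^{2(k-1)/3} \rfloor$ vertices. We show that with positive probability it has no $k$-triple and has large directed pathwidth.

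First, count potential triples. A $k$-triple is determined by an ordered choice of disjoint sets $A, B, C$ together with a bijection between~$C$ and~$A$, so there are at most $n^{3k}$ candidates. For a fixed candidate, the $2k^2$ arcs from~$A$ to~$B$ and from~$B$ to~$C$, together with the~$k$ matching arcs from~$C$ to~$A$, join distinct pairs of vertices. Hence the candidate is present with probability $2^{-2k^2-k}$. The expected number of $k$-triples is therefore at most $n^{3k} 2^{-2k^2-k} \leq 2^{2k(k-1) - 2k^2 - k} = 2^{-3k} < 1/2$.

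Second, bound the pathwidth. By \zcref{prop:separations-from-nice-path-decomposition}, a nice path decomposition of width~$w$ yields, for $j = \lfloor n/2 \rfloor$, a separation $(X, Y)$ with $|X| = j$ and $|X \cap Y| \leq w$. Consider $P = X \setminus Y$ and $Q = Y \setminus X$, with sizes $j - w'$ and $n - j - w'$ where $w' = |X \cap Y|$. All arcs between~$P$ and~$Q$ go from~$Q$ to~$P$. For fixed sets $P, Q$ of sizes $p$ and~$q$, this happens with probability $2^{-pq}$. A union bound over at most $3^n$ choices of $(P, Q)$ shows that, with probability above $1/2$, no such pair exists whenever $pq > n\log_2 3 + 1$. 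With $p, q \approx n/2 - w$, this forces $(n/2 - w)^2 \lesssim 1.6n$, that is, $w \geq n/2 - O(\sqrt n)$.

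This only gives half of the claimed bound $n - 2\sqrt{3n} - 1$. To get the full bound, use all separations at once. Take $j$ with $|P| = |Q|$ roughly balanced and note that $p + q = n - w'$. The union bound needs $pq \geq 3n$ or so, and $p + q = n - w$ with $p, q \geq$ something. So I will instead choose~$j$ so that the smaller of $p$ and~$q$ is about $\sqrt{3n}$. Then $pq \geq \sqrt{3n}\,(n - w - \sqrt{3n})$, and $pq > n \log_2 3 + 1$ fails only if $w > n - 2\sqrt{3n} - 1$. This gives the stated bound.

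Making the constants land exactly on $n - 2\sqrt{3n} - 1$ is the main obstacle. I will set $p = \lceil \sqrt{3n} \rceil$ and check the arithmetic carefully, treating small~$n$ (where the bound is trivial) separately.

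\emph{The \hsemi{h} digraph.} Fix an integer $m \geq 1$ and let $D$ be obtained from $h+1$ disjoint vertices by adding all arcs in both directions between them, i.e.\ take the complete symmetric digraph on $h+1$ vertices. It is \hsemi{0}, has no $k$-triple for $k \geq h+1$ since $3k > h+1$, and its directed pathwidth is~$h$, because every two vertices must share a bag.

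This fails for $k \leq h/3$. The construction should therefore instead take a bipartite-like \hsemi{h} digraph with no $k$-triple at all, for every~$k$. My choice is the disjoint union of $h+1$ copies of a symmetric digraph~$K$ on $h+1$ vertices. Then every vertex is non-adjacent only to vertices outside its copy, which is too many. So instead I take~$K$ itself with each vertex nonadjacent to at most~$h$ others. The cleanest option is a symmetric complete bipartite digraph $K_{h+1,h+1}$ with both directions: each vertex has~$h$ non-neighbors (its own side), and the directed pathwidth is at least~$h$, since the underlying undirected graph has pathwidth $h+1$ and symmetric arcs force the undirected decomposition conditions.

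For $k \geq 2$, a $k$-triple needs $A$ complete to $B$ and $B$ complete to $C$. Then $A$ and $C$ lie on the side opposite to $B$, so $A$ and $C$ are on the same side, and the matching arcs from $C$ to $A$ cannot exist. For $k = 1$, a $1$-triple is a directed triangle, which this digraph also lacks since it is bipartite. Hence there is no triple at all, and $\dpw \geq h$ follows from \zcref{lem:jungle-dpw}-type arguments or directly from the symmetric bipartite structure.
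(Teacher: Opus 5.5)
Your final argument is correct and is essentially the paper's proof. For the random tournament you use the same count of candidate triples and the same union bound over pairs of disjoint sets, and for the bound in~$h$ you use the same doubly directed $K_{h+1,h+1}$. The only variation is in the separation. You take the one with $|Y \setminus X| = n - j = \lceil \sqrt{3n}\rceil$, which is the side you control exactly, whereas the paper takes the balanced one with $|A_i| = w + \lceil (n-w)/2 \rceil$. Both give $n - w < 2\sqrt{3n} + 1$: in your version, $w \le n - 2\sqrt{3n} - 1$ forces $|X \setminus Y| \ge n - w - \lceil\sqrt{3n}\rceil \ge \sqrt{3n}$, so the product is at least $3n$.

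One correction: your first attempt gives $|Y \setminus X| = n - j - w'$, but it is $n - j$ because $X \cup Y = V(T)$. This is consistent with the relation $p + q = n - w'$ that you use afterwards.
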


\begin{proof}
  For the tournament, we count the~$2^{\binom{n}{2}}$ tournaments on a fixed set~$V$ of~$n$ vertices.
  Since $n \leq 2^{2(k-1)/3}$, we have $3k \cdot \log_2 n \leq 2k^2 - 2k$.
  There are at most~$n^{3k}$ choices of a set~$B$ of~$k$ vertices and two $k$-tuples~$(a_1, \ldots, a_k)$ and $(c_1, \ldots, c_k)$ of vertices.
  Fix such a choice.
  If its $3k$ vertices are distinct, then exactly~$2^{\binom{n}{2} - 2k^2 - k}$ tournaments~$T$ make $(\{a_1, \ldots, a_k\}, B, \{c_1, \ldots, c_k\})$ a $k$-triple with $(c_i, a_i) \in E(T)$ for every $i \in [k]$, as this prescribes the arcs between $2k^2 + k$ pairs of vertices.
  Otherwise, no tournament does.
  Every $k$-triple of a tournament arises from such a choice, so at most $n^{3k} \cdot 2^{\binom{n}{2} - 2k^2 - k} \leq 2^{\binom{n}{2} - 3k} \leq 2^{\binom{n}{2}} / 8$ tournaments contain a $k$-triple, as $n^{3k} \leq 2^{2k^2 - 2k}$.
  Also, there are at most~$3^n$ pairs of disjoint sets~$X, Y \subseteq V$, and for each such pair with $|X| \cdot |Y| \geq 3n$, exactly $2^{\binom{n}{2} - |X| \cdot |Y|} \leq 2^{\binom{n}{2} - 3n}$ tournaments have no arc from~$X$ to~$Y$.
  Thus at most ${(3/8)}^n \cdot 2^{\binom{n}{2}} < 2^{\binom{n}{2}} / 2$ tournaments have such a pair with no arc from~$X$ to~$Y$.
  Therefore, some tournament~$T$ has no $k$-triple and an arc from~$X$ to~$Y$ for all disjoint $X, Y \subseteq V$ with $|X| \cdot |Y| \geq 3n$.
  Let $w = \dpw(T)$, and assume that $w < n - 1$, as otherwise there is nothing to show.
  As explained in \zcref{sec:prelim},~$T$ has a nice path decomposition of width~$w$, so \zcref{prop:separations-from-nice-path-decomposition} gives an index~$i$ with $|A_i| = w + \lceil (n - w)/2 \rceil$ and $|A_i \cap B_i| \leq w$.
  Then $|A_i \setminus B_i| \geq \lceil (n - w)/2 \rceil$, $|B_i \setminus A_i| = n - |A_i| = \lfloor (n - w)/2 \rfloor$, and there is no arc from $A_i \setminus B_i$ to $B_i \setminus A_i$.
  By the choice of~$T$, we obtain $(n - w)(n - w - 1)/4 \leq |A_i \setminus B_i| \cdot |B_i \setminus A_i| < 3n$, so ${(n - w - 1)}^2 < 12n$ and $w > n - 2\sqrt{3n} - 1$.

  For the \hsemi{h} digraph, let~$D$ be the digraph on disjoint sets~$X$ and~$Y$ of size~$h + 1$ with both arcs between every vertex of~$X$ and every vertex of~$Y$, and no other arcs.
  Every vertex of~$D$ is non-adjacent exactly to the other~$h$ vertices of its side, so~$D$ is \hsemi{h}.
  Every $k$-triple~$(A, B, C)$ of~$D$ gives vertices~$a \in A$, $b \in B$, and $c \in C$ with $(a, b), (b, c), (c, a) \in E(D)$.
  These three vertices are pairwise adjacent, which is impossible as two of them lie on the same side.
  We conclude that~$D$ has no $k$-triple.
  For all distinct $x, x' \in X$, the paths~$(x, y, x')$ with $y \in Y$ are internally disjoint, so~$X$ is a jungle of size~$h + 1$.
  Thus $\dpw(D) \geq h$ by \zcref{lem:jungle-dpw}, and the result follows.
\end{proof}

Since $n - 2\sqrt{3n} - 1 = 2^{(2/3 - o(1))k}$ and tournaments are \hsemi{h} for every~$h$, the tournament and the \hsemi{h} digraph of \zcref{lem:large-dpw-to-triples-tight} give the lower bounds~$2^{(2/3 - o(1))k}$ and~$h$, respectively.

\raggedbottom
\section{Arc-disjoint paths in \titlemath{h}-semicomplete digraphs}
\label{sec:adp-algorithm}
In this section, we give a fixed-parameter algorithm for \ADP on \hsemi{h} digraphs parameterized by $k + h$, where~$k$ is the number of paths (\zcref{thm:adp-fpt-simplified}).
We follow the algorithm of \textcite[Theorem 9.9]{FominP19} for semicomplete digraphs, which deletes a vertex of a large triple without changing the answer~\mbox{\cite[Theorem 9.1]{FominP19}} until it finds a path decomposition of small width for its dynamic programming~\cite[Theorem 10.8]{FominP19}.
Here the path decomposition must cover the non-arcs, which leads to a new obstruction, the anti-triples of \zcref{sec:anti-triples}, and in \zcref{sec:adp-detours-irrelevant-vertices} we add an arc between two non-neighbors of a large anti-triple, which does not change the answer by a result of \textcite[(2.4)]{FradkinS15}.

\subsection{Dynamic programming over path decompositions covering the non-arcs}\label{sec:adp-dynamic-programming}

We first fix the terminology of the section, and then we solve \ADP when a path decomposition covering the non-arcs is given.

\begin{boxproblem}{Arc-Disjoint Paths ({\sc ADP})}
  \label{problem:ADP}%
  Input: & A digraph~$D$, a positive integer~$k$, and a multiset~$\mathcal{R}$ of~$k$ pairs~$(s_1, t_1), \ldots, (s_k, t_k)$ of vertices of~$D$.\\
  Goal: & Decide whether there are pairwise arc-disjoint paths~$P_1, \ldots, P_k$ in~$D$ such that~$P_i$ goes from~$s_i$ to~$t_i$ for every $i \in [k]$.
\end{boxproblem}
\noindent We call the pairs~$(s_i, t_i)$ the \emph{requests}, the vertices~$s_i$ and~$t_i$ the \emph{terminals}, and we denote by $V(\mathcal{R})$ the set of terminals.
A \emph{solution} of $(D, k, \mathcal{R})$ is a sequence~$(P_1, \ldots, P_k)$ of paths as in the goal of \ADP, and $(D, k, \mathcal{R})$ is a \emph{``yes''-instance} if it has a solution.
Two instances are \emph{equivalent} if both or neither of them are yes-instances, and a vertex~$x$ of~$D$ is \emph{irrelevant} for $(D, k, \mathcal{R})$ if $(D \setminus \{x\}, k, \mathcal{R})$ and $(D, k, \mathcal{R})$ are equivalent.
A path \emph{uses} its vertices and its arcs, a solution uses the vertices and arcs of its paths, and the arcs that a solution does not use are \emph{unused}.
Since a path visits each vertex at most once, a solution uses at most~$k$ arcs leaving any vertex and at most~$k$ arcs entering it~\cite[Observation 9.2]{FominP19}.
Finally, every walk~$(p_0, \ldots, p_r)$ \emph{contains} a path from~$p_0$ to~$p_r$ whose arcs are arcs of the walk, obtained by deleting $p_{i+1}, \ldots, p_j$ while $p_i = p_j$ for some $i < j$.

\flushbottom
An \emph{immersion} of a digraph~$H$ into a digraph~$D$ maps the vertices of~$H$ to distinct vertices of~$D$ and every arc~$(y, z)$ of~$H$ to a path of~$D$ from the image of~$y$ to the image of~$z$, such that these paths are pairwise arc-disjoint~\cite[Section 2.3]{FominP19}.
\textsc{Rooted Immersion}, where the images of some vertices of~$H$ are prescribed, is solved on semicomplete digraphs by dynamic programming over a path decomposition~\mbox{\cite[Theorem~10.8]{FominP19}}.
This proof makes the path decomposition nice and uses semicompleteness only when a bag~$X_{i+1} = X_i \cup \{v\}$ introduces a vertex~$v$, where it needs that every vertex forgotten by the bags~$X_1, \ldots, X_i$ is an out-neighbor of~$v$~\cite[proof of Theorem 10.8, Cases 1.2, 1.4 and 2]{FominP19}.
By \zcref{obs:covering-non-arcs}, every nice path decomposition that covers the non-arcs has this property, so their proof applies without change and gives the following.

\begin{proposition}[Adapted from {\textcite[Theorem 10.8]{FominP19}}]\label{prop:fp-immersion-dp}
  There is an algorithm that, given a digraph~$H$, a digraph~$D$ on~$n$ vertices with a nice path decomposition of width~$p$ that covers the non-arcs of~$D$, and pairwise distinct prescribed images in $V(D)$ for some of the vertices of~$H$, decides in $2^{\Ocal(|V(H)| \cdot \log p + p \cdot |E(H)| \cdot (\log |E(H)| + \log p))} \cdot n$ time whether~$H$ has an immersion into~$D$ that respects the prescribed images.
\end{proposition}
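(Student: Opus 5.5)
The statement is an adaptation of the dynamic programming of \textcite[Theorem~10.8]{FominP19}. The plan is to check that its proof, run verbatim on a digraph~$D$ with a nice path decomposition~$\mathcal{X} = (X_1, \ldots, X_r)$ that covers the non-arcs, never uses semicompleteness except through one structural fact that \zcref{obs:covering-non-arcs} supplies.

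First, I recall the shape of their algorithm. For each bag~$X_i$, the states record a partial immersion restricted to the already processed part $A_i = X_1 \cup \cdots \cup X_i$. Concretely, a state stores:
\begin{outline}[romanenumerate]
\1 which vertices of~$H$ are mapped to forgotten vertices, to vertices of~$X_i$, or not yet placed;
\1 for every arc of~$H$, the sequence of its path's crossings through the separator~$X_i$, in the compressed form used by Fomin and Pilipczuk;
\1 the arcs already used inside~$D[X_i]$.
\end{outline}
The number of states is $2^{\Ocal(|V(H)| \log p + p |E(H)| (\log |E(H)| + \log p))}$, and transitions cost the same up to polynomial factors. There are $\Ocal(n)$ bags, because a nice decomposition of an $n$-vertex digraph has at most $2n + 1$ bags. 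Together this gives the claimed run time. None of this bookkeeping depends on the arc set being semicomplete, since every count is in terms of~$p$,~$H$ and the bag sizes.

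Second, I would go through the transition cases. Forget transitions only delete a vertex from the interface and check that nothing remaining still needs it, so they are purely local to the arcs of~$D$ incident to the bag. The only case where their proof invokes semicompleteness is an introduce bag $X_{i+1} = X_i \cup \{v\}$ (their Cases~1.2, 1.4 and~2). There they use that every forgotten vertex, that is, every vertex of $A_i \setminus X_i$, is an out-neighbor of~$v$. This lets a path segment leave~$v$ and jump into the forgotten region through an arc $(v, w)$ that the algorithm does not need to enumerate explicitly. This is exactly the property of~$\mathcal{X}$ needed here: by \zcref{obs:covering-non-arcs}, because~$\mathcal{X}$ is nice and covers the non-arcs, $v$ lies in $B_i \setminus A_i$ and is complete to $A_i \setminus B_i = A_i \setminus X_i$. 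Substituting this observation at each of those points, and leaving everything else unchanged, gives correctness. The respect of prescribed images is handled as in the original, by restricting which bag may introduce the image of a rooted vertex.

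The main obstacle is certifying that no \emph{other} step of their proof silently uses semicompleteness. Candidates are the completeness direction of the argument, where a solution is projected onto states, and any use of the fact that forgotten vertices send no arcs into the future. For the latter, separations give it in every digraph by \zcref{prop:separations-from-nice-path-decomposition}. For the former, the projection of an actual immersion onto separators~$(A_i, B_i)$ uses only that $(A_i, B_i)$ is a separation, which holds for arbitrary digraphs. So I would audit their case analysis and record that each remaining step relies only on separations and on the introduced-vertex property.
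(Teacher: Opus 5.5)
Your proposal is correct and matches the paper's argument. The paper also notes that the proof of Fomin and Pilipczuk's Theorem~10.8 uses semicompleteness only when a bag introduces a vertex~$v$ (their Cases~1.2, 1.4 and~2), where it needs~$v$ to be complete to the forgotten vertices; it then supplies exactly this property through \zcref{obs:covering-non-arcs}, so the dynamic programming runs unchanged on a nice path decomposition that covers the non-arcs.
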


We reduce \ADP to this problem.

\begin{lemma}\label{lem:adp-dp}
  There is an algorithm that, given an instance~$(D, k, \mathcal{R})$ of {\sc ADP} with $n = |V(D)|$ and a path decomposition of~$D$ of width~$w$ with~$r$ bags that covers the non-arcs of~$D$, in time~$2^{\Ocal(k(w+k) \cdot \log(w+k+1))} \cdot n + \Ocal((w + k + 1) \cdot r)$ decides if $(D, k, \mathcal{R})$ is a ``yes''-instance.
\end{lemma}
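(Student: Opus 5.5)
We reduce \ADP to \textsc{Rooted Immersion} and apply \zcref{prop:fp-immersion-dp}. The main technical issue is that terminals may repeat (the requests form a multiset and $s_i = t_j$ is allowed), while an immersion maps vertices of~$H$ to \emph{distinct} vertices of~$D$.

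\textbf{Building~$H$.} Let $T = V(\mathcal{R})$, so $|T| \leq 2k$. Take $V(H) = T$, and for every request $(s_i, t_i)$ add an arc $(s_i, t_i)$ to~$H$, with every terminal prescribed to map to itself. Requests with $s_i = t_i$ are satisfied by trivial paths and are discarded. Equal requests would create parallel arcs, which digraphs here forbid. To handle them, we subdivide each arc of~$H$ once with a fresh, unprescribed vertex. The images of these new vertices lie on the paths, so they must be distinct from the terminals and from each other. That is a genuine restriction, because a path of length one has no internal vertex to host a subdivision vertex.

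Simpler alternative (which I would adopt). Keep $V(H) = T$ and allow parallel arcs in~$H$. The dynamic programming of \textcite[Theorem 10.8]{FominP19} handles multigraph patterns, since it only tracks, per arc of~$H$, how its path crosses the bags. Then a solution of the instance is exactly an immersion of~$H$ respecting the prescribed images: the paths are arc-disjoint, the terminals are fixed, and paths may share vertices. This gives $|V(H)| \leq 2k$ and $|E(H)| \leq k$.

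\textbf{Adapting the decomposition.} Next we turn the given decomposition into a nice one covering the non-arcs, as in \zcref{sec:prelim}. This takes $\Ocal((w+1)(n+r))$ time, preserves the width, and preserves the covering property. Adding~$T$ to every bag is not required, since images are prescribed; but if the adapted dynamic programming needs the roots present throughout, adding them yields width at most $w + 2k$ while keeping the decomposition valid and still covering the non-arcs. Either way the width is $p = \Ocal(w+k)$, and the initial processing fits in $\Ocal((w+k+1)\cdot r)$ plus linear terms.

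\textbf{Running time.} Plugging into \zcref{prop:fp-immersion-dp}, the exponent is $\Ocal(k \log p + p \cdot k (\log k + \log p)) = \Ocal(k(w+k)\log(w+k+1))$, which gives the claimed bound.

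The main obstacle is the distinctness requirement on terminal images together with repeated or coinciding requests. I would first check whether the cited dynamic programming tolerates parallel arcs in~$H$, which I expect it does. If it does not, I would fall back on duplicating each request's arcs via unprescribed midpoints, guarded by a small case analysis for requests realizable by single arcs.
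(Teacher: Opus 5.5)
Your reduction has a genuine gap exactly where you located the difficulty: equal requests, which give~$H$ parallel arcs.

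\textbf{The adopted route.} You pass a multigraph~$H$ to the dynamic programming. But \zcref{prop:fp-immersion-dp} is stated for a digraph~$H$, and digraphs in this paper have no parallel arcs. That the underlying proof tolerates parallel arcs is an expectation you do not verify, so the lemma does not follow from the tools at hand.

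\textbf{The fallback.} Subdividing the arcs of~$H$ with unprescribed vertices is sound in one direction. To map a solution back, however, you need on every path an internal vertex that is not a terminal and differs from those chosen on the other paths. This fails for paths of length one, as you note. It also fails for a path $(s_1, x, t_1)$ whose internal vertex~$x$ is a terminal of another request. For instance, take vertices $s, x, t$ with arcs $(s, x)$, $(x, t)$, $(t, x)$. The requests $(s, t)$ and $(t, x)$ are satisfiable, but the subdivided~$H$ has five vertices and cannot be immersed injectively into three. So no case analysis confined to~$H$ repairs this.

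\textbf{The missing idea} is to modify~$D$ rather than~$H$. For every $i \in [k]$:
\begin{itemize}
\item add a new vertex~$s'_i$ whose only arc is $(s'_i, s_i)$;
\item put~$s'_i$ into every bag;
\item replace the request $(s_i, t_i)$ by $(s'_i, t_i)$.
\end{itemize}
A path from~$s'_i$ to~$t_i$ is the arc $(s'_i, s_i)$ followed by a path from~$s_i$ to~$t_i$ in~$D$. These new arcs are pairwise distinct, so the new instance is equivalent. The decomposition has width at most $w + k$ and still covers the non-arcs, since~$s'_i$ lies in every bag.

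Now the arcs $(s'_i, t_i)$ of~$H$ have pairwise distinct tails and are not loops. Hence~$H$ is a genuine digraph with $|V(H)| \leq 2k$ and $|E(H)| = k$, and requests with $s_i = t_i$ need no special treatment. Your running-time computation then goes through with $p \leq w + k$.
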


\begin{proof}
  First, we modify the instance so that no two requests are equal and no request has equal terminals, since the digraph~$H$ below must have neither loops nor parallel arcs.
  Let~$\mathcal{X}$ be the given path decomposition.
  For every $i \in [k]$, we add to~$D$ a new vertex~$s'_i$ with the single arc~$(s'_i, s_i)$, we add~$s'_i$ to every bag of~$\mathcal{X}$, and we replace the request~$(s_i, t_i)$ by $(s'_i, t_i)$.
  Let~$D_0$,~$\mathcal{X}_0$, and~$\mathcal{R}_0$ be the results.
  Since $(s'_i, s_i)$ is the only arc at~$s'_i$, a path from~$s'_i$ to~$t_i$ in~$D_0$ consists of this arc followed by a path from~$s_i$ to~$t_i$ in~$D$.
  The arcs~$(s'_1, s_1), \ldots, (s'_k, s_k)$ are distinct, so $(D_0, k, \mathcal{R}_0)$ is equivalent to $(D, k, \mathcal{R})$.
  The sequence~$\mathcal{X}_0$ is a path decomposition of~$D_0$ of width at most $w + k$ with~$r$ bags.
  It covers the non-arcs of~$D_0$, since every non-arc of~$D_0$ is a non-arc of~$D$ or has an endpoint~$s'_i$, which belongs to every bag.

  Let~$H$ be the digraph with vertex set~$V(\mathcal{R}_0)$ and arc set~$\{(s'_i, t_i) \mid i \in [k]\}$.
  These arcs have distinct tails and are not loops, so~$H$ has neither loops nor parallel arcs, and $|V(H)| \leq 2k$.
  By the definition of an immersion, $(D_0, k, \mathcal{R}_0)$ is a yes-instance if and only if there is an immersion of~$H$ into~$D_0$ mapping every vertex of~$H$ to itself.
  We make~$\mathcal{X}_0$ nice in $\Ocal((w + k + 1) \cdot (n + k + r))$ time, which preserves the covering of the non-arcs as noted after \zcref{def:covering-non-arcs}.
  Then we run the algorithm of \zcref{prop:fp-immersion-dp} on~$H$,~$D_0$, this nice path decomposition, and the prescribed image~$y$ of every vertex~$y$ of~$H$.
  Since $|V(H)| \leq 2k$, $|E(H)| = k$, the width is at most $w + k$, and~$D_0$ has $n + k$ vertices, its run time is $2^{\Ocal(k(w+k) \cdot \log(w+k+1))} \cdot n$, and the result follows.
\end{proof}

\subsection{Anti-triples}
\label{sec:anti-triples}

Anti-triples, together with triples, are the obstructions to path decompositions of small width that cover the non-arcs of an \hsemi{h} digraph (\zcref{lem:covering-obstructions}).

For a digraph~$D$, we denote by~$\hat{D}$ the completion of~$D$ (\zcref{def:detours-and-completions}) obtained by adding both arcs~$(u, v)$ and $(v, u)$ for every non-arc~$(u, v)$ of~$D$.
A path decomposition of~$D$ covers the non-arcs of~$D$ if and only if it is a path decomposition of~$\hat{D}$.
To see this, note that the two arcs of~$\hat{D}$ between non-neighbors~$u$ and~$v$ of~$D$ satisfy \caseref{def:pathwidth}{def:pathwidth-item-2} exactly when some bag contains~$u$ and~$v$.
Thus, by \zcref{thm:large-dpw-to-triples} and \zcref{cor:triple-dpw} applied to~$\hat{D}$, the triples of~$\hat{D}$ are the obstructions to path decompositions of small width that cover the non-arcs of~$D$.
We find two kinds of them in~$D$, namely the triples of~$D$ and the following new obstruction.

\begin{definition}[$k$-anti-triple]\label{def:anti-triple}
  Let~$D$ be a digraph and~$k$ be a positive integer.
  A \emph{$k$-anti-triple} of~$D$ is a triple~$(A, B, C)$ of pairwise disjoint subsets of $V(D)$ of size~$k$ such that~$A$ is complete to~$B$,~$B$ is complete to~$C$, and there are orderings~$(a_1, \ldots, a_k)$ of~$A$ and $(c_1, \ldots, c_k)$ of~$C$ such that~$a_i$ and~$c_i$ are non-neighbors for every $i \in [k]$.
\end{definition}

Every $k$-triple of~$D$ is a $k$-triple of~$\hat{D}$, as $E(D) \subseteq E(\hat{D})$, and so is every $k$-anti-triple of~$D$, as~$\hat{D}$ has the arc~$(c_i, a_i)$ for every $i \in [k]$.
Thus, by \zcref{cor:triple-dpw}, a $k$-anti-triple of~$D$ forces every path decomposition of~$D$ that covers the non-arcs of~$D$ to have width at least $k - 1$, even if~$D$ is acyclic.
If $(A, B, C)$ is a $k$-anti-triple of~$D$ and $a \in A$ and $c \in C$ are non-neighbors, then $B \subseteq \detours{D}{a}{c}$, since~$A$ is complete to~$B$ and~$B$ is complete to~$C$.
See \zcref{fig:adp-anti-triple}.

\begin{figure}[ht]
  \centering
  \begin{tikzpicture}[yscale=.6]
  \foreach \i in {1,...,3} {
      \node[blackvertex, label={[yshift=-.1cm]-90:{$a_\i$}}] (a\i) at (1.4*\i-1.4,-.5) {};
      \node[blackvertex, label={[xshift=-.1cm]180:{$b_\i$}}] (b\i) at (-2.2,\i) {};
      \node[blackvertex, label={[yshift=.1cm]90:{$c_\i$}}] (c\i) at ($(a\i) + (0,5)$) {};
      \draw[dotted, thick] (c\i) -- (a\i);
    }
  \foreach \i in {1,...,3}
  \foreach \j in {1,...,3} {
      \draw[arrow,opacity=.5] (a\i) -- (b\j);
      \draw[arrow,opacity=.5] (b\i) -- (c\j);
    }
  \draw[arrow, dashed] (a1) to[bend right=30] (c1);
  \node[rectangle,draw,fit=(a1)(a3), label=0:{$A$}] {};
  \node[rectangle,draw,fit=(b1)(b3), label=90:{$B$}] {};
  \node[rectangle,draw,fit=(c1)(c3), label=0:{$C$}] {};
\end{tikzpicture}%
%
  \caption{A $3$-anti-triple~$(A, B, C)$. Dotted segments join the non-neighbors~$a_i$ and~$c_i$. Every vertex of~$B$ is the internal vertex of a detour from~$a_1$ to~$c_1$, so $B \subseteq \detours{D}{a_1}{c_1}$. The dashed arc~$(a_1, c_1)$ is the arc that the algorithm of \zcref{thm:adp-fpt-simplified} adds.}
  \label{fig:adp-anti-triple}
\end{figure}

We now find the path decomposition or one of the two obstructions.
We run the algorithm of \zcref{prop:semicomplete-jungle-algorithm} on~$\hat{D}$ rather than on the detour completion~$D^\star$ of~$D$, since a path decomposition of~$D^\star$ need not cover the non-arcs of~$D$.
Then \zcref{lem:jungle-extraction} turns the short jungle into a triple of~$D$ or into pairs of non-neighbors that give an anti-triple of~$D$.

\begin{lemma}\label{lem:covering-obstructions}
  Let~$\ell$ be a positive integer,~$h$ be a non-negative integer, $J = \fref{lem:jungle-extraction}(\ell, h, \ell)$, and $\fref{lem:covering-obstructions}(\ell, h) = 140J$.
  There is an algorithm with run time~$\Ocal(J^3 \cdot n^2)$ that, given an \hsemi{h} digraph~$D$ on~$n$ vertices, outputs a path decomposition of~$D$ of width at most $\fref{lem:covering-obstructions}(\ell, h)$ that covers the non-arcs of~$D$, an $\ell$-triple of~$D$, or an $\ell$-anti-triple of~$D$.
\end{lemma}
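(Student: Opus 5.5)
The plan mirrors the proof of \zcref{thm:large-dpw-to-triples}, with~$\hat{D}$ in place of~$D^\star$ and with the second output of \zcref{lem:jungle-extraction} read directly as an anti-triple instead of being repaired by detours. Set $K = J$. Then $140K = \fref{lem:covering-obstructions}(\ell, h)$, and we compute~$\hat{D}$.

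Computing~$\hat{D}$ takes $\Ocal(n^2)$ time: we scan the adjacency matrix and add both arcs for every non-adjacent pair. Since~$\hat{D}$ is semicomplete, we run the algorithm of \zcref{prop:semicomplete-jungle-algorithm} on~$\hat{D}$ with~$K$, in $\Ocal(J^3 \cdot n^2)$ time. There are two possible outputs.

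\emph{Path decomposition.} The algorithm may output a path decomposition of~$\hat{D}$ of width at most $140J$. As noted before \zcref{def:anti-triple}, a path decomposition of~$\hat{D}$ is exactly a path decomposition of~$D$ that covers the non-arcs of~$D$. So we output it.

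\emph{Short jungle.} Otherwise the algorithm outputs a $(J, 4)$-short jungle~$Z$ of~$\hat{D}$, and~$\hat{D}$ is a completion of~$D$. We apply \zcref{lem:jungle-extraction} to~$D$ and~$Z$ with $S = \hat{D}$, $k = \ell$ and $s = \ell$, which costs $\Ocal(\ell^2 \cdot J^3)$ time.
\begin{itemize}
  \item If it returns an $\ell$-triple of~$D$, we output that triple.
  \item Otherwise it returns a set~$B$ of size~$2\ell$ and~$\ell$ pairs $(a_1, c_1), \ldots, (a_\ell, c_\ell)$ of non-neighbors of~$D$. Their $2\ell$ vertices are pairwise distinct and lie outside~$B$, each~$a_j$ is complete to~$B$ in~$D$, and~$B$ is complete to each~$c_j$ in~$D$. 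Let~$B'$ be any $\ell$-subset of~$B$. Then $(\{a_1, \ldots, a_\ell\}, B', \{c_1, \ldots, c_\ell\})$ consists of three pairwise disjoint sets of size~$\ell$, the first is complete to~$B'$, $B'$ is complete to the third, and the orderings pair each~$a_j$ with a non-neighbor~$c_j$. This is an $\ell$-anti-triple by \zcref{def:anti-triple}, so we output it.
\end{itemize}

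For the run time, the only real step is to absorb the $\ell^2$ factor from \zcref{lem:jungle-extraction} into $\Ocal(J^3 \cdot n^2)$. We handle this as in \zcref{thm:large-dpw-to-triples}.
\begin{itemize}
  \item If $n \leq 140J + 1$, we output the single-bag decomposition $(V(D))$. It has width at most $140J$ and trivially covers the non-arcs.
  \item Otherwise $n > J \geq \ell$, so $\ell^2 J^3 \leq J^3 n^2$.
\end{itemize}
The total run time is therefore $\Ocal(J^3 \cdot n^2)$.

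I expect no serious obstacle. The point to check carefully is that all the disjointness and completeness requirements of the anti-triple come straight from the second output of \zcref{lem:jungle-extraction}, so no detour argument is needed here.
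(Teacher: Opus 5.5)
Your proposal is correct and follows the paper's proof step for step: compute $\hat{D}$, run the semicomplete jungle algorithm with $K = J$, output the path decomposition (which covers the non-arcs because it is one of $\hat{D}$), and otherwise apply the jungle extraction lemma with $S = \hat{D}$ and $k = s = \ell$, reading its second output directly as an $\ell$-anti-triple. The one difference is in the run-time bookkeeping, where your small-$n$ branch is harmless but unnecessary: the paper notes that in the jungle case $\ell \le J \le |Z| \le n$ holds automatically, so the $\ell^2$ factor is absorbed without a separate case.
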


\begin{proof}
  We compute~$\hat{D}$ in $\Ocal(n^2)$ time and run the algorithm of \zcref{prop:semicomplete-jungle-algorithm} on~$\hat{D}$ with $K = J$, which takes $\Ocal(J^3 \cdot n^2)$ time.
  If it outputs a path decomposition~$\mathcal{X}$ of~$\hat{D}$ of width at most $140J$, then~$\mathcal{X}$ is a path decomposition of~$D$ that covers the non-arcs of~$D$, and we output it.
  Otherwise, it outputs a $(J, 4)$-short jungle~$Z$ of~$\hat{D}$.
  Since~$\hat{D}$ is a completion of~$D$, we apply the algorithm of \zcref{lem:jungle-extraction} to~$D$, $S = \hat{D}$, and~$Z$, with~$\ell$ in place of~$k$ and of~$s$.
  This takes $\Ocal(\ell^2 \cdot J^3)$ time, which is $\Ocal(J^3 \cdot n^2)$ as $\ell \leq J \leq |Z| \leq n$.
  If it outputs an $\ell$-triple of~$D$, we output it.
  Otherwise, it outputs a set~$B$ of size~$2\ell$ and pairs~$(a_1, c_1), \ldots, (a_\ell, c_\ell)$ of non-neighbors of~$D$ as in its second item.
  Let $A = \{a_1, \ldots, a_\ell\}$, $C = \{c_1, \ldots, c_\ell\}$, and $B' \subseteq B$ be a set of size~$\ell$.
  Since the $2\ell$ vertices of the pairs are distinct and outside~$B$, the second item shows that $(A, B', C)$ is an $\ell$-anti-triple of~$D$, and we output it.
\end{proof}

\subsection{Detours and irrelevant vertices}\label{sec:adp-detours-irrelevant-vertices}

We now justify that the algorithm can add an arc between two non-neighbors of a large anti-triple and delete a vertex of a large triple without changing the answer.
We begin with the arc.
For a non-arc~$(u, v)$ of~$D$, we denote by $D + (u, v)$ the digraph obtained from~$D$ by adding the arc~$(u, v)$.
\textcite{FradkinS15} consider digraphs~$G$ and~$H$ with the same number of prescribed vertices, called \emph{roots}, where the roots of~$H$ are distinct.
For all distinct vertices~$u$ and~$v$ of~$G$, they show that adding arcs from~$u$ to~$v$ does not change whether~$H$ has an \emph{infusion} in~$G$ if there are at least $2|E(H)|$ detours from~$u$ to~$v$~\cite[(2.4)]{FradkinS15}.
When every vertex of~$H$ is a root and the ends of every arc of~$H$ correspond to distinct roots of~$G$, such an infusion maps every arc~$(x, y)$ of~$H$ to a path of~$G$ from the root corresponding to~$x$ to the root corresponding to~$y$, and these paths are pairwise arc-disjoint.
A request~$(s, s)$ is met by the path~$(s)$ in~$D$ and in $D + (u, v)$.
For the $k' \leq k$ other requests~$(s_i, t_i)$, we let~$H$ consist of~$k'$ pairwise disjoint arcs~$(x_i, y_i)$ whose ends are roots corresponding to~$s_i$ and~$t_i$.
Then the infusions of~$H$ in~$D$ and in $D + (u, v)$ are the solutions for these requests, and $2|E(H)| = 2k' \leq 2k$, so we obtain the following.

\begin{proposition}[{\textcite[(2.4)]{FradkinS15}}]\label{prop:avoidable-arcs}
  Let $(D, k, \mathcal{R})$ be an instance of {\sc ADP} and $u, v$ be non-neighbors of~$D$ with $|\detours{D}{u}{v}| \geq 2k$.
  Then $(D, k, \mathcal{R})$ and $(D + (u, v), k, \mathcal{R})$ are equivalent.
\end{proposition}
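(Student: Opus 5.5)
The plan is to prove both directions directly, using the rerouting argument already sketched in \zcref{sec:overview}.

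The easy direction: $D$ is a subgraph of $D + (u, v)$, so every solution of $(D, k, \mathcal{R})$ is a solution of $(D + (u, v), k, \mathcal{R})$.

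For the converse, let $(P_1, \ldots, P_k)$ be a solution of $(D + (u, v), k, \mathcal{R})$. Since the $P_i$ are pairwise arc-disjoint, the new arc $(u, v)$ is used by at most one path, say $P_j$; if no path uses it, the solution already lives in $D$ and we are done. Otherwise, consider the other paths $P_i$ with $i \neq j$. Each of them is a path, so it uses at most one arc leaving $u$ and at most one arc entering $v$. Hence together they use at most $k - 1$ arcs leaving $u$ and at most $k - 1$ arcs entering $v$. The path $P_j$ uses $(u, v)$, and since it is a path it uses no other arc leaving $u$ and no other arc entering $v$.

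Each detour $(u, w, v)$ with $w \in \detours{D}{u}{v}$ consists of the arc $(u, w)$ leaving $u$ and the arc $(w, v)$ entering $v$. These arcs are distinct for distinct middle vertices $w$. At most $k - 1$ detours have their first arc used, and at most $k - 1$ have their second arc used. Since $|\detours{D}{u}{v}| \geq 2k > 2(k - 1)$, some $w$ has both arcs $(u, w)$ and $(w, v)$ unused by every $P_i$.

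Now replace $(u, v)$ in $P_j$ by $(u, w, v)$. This gives a walk of $D$ from $s_j$ to $t_j$, which contains a path $P'_j$ from $s_j$ to $t_j$ whose arcs are arcs of the walk. All arcs of $P'_j$ are either arcs of the original $P_j$ other than $(u, v)$, or one of the two unused detour arcs. So $P'_j$ is arc-disjoint from the other paths, all paths lie in $D$, and $(D, k, \mathcal{R})$ is a yes-instance.

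The only delicate point is the counting, in particular checking that $P_j$ itself does not use $(u, w)$ or $(w, v)$. This holds because $P_j$ visits $u$ once and leaves it via $(u, v)$, and enters $v$ once via $(u, v)$. No other step needs care. Alternatively, one can simply invoke \textcite[(2.4)]{FradkinS15} through the infusion translation described before the statement, but the direct argument above is self-contained.
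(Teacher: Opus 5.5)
Your proof is correct, but it does not follow the paper's formal justification. In the body, the paper does not reprove the rerouting. It translates \textsc{ADP} into the infusion setting of Fradkin and Seymour and cites their result (2.4), which needs at least $2|E(H)|$ detours. That translation takes some bookkeeping:
\begin{itemize}
\item requests $(s,s)$ are set aside, since the trivial path $(s)$ meets them in both digraphs;
\item the remaining $k' \leq k$ requests are encoded as $k'$ pairwise disjoint arcs of a digraph $H$ with distinct roots, so that $2|E(H)| = 2k' \leq 2k$.
\end{itemize}
Your direct argument is exactly the one the paper only sketches in its overview: the other paths use at most $k-1$ arcs leaving $u$ and at most $k-1$ arcs entering $v$, so some detour has both arcs unused and the path can take it instead.

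Your route buys self-containment. Repeated requests and requests with equal terminals need no special handling, since a path without arcs uses no arc at $u$ or $v$. Your count also shows that $2k-1$ detours already suffice. The citation route is shorter on the page and places the statement inside a more general theorem about infusions, at the price of depending on that external result and on the translation being set up correctly.
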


We now turn to triples.
\textcite[Theorem 9.1]{FominP19} show that, for \textsc{Rooted Immersion} in a semicomplete digraph, a large triple~$(A, B, C)$ avoiding the terminals contains an irrelevant vertex~$x \in B$.
We prove the analogue for \ADP in \hsemi{h} digraphs in \zcref{lem:adp-irrelevant-vertex}.
As in their proof, a path through~$x$ is rerouted through~$B$ and an arc from~$C$ to~$A$ of the triple.
Their choice of~$x$, however, needs semicompleteness for a lemma on bipartite digraphs with an arc between any two vertices on opposite sides~\cite[Lemma 9.8]{FominP19}, and we choose~$x$ with the following counting lemma, which only needs that the input is \hsemi{h}.
For a vertex~$u$ with few out-neighbors in~$B$, the set~$\Lambda(u)$ below consists of the vertices of~$B$ that~$u$ reaches by an arc or through another such vertex.

\begin{lemma}\label{lem:rich-vertices}
  Let~$D$ be an \hsemi{h} digraph on~$n$ vertices, $B \subseteq V(D)$, and~$q$ and~$d$ be positive integers.
  Let~$W$ be the set of vertices of~$D$ with at most~$q$ out-neighbors in~$B$.
  For $u \in W$, let $\Lambda(u)$ be the set of vertices~$b \in B$ with $(u, b) \in E(D)$, or with $(u, u') \in E(D)$ and $(u', b) \in E(D)$ for some $u' \in W$, and call~$u$ \emph{poor} if $|\Lambda(u)| < d$.
  Then the set~$Y$ of vertices of~$B$ with a poor in-neighbor in~$W$ has size at most $q(2d + h - 1)$ and can be computed in $\Ocal(q \cdot n^2)$ time.
\end{lemma}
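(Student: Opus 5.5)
The plan is to argue by contradiction: from a large~$Y$ we extract a large family of poor vertices with pairwise distinct ``private'' out-neighbors in~$B$. Then \zcref{obs:out-degree-h-semicomplete} gives one vertex of the family that reaches too many vertices of~$B$ in two steps to be poor.

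Let~$P \subseteq W$ be the set of poor vertices, so $Y = \bigcup_{v \in P} (N^+(v) \cap B)$. Each $v \in P$ lies in~$W$, so it has at most~$q$ out-neighbors in~$B$. We build a family greedily. Set $Y_0 = \emptyset$. While $Y_{i-1} \neq Y$, pick some $b_i \in Y \setminus Y_{i-1}$ and a poor in-neighbor $v_i \in P$ of~$b_i$, and set $Y_i = Y_{i-1} \cup (N^+(v_i) \cap B)$. Each step adds at most~$q$ vertices of~$B$, so the process makes at least $m \geq |Y|/q$ steps. By construction the vertices $b_1, \ldots, b_m$ are pairwise distinct and $(v_i, b_i) \in E(D)$. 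The vertices $v_1, \ldots, v_m$ are also pairwise distinct, since $b_i \notin Y_{i-1} \supseteq N^+(v_j) \cap B$ for every $j < i$.

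Now assume $|Y| > q(2d + h - 1)$. Then $m \geq 2d + h$. The digraph $D[\{v_1, \ldots, v_m\}]$ is \hsemi{h}, so by \zcref{obs:out-degree-h-semicomplete} some~$v_j$ has at least $(m - 1 - h)/2 \geq d - 1/2$ out-neighbors~$v_i$ in the family. Since out-degrees are integers, this is at least~$d$ out-neighbors. Every such~$v_i$ lies in~$W$ and has $(v_i, b_i) \in E(D)$, so $b_i \in \Lambda(v_j)$. The~$b_i$ are distinct, hence $|\Lambda(v_j)| \geq d$, which contradicts that~$v_j$ is poor. This proves $|Y| \leq q(2d + h - 1)$.

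The computation is direct. We compute~$W$ by counting out-neighbors in~$B$ in $\Ocal(n^2)$ time. For each $u \in W$, we form~$\Lambda(u)$ as the union of $N^+(u) \cap B$ and the sets $N^+(u') \cap B$ over the out-neighbors $u' \in W$ of~$u$. Each of these sets has at most~$q$ elements and can be precomputed as a list, and we mark elements in an array over~$B$. This takes $\Ocal(qn)$ time per~$u$, and $\Ocal(q \cdot n^2)$ in total. From these sets we read off the poor vertices, and~$Y$ is the union of their at most~$q$ out-neighbors in~$B$, computed in $\Ocal(qn)$ further time.

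The main point to get right is the greedy choice. It must ensure both that the family is large, which uses the bound~$q$ from the definition of~$W$ (not~$d$), and that the witnesses~$b_i$ are distinct, so that out-degree in the family translates into the size of~$\Lambda$.
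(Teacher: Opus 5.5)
Your proof is correct and follows the paper's argument. You bound $|Y|$ by $q$ times the number of selected poor in-neighbors, then apply the out-degree observation for $h$-semicomplete digraphs to the selected set, so that some vertex's out-neighbors in the set inject into its $\Lambda$ and it cannot be poor; the only difference is that your greedy private-witness construction is more bookkeeping than needed, since the paper just picks one poor in-neighbor $w_b$ for each $b \in Y$, sets $W_0 = \{w_b \mid b \in Y\}$, and gets distinct witnesses automatically because $b \mapsto w_b$ is a function.
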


\begin{proof}
  If $Y = \emptyset$, there is nothing to prove.
  Otherwise, for every $b \in Y$, choose a poor in-neighbor~$w_b \in W$ of~$b$, and let $W_0 = \{w_b \mid b \in Y\}$.
  Every $w \in W_0$ has at most~$q$ out-neighbors in~$B$, and~$b$ is an out-neighbor of~$w_b$, so $|Y| \leq q \cdot |W_0|$.
  Since $D[W_0]$ is \hsemi{h}, by \zcref{obs:out-degree-h-semicomplete} some $v \in W_0$ has at least $(|W_0| - 1 - h)/2$ out-neighbors in~$W_0$.
  Each out-neighbor~$w$ of~$v$ in~$W_0$ is~$w_b$ for some $b \in Y$, and then $b \in \Lambda(v)$, as $w \in W$ and $(v, w), (w, b) \in E(D)$.
  Distinct out-neighbors~$w$ give distinct vertices~$b$, since $w_b = w$, so $|\Lambda(v)| \geq (|W_0| - 1 - h)/2$.
  As~$v$ is poor, $|\Lambda(v)| \leq d - 1$, and we conclude that $|W_0| \leq 2d + h - 1$ and $|Y| \leq q(2d + h - 1)$.

  For the run time, we compute $N^+(u) \cap B$ for every $u \in V(D)$ in $\Ocal(n^2)$ time, which gives~$W$.
  For $u \in W$, the set~$\Lambda(u)$ is the union of $N^+(u) \cap B$ and of the sets~$N^+(u') \cap B$ over the out-neighbors~$u' \in W$ of~$u$.
  These sets have size at most~$q$, so the sets~$\Lambda(u)$ for all $u \in W$ are computed in $\Ocal(q \cdot n^2)$ time.
  Finally,~$Y$ is the union of the sets~$N^+(u) \cap B$ over the poor vertices~$u \in W$, and the result follows.
\end{proof}

Next, a solution with the minimum total number of arcs uses few vertices of a large triple.
\textcite[Observations 9.3 to 9.6]{FominP19} prove this for \textsc{Rooted Immersion} in semicomplete digraphs, with bounds in terms of the number~$|H|$ of vertices and arcs of the immersed digraph~$H$, for the large triple of their irrelevant vertex rule~\cite[Theorem 9.1]{FominP19}.
Their proofs never use semicompleteness or the roots, and they use the size of the triple only through the fact that it exceeds $4|H|$.
Otherwise, they use $|H|$ only as an upper bound on the number of paths, which bounds the used arcs at a vertex~\cite[Observation 9.2]{FominP19} and the used arcs of the matching.
Each proof replaces a subpath of a path by a shorter walk with the same ends whose arcs are unused arcs of the triple, and the path contained in this walk gives a solution with fewer arcs in total.
The bound for~$C$, which they obtain by symmetry, follows from the bound for~$A$ after reversing all arcs.
Thus their proofs, with the number~$k$ of paths in place of $|H|$, give the following for \ADP in every digraph.

\begin{proposition}[Adapted from {\textcite[Observations 9.3 to 9.6]{FominP19}}]\label{prop:solution-meets-triple}
  Let $(D, k, \mathcal{R})$ be an instance of {\sc ADP}, $(A, B, C)$ be an $N$-triple of~$D$ with $N > 4k$, and~$M$ be a set of~$N$ arcs from~$C$ to~$A$ witnessing that~$C$ is matched to~$A$.
  Let~$\mathcal{Q}$ be a solution of $(D, k, \mathcal{R})$ with the minimum total number of arcs.
  Then every path of~$\mathcal{Q}$ uses at most two arcs of~$M$, at most $2k + 4$ vertices of each of~$A$ and~$C$, and at most four vertices of~$B$.
\end{proposition}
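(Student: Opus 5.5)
Each bound is proved by an exchange argument on a minimum solution~$\mathcal{Q}$. Assuming a path $P \in \mathcal{Q}$ violates a bound, we find two vertices $x$ before $y$ on~$P$ and a walk from~$x$ to~$y$ that is shorter than the subpath $P[x,y]$. The walk uses only arcs of the triple (arcs from~$A$ to~$B$, from~$B$ to~$C$, and arcs of~$M$) that no path of~$\mathcal{Q}$ uses. Replacing $P[x,y]$ by this walk, and then extracting a path from the resulting walk, yields a solution with fewer arcs in total, contradicting minimality. Paths of~$\mathcal{Q}$ are arbitrary, so their arcs need not lie in the triple. The counting that guarantees unused arcs uses only two facts: \mbox{\cite[Observation 9.2]{FominP19}}, which says that at most~$k$ used arcs leave or enter any vertex, and that at most~$k$ arcs of~$M$ can be used by short paths. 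No semicompleteness is needed.

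\emph{Matching arcs.} Suppose~$P$ uses three arcs $(c_1,a_1),(c_2,a_2),(c_3,a_3)$ of~$M$, in this order. Then $P[a_1,c_3]$ has length at least~$3$, since it contains $(c_2,a_2)$ strictly inside and $a_1 \neq c_2$, $a_2 \neq c_3$. Because $|B| = N > 4k \geq 2k$, some $b \in B$ has both $(a_1,b)$ and $(b,c_3)$ unused: at most~$k$ arcs leaving~$a_1$ and at most~$k$ arcs entering~$c_3$ are used. The shortcut $(a_1,b,c_3)$ has length~$2$.

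\emph{Vertices of~$B$.} Suppose~$P$ visits five vertices $b_1,\dots,b_5$ of~$B$, in this order. Then $P[b_1,b_5]$ has length at least~$4$. We shortcut $b_1 \to c \to a \to b_5$ of length~$3$ by choosing an arc $(c,a) \in M$ such that all three arcs are unused. At most~$k$ matching arcs are used, at most~$k$ arcs leaving~$b_1$ are used, and at most~$k$ arcs entering~$b_5$ are used. This excludes at most~$3k$ of the $N > 4k$ arcs of~$M$, so a suitable $(c,a)$ exists.

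\emph{Vertices of~$A$, the main obstacle.} Since~$A$ is not complete to anything useful inside~$A$, the shortcut from a vertex $a \in A$ must go $a \to b \to c \to a'$ through an unused matching arc $(c,a')$. The difficulty is that the target~$a'$ is forced to be the matched partner of~$c$, so~$a'$ need not appear later on~$P$. I would handle this as follows. Suppose~$P$ meets more than $2k+4$ vertices of~$A$. By the previous two bounds, at most two of these are entered through an arc of~$M$, and at most four are reached through~$B$. Among the remaining vertices of $A \cap V(P)$, I would show that many are matched partners $a'$ of arcs $(c,a') \in M$ that are unused and whose tail~$c$ has an unused arc from some unused-out-arc vertex~$b$ of~$B$. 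The first vertex $a$ of $A \cap V(P)$ then reaches such an~$a'$ far along~$P$ by the length-$3$ walk $a \to b \to c \to a'$, while $P[a,a']$ has length greater than~$3$ because at least $2k+4$ vertices of~$A$ separate them. For the counting, all but at most~$k$ of the arcs of~$M$ are unused, and at most~$k$ arcs leave~$a$. For each unused $(c,a')$, I need some $b$ with $(a,b)$ and $(b,c)$ both unused: at most~$k$ arcs enter~$c$, and at most $k$ arcs leave~$a$, so such a~$b$ exists because $|B| > 2k$. The remaining issue is that~$a'$ must lie on~$P$ after~$a$ and far from it; the slack $2k+4$ accounts for at most~$k$ partners whose matching arc is used and for the bounded number of exceptional vertices. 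For~$C$, I would reverse all arcs, which turns $(A,B,C)$ into the triple $(C,B,A)$ with reversed~$M$, and apply the bound for~$A$.
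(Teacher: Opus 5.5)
Your strategy is the one the paper relies on, since it defers to the exchange arguments of Fomin and Pilipczuk. Your bounds for arcs of~$M$ and for vertices of~$B$ are right in structure. However, one of the two facts your counting rests on is false. A minimum solution can use up to $2k$ arcs of~$M$, not~$k$: a path $(c_1, a_1, c_2, a_2)$ with $(a_1, c_2) \in E(D)$ can be the only shortest route for a request $(c_1, a_2)$. The correct bound of $2k$ follows from your first part, since each of the $k$ paths uses at most two arcs of~$M$. In the $B$-part this gives $2k + k + k = 4k < N$ excluded matching arcs, and this is exactly where the hypothesis $N > 4k$ is used. Your count of $3k$ hides that.

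The genuine gap is the bound for~$A$. You name the obstacle, that the head of the unused matching arc must lie on~$P$ well after~$a$, but then only assert that ``the slack $2k+4$ accounts for'' it. That accounting uses the false count~$k$ and ``exceptional'' vertices entered through~$M$ or from~$B$, which play no role. Also, having $2k+4$ vertices of~$A$ between $a$ and $a'$ is not what is needed.

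The missing idea is to choose~$a'$ on~$P$ first and let~$c$ be its matched partner. Let $a_1, \ldots, a_t$ be the vertices of~$A$ on~$P$ in order, and suppose $t \geq 2k+5$.
\begin{itemize}
\item For $i \geq 5$, the subpath of~$P$ from~$a_1$ to~$a_i$ has length at least~$4$.
\item The arcs of~$M$ entering $a_5, \ldots, a_t$ are $t - 4 \geq 2k+1$ distinct arcs, and at most $2k$ arcs of~$M$ are used. So some $(c, a_i)$ with $i \geq 5$ is unused.
\item At most~$k$ used arcs leave~$a_1$ and at most~$k$ used arcs enter~$c$. Since $|B| > 2k$, some $b \in B$ makes $(a_1, b, c, a_i)$ a path of length~$3$ whose arcs are all unused.
\end{itemize}
Replacing the subpath from $a_1$ to $a_i$ by this path contradicts minimality. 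This gives exactly $2k+4$: four for $a_1, \ldots, a_4$, which are too close to~$a_1$, and $2k$ for the used arcs of~$M$. The bound for~$C$ then follows by reversing all arcs, as you say.
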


We are ready to find the irrelevant vertex.

\begin{lemma}[Adapted from {\textcite[Theorem 9.1]{FominP19}}]\label{lem:adp-irrelevant-vertex}
  Let~$k$ be a positive integer,~$h$ be a non-negative integer, and $\fref{lem:adp-irrelevant-vertex}(k, h) = 20k(20k^2 + 20k + h + 1) + 1$.
  Let $(D, k, \mathcal{R})$ be an instance of {\sc ADP} where~$D$ is an \hsemi{h} digraph on~$n$ vertices, and $(A, B, C)$ be an $\fref{lem:adp-irrelevant-vertex}(k, h)$-triple of~$D$ with $(A \cup B \cup C) \cap V(\mathcal{R}) = \emptyset$.
  There is an algorithm with run time~$\Ocal(k \cdot n^2)$ that finds an irrelevant vertex~$x \in B$ for $(D, k, \mathcal{R})$.
\end{lemma}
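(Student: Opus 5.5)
We will choose $x$ from $B$ outside the small exceptional set produced by \zcref{lem:rich-vertices}, and then reroute around it. Let $N = \fref{lem:adp-irrelevant-vertex}(k, h)$ and $M$ the matching arcs from $C$ to $A$. We apply \zcref{lem:rich-vertices} to $D$, the set $B$, with $q = 2k$ (or a comparable multiple of $k$) and $d$ of order $20k^2+20k$, obtaining the set $Y \subseteq B$ of vertices with a poor in-neighbor in $W$. Since $|Y| \leq q(2d + h - 1) < N$ for a suitable choice matching the formula $20k(20k^2+20k+h+1)$, some $x \in B \setminus Y$ exists, and computing $Y$ takes $\Ocal(k \cdot n^2)$ time. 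Deleting a vertex never creates solutions, so it suffices to show: if $(D,k,\mathcal{R})$ has a solution, then so does $(D \setminus \{x\}, k, \mathcal{R})$.

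Take a solution $\mathcal{Q}$ of minimum total number of arcs; since $N > 4k$, \zcref{prop:solution-meets-triple} says the paths together use at most $2k$ arcs of $M$, at most $k(2k+4)$ vertices of each of $A$ and $C$, and at most $4k$ vertices of $B$. If $x$ is unused we are done. Otherwise $x$ is an internal vertex of exactly one path $P$ (terminals avoid $B$), entering via an arc $(u,x)$ and leaving via $(x,w)$. The plan is to replace the subpath $u \to x \to w$ by a walk $u \to b \to c \to a \to b' \to w$ with $b, b' \in B \setminus \{x\}$, $(c,a) \in M$, all arcs unused by the rest of the solution: arcs $b \to c$ and $a \to b'$ are plentiful since $B$ is complete to $C$ and $A$ to $B$, and only $\Ocal(k^2)$ vertices/arcs are blocked, so with $N$ large a fresh triple-arc $(c,a)$ with unused incident arcs exists. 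The exit side is easy if $w$ has many in-neighbors in $A$ (or more simply: $w$ adjacent to most of $B$; since $w$ has at most $h$ non-neighbors, either many $b'$ with $(b',w)$ exist, or $w$ has many out-neighbors in $B$, in which case symmetrically we use an arc into $w$ from... ) — here I will instead route $a \to w$ directly when possible, using that $w$ has at most $h$ non-neighbors in $A$ and otherwise go via $B$.

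The main obstacle is the entry side: we need an unused arc from $u$ into $B \setminus \{x\}$, possibly through one intermediate vertex. If $u$ has more than $q$ out-neighbors in $B$, then at most $4k$ are used as vertices and at most $k$ arcs leave $u$ in the solution, so an unused arc $(u,b)$ to a good $b$ exists. Otherwise $u \in W$, and since $x \notin Y$, $u$ is not poor, so $|\Lambda(u)| \geq d$; each $b \in \Lambda(u)$ is reached by an arc or a two-arc path through $u' \in W$. Each $u'$ contributes at most $q$ vertices of $\Lambda(u)$, and only $k$ arcs at $u$ and at most $k$ arcs at each $u'$ are used, so counting shows some $b \in \Lambda(u)$ is reached by a path with unused arcs, avoiding the $\Ocal(k^2)$ blocked vertices, once $d$ exceeds roughly $q(k+1) + \Ocal(k^2)$. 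The resulting walk contains a path with the same ends avoiding $x$, and replacing the subpath gives a solution in $D \setminus \{x\}$ (arc-disjointness holds because all new arcs were unused and distinct). I expect the bookkeeping of which arcs/vertices are blocked, so the constants fit $\fref{lem:adp-irrelevant-vertex}$, to be the delicate part.
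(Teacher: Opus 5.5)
There is a genuine gap on the exit side. Your entry side matches the paper: either $u$ has many out-neighbors in $B$, or $u \in W$ is not poor because $x \notin Y$, which yields $k+1$ distinct intermediates $u_b$, one of them reached by an unused arc. But excluding $Y$ only constrains the in-neighbors of $x$ and says nothing about its out-neighbor $w$. Your fallback does not work either: $w$ having at most $h$ non-neighbors in $A$ or $B$ gives adjacency, not arcs into $w$.

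Here is a concrete counterexample with $h=0$, $k=1$ and $\mathcal{R}=\{(s,t)\}$. Make $B$ a transitive tournament with source $x$. Let $t$ have arcs to every vertex except $x$, with $(x,t)\in E(D)$, and let $s$ have arcs to every vertex except $t$. The in-neighbors of $x$ are then $A\cup\{s\}$, each with $N>q$ out-neighbors in $B$, so none lies in $W$ and $x\notin Y$. Yet $(s,x,t)$ is a solution and every path from $s$ to $t$ ends with $(x,t)$, so $x$ is relevant. Thus ``every $x\in B\setminus Y$ is irrelevant'' is false.

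The missing idea is to also apply \zcref{lem:rich-vertices} to the digraph obtained from $D$ by reversing all arcs, which is still \hsemi{h}. This gives a set $Y'$, and you choose $x\in B\setminus(Y\cup Y')$. The exit walk from an untaken $b_2\in B$ to $w$ is then the mirror image of your entry walk. This is why the triple size is $N=2q(2d+h-1)+1$ with the factor $2$, where $q=10k$ and $d=10k^2+10k+1$.

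A second error is the claim that $x$ lies on exactly one path. In \ADP the paths are only arc-disjoint, so up to $k$ paths may pass through $x$. You must reroute them one at a time, counting as blocked the at most six vertices chosen in each earlier rerouting. An arc with an unblocked endpoint is then automatically unused.

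This bookkeeping forces $q\approx 10k$. Up to $4k+6(k-1)<10k$ vertices of $B$ can be blocked, and $d-10k=10k^2+1>kq$ guarantees at least $k+1$ distinct intermediates against the at most $k$ used arcs leaving $u$. Your choice $q=2k$ already fails in the case where $u$ has more than $q$ out-neighbors in $B$, since the solution alone may use $4k$ vertices of $B$.
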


\begin{figure}[ht]
  \centering
  \begin{tikzpicture}
  \node[blackvertex, label={[xshift=-.1cm]180:{$v$}}] (v) at (-4, 0) {};
  \node[blackvertex, label={[xshift=.1cm]0:{$w$}}] (w) at (4, 0) {};
  \node[blackvertex, label={[yshift=.1cm]90:{$u$}}] (u) at (-2.6, 1.7) {};
  \node[blackvertex, label={[yshift=.1cm]90:{$u'$}}] (up) at (2.6, 1.7) {};
  \node[blackvertex, label={[xshift=.1cm, yshift=.1cm]45:{$x$}}] (x) at (0, 0) {};
  \node[blackvertex, label={[xshift=-.1cm]180:{$b_2$}}] (b2) at (0, 1.2) {};
  \node[blackvertex, label={[xshift=.1cm]0:{$b_1$}}] (b1) at (0, 2.4) {};
  \node[blackvertex, label={[yshift=.1cm]90:{$c$}}] (c) at (-1.5, 3.9) {};
  \node[blackvertex] (c2) at (-2.3, 3.9) {};
  \node[blackvertex, label={[yshift=.1cm]90:{$a$}}] (a) at (1.5, 3.9) {};
  \node[blackvertex] (a2) at (2.3, 3.9) {};
  \draw[arrow, dashed] (v) -- (x);
  \draw[arrow, dashed] (x) -- (w);
  \draw[arrow] (v) -- (u);
  \draw[arrow] (u) -- (b1);
  \draw[arrow] (b1) -- (c);
  \draw[arrow] (c) -- (a);
  \draw[arrow] (a) -- (b2);
  \draw[arrow] (b2) -- (up);
  \draw[arrow] (up) -- (w);
  \draw[arrow, opacity=.5] (c2) to[bend left=20] (a2);
  \node[rectangle, draw, inner sep=5pt, fit=(b1)(b2)(x), label=-90:{$B$}] {};
  \node[rectangle, draw, inner sep=5pt, fit=(c)(c2), label=180:{$C$}] {};
  \node[rectangle, draw, inner sep=5pt, fit=(a)(a2), label=0:{$A$}] {};
\end{tikzpicture}%
%
  \caption{Rerouting a path through~$x$ in the proof of \zcref{lem:adp-irrelevant-vertex}. The subpath~$(v, x, w)$, drawn dashed, is replaced by the walk~$(v, u, b_1, c, a, b_2, u', w)$, where $(c, a)$ is an arc of the matching. The gray arc is another arc of the matching, and the arcs from~$A$ to~$B$ and from~$B$ to~$C$ other than $(a, b_2)$ and $(b_1, c)$ are omitted.}
  \label{fig:adp-irrelevant-vertex}
\end{figure}

\begin{proof}
  Choosing $N = \fref{lem:adp-irrelevant-vertex}(k, h)$, $q = 10k$, and $d = 10k^2 + 10k + 1$, we have \mbox{$N = 2q(2d + h - 1) + 1$}.
  We fix a set of~$N$ arcs from~$C$ to~$A$ witnessing that~$C$ is matched to~$A$, which we call the \emph{matching}.

  First, we choose~$x$.
  Let~$W$ and~$Y$ be given by \zcref{lem:rich-vertices} applied to~$D$,~$B$,~$q$, and~$d$.
  Let~$Y'$ be obtained in the same way from the digraph obtained from~$D$ by reversing all arcs, which is \hsemi{h}, so that $|Y'| \leq q(2d + h - 1)$.
  Thus $|Y| + |Y'| < N = |B|$, and we choose $x \in B \setminus (Y \cup Y')$.

  Every solution of $(D \setminus \{x\}, k, \mathcal{R})$ is a solution of $(D, k, \mathcal{R})$.
  Conversely, let~$\mathcal{Q}$ be a solution of $(D, k, \mathcal{R})$ with the minimum total number of arcs, and let~$U$ be the set of vertices used by~$\mathcal{Q}$.

  Since~$\mathcal{Q}$ has~$k$ paths and $N > 4k$, \zcref{prop:solution-meets-triple} applied to the matching gives $|U \cap B| \leq 4k$ and $|U \cap A|, |U \cap C| \leq k(2k + 4)$.
  Thus at most $2k(2k + 4)$ arcs of the matching have an endpoint in~$U$, as every vertex of $A \cup C$ is an endpoint of exactly one of them.

  If no path of~$\mathcal{Q}$ uses~$x$, we are done.
  Otherwise $x \in U$, and we reroute the paths through~$x$ one at a time, at most~$k$ times.
  We call a vertex \emph{taken} if it belongs to~$U$ or was chosen in a previous rerouting, so that every arc with an endpoint that is not taken is unused by the current solution.
  Each rerouting chooses at most six vertices.
  Thus at most $4k + 6(k - 1) < 10k$ vertices of~$B$ are taken during a rerouting, and at most $2k(2k + 4) + 6(k - 1) < N$ arcs of the matching have a taken endpoint.

  Let~$P$ be a path of the current solution that uses~$x$, and let~$v$ and~$w$ be the vertices before and after~$x$ on~$P$, which exist since~$x$ is not a terminal.
  We replace the subpath~$(v, x, w)$ of~$P$ by a walk from~$v$ to~$w$ that avoids~$x$ and whose arcs are unused.
  The walk consists of a walk~$R_1$ from~$v$ to a vertex~$b_1 \in B$, the path~$(b_1, c, a, b_2)$, and a walk~$R_2$ from~$b_2$ to~$w$.
  Here $(c, a)$ is an arc of the matching, and~$b_1$,~$c$,~$a$, and~$b_2$ are not taken.
  See \zcref{fig:adp-irrelevant-vertex}.
  The arc~$(c, a)$ exists by the bound on the arcs of the matching with a taken endpoint.
  The arcs~$(b_1, c)$ and $(a, b_2)$ exist since~$B$ is complete to~$C$ and~$A$ is complete to~$B$.

  We find~$R_1$ as follows.
  If~$v$ has more than~$q$ out-neighbors in~$B$, then one of them, say~$b_1$, is not taken, and we let $R_1 = (v, b_1)$.
  Otherwise $v \in W$, and~$v$ is not poor, as~$v$ is an in-neighbor of $x \notin Y$.
  Thus more than $d - 10k = 10k^2 + 1$ vertices~$b \in \Lambda(v)$ are not taken.
  For each of them, fix $u_b \in W$ with $(u_b, b) \in E(D)$ such that $u_b = v$ or $(v, u_b) \in E(D)$.
  If $u_b = v$ for one of them, we let $b_1 = b$ and $R_1 = (v, b_1)$.
  Otherwise, since every vertex of~$W$ has at most $q = 10k$ out-neighbors in~$B$, there are at least $k + 1$ distinct vertices~$u_b$, and at most~$k$ used arcs leave~$v$.
  Thus $(v, u_b)$ is unused for some~$b$, and we let $b_1 = b$, $u = u_b$, and $R_1 = (v, u, b_1)$, where $u \neq x$ since $(v, x)$ is used.

  Symmetrically, as $x \notin Y'$, there are a vertex~$b_2 \in B$ that is not taken and a walk~$R_2$ from~$b_2$ to~$w$.
  The walk~$R_2$ is $(b_2, w)$ or $(b_2, u', w)$, where $u' \neq x$ and $(u', w)$ is unused.

  The resulting walk avoids~$x$, which is taken, and its arcs are unused.
  Replacing $(v, x, w)$ on~$P$ by it and taking the path contained in the result gives a solution in which~$P$ no longer uses~$x$ and the other paths are unchanged.
  After at most~$k$ reroutings, no path uses~$x$, and $(D \setminus \{x\}, k, \mathcal{R})$ is a yes-instance.

  The run time follows from \zcref{lem:rich-vertices}, which computes the sets~$Y$ and~$Y'$ in $\Ocal(q \cdot n^2) = \Ocal(k \cdot n^2)$ time, and then~$x$ is found in $\Ocal(n)$ time.
\end{proof}

\subsection{The algorithm}\label{sec:adp-fpt-algorithm}

We now prove the main result of this section.
The algorithm follows the one for semicomplete digraphs~\cite[proof of Theorem 9.9]{FominP19}, with the anti-triple as a new third case of each round, as shown in \zcref{fig:adp-rounds}.

\getkeytheorem{adpfpt}

\begin{proof}
  Let $(D, k, \mathcal{R})$ be the given instance, $T = V(\mathcal{R})$, and $\ell = \fref{lem:adp-irrelevant-vertex}(k, h)$, so that $|T| \leq 2k$ and $\ell \geq 2k$.
  We maintain a digraph~$D'$ obtained from~$D$ by deleting vertices outside~$T$ and adding arcs between non-neighbors, such that $(D', k, \mathcal{R})$ is equivalent to $(D, k, \mathcal{R})$.
  Both operations preserve the property of being \hsemi{h}, so~$D'$ and $D' \setminus T$ are \hsemi{h}.
  We look for triples in $D' \setminus T$, so that they contain no terminal.
  Initially $D' = D$, and we repeat the following round.

  We first find a path decomposition~$\mathcal{X}$ of $D' \setminus T$ of width at most $\fref{lem:covering-obstructions}(\ell, h)$ that covers the non-arcs of $D' \setminus T$, an $\ell$-triple of $D' \setminus T$, or an $\ell$-anti-triple of $D' \setminus T$.
  If $D' \setminus T$ has no vertex, then $\mathcal{X} = (\emptyset)$ is such a path decomposition, and otherwise we run the algorithm of \zcref{lem:covering-obstructions} with~$\ell$ on $D' \setminus T$.
  In the first case, let~$\mathcal{X}'$ be obtained from~$\mathcal{X}$ by adding~$T$ to every bag.
  Since the vertices of~$T$ belong to every bag,~$\mathcal{X}'$ is a path decomposition of~$D'$ of width at most $\fref{lem:covering-obstructions}(\ell, h) + 2k$ that covers the non-arcs of~$D'$.
  We run the algorithm of \zcref{lem:adp-dp} on $(D', k, \mathcal{R})$ and~$\mathcal{X}'$, and we return its answer.
  In the second case, the $\ell$-triple of $D' \setminus T$ is an $\ell$-triple of~$D'$ that contains no terminal.
  Thus \zcref{lem:adp-irrelevant-vertex} gives an irrelevant vertex~$x$ for $(D', k, \mathcal{R})$, and we replace~$D'$ by $D' \setminus \{x\}$.
  In the third case, let $(A, B, C)$ be the $\ell$-anti-triple and $a \in A$ and $c \in C$ be non-neighbors.
  Then $B \subseteq \detours{D' \setminus T}{a}{c} \subseteq \detours{D'}{a}{c}$, as noted after \zcref{def:anti-triple}, so $|\detours{D'}{a}{c}| \geq \ell \geq 2k$, and $(D' + (a, c), k, \mathcal{R})$ is equivalent to $(D', k, \mathcal{R})$ by \zcref{prop:avoidable-arcs}.
  We replace~$D'$ by $D' + (a, c)$.

  Each round but the last deletes a vertex or makes two non-neighbors adjacent, and no round creates non-neighbors.
  As~$D$ has at most $hn/2$ pairs of non-neighbors by \zcref{obs:out-degree-h-semicomplete}, there are at most $(h + 2) \cdot n/2 + 1$ rounds.

  For the run time, let $J = \fref{lem:jungle-extraction}(\ell, h, \ell)$.
  Each round takes $\Ocal(J^3 \cdot n^2)$ time for \zcref{lem:covering-obstructions} and $\Ocal(k \cdot n^2)$ time for \zcref{lem:adp-irrelevant-vertex}.
  As $\ell = \Ocal(k^3 + kh)$ and $h + 1 \leq 2^{kh}$, we have $J = (h+1) \cdot 2^{\Ocal(\ell)} = 2^{\Ocal(k^3 + kh)}$.
  Thus the $\Ocal((h+1) \cdot n)$ rounds take $2^{\Ocal(k^3 + kh)} \cdot n^3$ time in total.
  The algorithm of \zcref{lem:adp-dp} runs once, on a path decomposition of width~$w \leq 140J + 2k = 2^{\Ocal(k^3 + kh)}$ with $2^{\Ocal(k^3 + kh)} \cdot n^2$ bags, since \zcref{lem:covering-obstructions} outputs it within this time.
  This takes $2^{s} \cdot n + 2^{\Ocal(k^3 + kh)} \cdot n^2$ time for some $s = 2^{\Ocal(k^3 + kh)}$, so the total run time is $2^{s} \cdot n^3$, and the result follows.
\end{proof}

\section{Cliquewidth of \titlemath{1}-semicomplete digraphs}\label{sec:cliquewidth-and-pathwidth}

In this section, we show that, unlike in semicomplete digraphs, directed pathwidth does not bound directed cliquewidth in \hsemi{1} digraphs, even acyclic ones (\zcref{thm:cliquewidth-and-pathwidth}).
\textcite[Lemma 2.14]{FominP19} show that the directed cliquewidth of a semicomplete digraph is at most its directed pathwidth plus two.
The following definition of directed cliquewidth goes back to \textcite{CourcelleO00}.

\begin{definition}[Directed cliquewidth]\label{def:directed-cliquewidth}
    A \emph{colored digraph} is a pair~$(D, \sigma)$ where~$D$ is a digraph and $\sigma \colon V(D) \to [p]$ for some positive integer~$p$.
    We call $\sigma(v)$ the \emph{color} of~$v$.
    A \emph{clique expression} is built from the constants~$\iota_a$, where~$\iota_a$ is the colored digraph with one vertex of color~$a$, using the following three operations on colored digraphs.
    \begin{outline}
        \1 The \emph{disjoint union} of $(D_1, \sigma_1)$ and $(D_2, \sigma_2)$ with $V(D_1) \cap V(D_2) = \emptyset$ is $(D_1 \cup D_2, \sigma_1 \cup \sigma_2)$.
        \1 For distinct colors~$i$ and~$j$, the \emph{directed join}~$\join_{i,j}$ of $(D, \sigma)$ adds to~$D$ every arc~$(u, v)$ with $\sigma(u) = i$ and $\sigma(v) = j$, and keeps~$\sigma$ unchanged.
        \1 For distinct colors~$i$ and~$j$, the \emph{recoloring}~$\mu_{i,j}$ of $(D, \sigma)$ gives color~$j$ to every vertex of color~$i$, and keeps~$D$ unchanged.
    \end{outline}
    A clique expression \emph{constructs} a digraph~$D$ if it evaluates to $(D, \sigma)$ for some coloring~$\sigma$.
    Its \emph{width} is the number of colors it uses, and the \emph{directed cliquewidth}~$\dcw(D)$ of~$D$ is the minimum width of a clique expression that constructs~$D$.
\end{definition}

We introduce the following digraph, which is acyclic and \hsemi{1} but has large directed cliquewidth, see \zcref{fig:cw-mirrored-grid} for~$S_4$.

\begin{definition}[Mirrored grid]\label{def:mirrored-grid}
    For a positive integer~$m$, the \emph{mirrored grid}~$S_m$ has the vertices~$v_{i,t}$ with $i, t \in [m]$.
    We call $C_i = \{v_{i,t} \mid t \in [m]\}$ the $i$-th \emph{column} of~$S_m$, and~$v_{i,t}$ is on the $t$-th \emph{row}.
    We order the vertices column by column and bottom-up, that is, $v_{i,t} \prec v_{j,s}$ if $i < j$, or if $i = j$ and $t < s$.
    The arcs of~$S_m$ are the pairs~$(v, w)$ with $v \prec w$, except that~$v_{i,t}$ and~$v_{t,i}$ are non-neighbors for all $i \neq t$.
\end{definition}

All arcs go forward in~$\prec$, so~$S_m$ is acyclic and $\dpw(S_m) = 0$.
Every vertex~$v_{i,t}$ has at most one non-neighbor, namely~$v_{t,i}$ when $i \neq t$, so~$S_m$ is \hsemi{1}, and the non-neighbors of the vertices of a column lie in distinct columns.

\begin{figure}[!htb]
    \centering
    \begin{tikzpicture}[scale=1.7]
  \draw[faded, dotted, thick] (.7, .7) -- (4.25, 4.25);
  \foreach \i in {1,...,4} {
      \node at (\i, .35) {$C_\i$};
      \foreach \t in {1,...,4} {
          \node[blackvertex] (v\i\t) at (\i, \t) {};
        }
      \foreach \t [evaluate=\t as \s using int(\t+1)] in {1,2,3} {
          \draw[arrow, opacity=.5] (v\i\t) -- (v\i\s);
        }
    }
  \foreach \i/\t/\b in {1/2/15, 1/3/15, 1/4/13, 2/3/-15, 2/4/15, 3/4/15} {
      \draw[dashed] (v\i\t) to[bend left=\b] (v\t\i);
    }
  \node[left=2pt] at (v11) {$v_{1,1}$};
  \node[left=2pt] at (v14) {$v_{1,4}$};
  \node[right=2pt] at (v41) {$v_{4,1}$};
  \node[above=2pt] at (v44) {$v_{4,4}$};
\end{tikzpicture}%
%
    \caption{The mirrored grid~$S_4$. Dashed curves join the pairs of non-neighbors~$v_{i,t}$ and~$v_{t,i}$, which are mirror images with respect to the dotted diagonal. Arcs omitted from the drawing go bottom-up inside each column and from left to right between columns.}
    \label{fig:cw-mirrored-grid}
\end{figure}

We are ready to prove \zcref{thm:cliquewidth-and-pathwidth}.

\getkeytheorem{cliquewidthpathwidth}

\begin{proof}
    Let $m = 6k$.
    Since~$S_m$ is acyclic and \hsemi{1}, it remains to prove that $\dcw(S_m) \geq k$.
    Let~$\varphi$ be a clique expression of width~$p$ that constructs~$S_m$.
    For a subexpression~$\psi$ of~$\varphi$, let $(D_\psi, \sigma_\psi)$ be the value of~$\psi$.
    A \emph{block} of $X \subseteq V(S_m)$ is a maximal interval of~$\prec$ contained in~$X$.
    The idea of the proof is the following.
    Two vertices of~$D_\psi$ with the same color have the same arcs to and from every vertex outside $V(D_\psi)$, since these arcs are created later in~$\varphi$.
    In~$S_m$, a vertex~$z \notin V(D_\psi)$ receives arcs from the vertices before it in~$\prec$ and sends arcs to those after it, with at most one exception.
    A vertex of~$D_\psi$ whose non-neighbor is outside $V(D_\psi)$ is the only vertex of~$D_\psi$ not adjacent to it.
    So~$p$ is large if $V(D_\psi)$ meets many columns, or contains much of one column but meets few columns (\zcref{clm:cw-colors}).
    One part of a minimal subexpression whose vertex set contains a column is of one of these kinds.

    \begin{claim}\label{clm:cw-colors}
        Let~$\psi$ be a subexpression of~$\varphi$ and $X = V(D_\psi)$.
        Then~$p$ is at least the number of blocks of~$X$ and at least the number of vertices of~$X$ with a non-neighbor outside~$X$.
    \end{claim}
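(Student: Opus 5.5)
The key fact is that two vertices of $X$ with the same color under $\sigma_\psi$ have the same in- and out-neighbors outside $X$ in $S_m$. To see this, note that every arc of $S_m$ between a vertex of $X$ and a vertex outside $X$ is created by a directed join applied after $\psi$, in some subexpression containing $\psi$. All later operations act on colors: recolorings merge color classes and never split them, and joins add arcs based only on current colors. Hence two vertices $x,y\in X$ with $\sigma_\psi(x)=\sigma_\psi(y)$ keep equal colors from then on. Any arc $(x,z)$ or $(z,x)$ with $z\notin X$ is created together with the corresponding arc for $y$. Since $S_m$ has no parallel arcs and is exactly the value of $\varphi$, this gives $N^\pm(x)\setminus X = N^\pm(y)\setminus X$ in $S_m$. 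So it suffices to exhibit, in each case, a set of vertices of $X$ that pairwise differ on their neighborhoods outside $X$. Such a set forces at least that many colors, so $p$ is at least its size.

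For the blocks, I pick one vertex $x_j$ from each block $I_j$ of $X$, with the blocks ordered along $\prec$. Between consecutive blocks $I_j$ and $I_{j+1}$ there is a nonempty gap of vertices outside $X$. Choose two vertices $x_j\prec x_{j'}$ from different blocks, and let $z\notin X$ lie strictly between them in $\prec$. In $S_m$, the vertex $z$ is an out-neighbor of $x_j$ and an in-neighbor of $x_{j'}$, unless it is a non-neighbor of one of them. Since $S_m$ is acyclic, $x_{j'}$ has no arc to $z$. So if $(x_j,z)$ is an arc, $z$ distinguishes the pair. The difficulty is that $z$ may be the unique non-neighbor of $x_j$. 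I handle this by choosing among the gap vertices. If the gap has at least two vertices, at most one of them is the non-neighbor of $x_j$, and I use another. If the gap is a single vertex $z$, I use $z$ when it is not the non-neighbor of $x_j$; otherwise $z$ is adjacent to $x_{j'}$, because $z$ has only one non-neighbor, and $(z,x_{j'})$ is an arc while $(z,x_j)$ is not (the arc goes the other way, or they are non-adjacent). In every case $z$ distinguishes $x_j$ from $x_{j'}$. Thus all chosen representatives need distinct colors, and $p$ is at least the number of blocks.

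For the second bound, let $x_1,\dots,x_r\in X$ be the vertices whose non-neighbor $x_i'$ lies outside $X$. Because the non-neighbor relation is a matching, the $x_i'$ are pairwise distinct. Fix $i\neq j$. Then $x_i'$ is not adjacent to $x_i$, but it is adjacent to $x_j$, since the only non-neighbor of $x_j$ is $x_j'\neq x_i'$. So $x_i'$ is a vertex outside $X$ that is a neighbor of $x_j$ and not of $x_i$, and therefore $x_i$ and $x_j$ need distinct colors. This gives $p\ge r$.

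I expect the only delicate step to be the block argument, where the exceptional non-neighbor can destroy the separating vertex; the case analysis on the gap size above handles it. The rest is the standard observation that colors only merge as the expression proceeds.
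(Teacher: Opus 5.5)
Your proposal is correct and follows the paper's proof. Same-colored vertices have identical neighborhoods outside $X$; a vertex $z\notin X$ lying between two blocks separates their representatives; and each vertex whose non-neighbor lies outside $X$ is separated from every other vertex of $X$ by that non-neighbor. The case split on the gap size is unnecessary: as the paper observes, $z$ has at most one non-neighbor, so it is adjacent to $x_j$ (giving the arc $(x_j,z)$) or to $x_{j'}$ (giving the arc $(z,x_{j'})$), and either one already distinguishes the two.
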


    \begin{cproof}
        First, we show that two vertices~$x, y \in X$ with $\sigma_\psi(x) = \sigma_\psi(y)$ have the same in-neighbors and out-neighbors outside~$X$.
        Let $z \in V(S_m) \setminus X$.
        Every arc between~$z$ and~$X$ is created by a directed join applied to a subexpression of~$\varphi$ that has~$\psi$ as a subexpression.
        In each such subexpression,~$x$ and~$y$ have the same color, since recolorings act on whole color classes and the other operations do not change colors.
        Thus a directed join creates $(x, z)$ if and only if it creates $(y, z)$, and similarly for $(z, x)$ and $(z, y)$.

        Now, let $x, y \in X$ be in distinct blocks of~$X$ with $x \prec y$.
        By the maximality of blocks, there is $z \notin X$ with $x \prec z \prec y$.
        Since~$z$ has at most one non-neighbor, $(x, z)$ or $(z, y)$ is an arc.
        Neither $(y, z)$ nor $(z, x)$ is an arc, since all arcs go forward in~$\prec$.
        Thus~$x$ and~$y$ have distinct colors, and~$p$ is at least the number of blocks of~$X$.

        Finally, let $x \in X$ have a non-neighbor~$z \notin X$.
        Since~$x$ is the only non-neighbor of~$z$, every other vertex of~$X$ is adjacent to~$z$, so~$x$ is the only vertex of its color in~$D_\psi$, and the claim follows.
    \end{cproof}

    Let~$\psi$ be a minimal subexpression of~$\varphi$ such that $V(D_\psi)$ contains a column, say~$C_c$.
    By minimality,~$\psi$ is the disjoint union of two subexpressions whose vertex sets contain no column, as the other operations do not change the vertex set.
    For one of them, say~$\psi'$, the set~$X = V(D_{\psi'})$ contains at least $m/2 = 3k$ vertices of~$C_c$.
    Every block of~$X$ meets at most two columns, since a block that meets~$C_i$ and~$C_j$ with $j > i + 1$ contains~$C_{i+1}$, and~$X$ contains no column.
    If at least $2k$ columns meet~$X$, then~$X$ has at least~$k$ blocks, and $p \geq k$ by \zcref{clm:cw-colors}.
    Otherwise, at most $2k - 1$ columns meet~$X$, so among the at least $3k$ rows~$t$ with $v_{c,t} \in X$ there are at least $k + 1$ such that~$C_t$ does not meet~$X$.
    For each such~$t$, we have $t \neq c$ since~$C_c$ meets~$X$, so the non-neighbor~$v_{t,c}$ of~$v_{c,t}$ is not in~$X$, and $p \geq k + 1$ by \zcref{clm:cw-colors}.
    In both cases $p \geq k$, and the result follows.
\end{proof}

The following proposition shows that path decompositions covering the non-arcs bound the directed cliquewidth as in semicomplete digraphs.
The proof for semicomplete digraphs~\cite[Lemma 2.14]{FominP19} first makes the path decomposition nice, which preserves the covering of the non-arcs as noted after \zcref{def:covering-non-arcs}.
It then uses semicompleteness only when a bag introduces a vertex~$v$, where it needs that~$v$ is complete to the forgotten vertices; that no forgotten vertex has an arc to~$v$ holds in every digraph by \zcref{prop:separations-from-nice-path-decomposition}.
By \zcref{obs:covering-non-arcs}, every nice path decomposition that covers the non-arcs has this property, so this proof applies without change and gives the following.

\begin{proposition}[Adapted from {\textcite[Lemma 2.14]{FominP19}}]\label{prop:cw-covering-non-arcs}
    Let~$D$ be a digraph and~$\mathcal{X}$ be a path decomposition of~$D$ of width~$w$ that covers the non-arcs of~$D$.
    Then $\dcw(D) \leq w + 2$.
\end{proposition}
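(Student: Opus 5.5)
The plan is to build a clique expression directly from a nice version of~$\mathcal{X}$, reusing the construction for semicomplete digraphs~\cite[Lemma 2.14]{FominP19}. First, make~$\mathcal{X}$ nice; by the remark after \zcref{def:covering-non-arcs}, this keeps the width~$w$ and still covers the non-arcs. We then process the bags $X_1, \ldots, X_r$ in order and maintain, after step~$i$, a colored digraph equal to $D[A_i]$, where $A_i = X_1 \cup \cdots \cup X_i$. The coloring uses at most $w + 2$ colors: each vertex of the current bag~$X_i$ has its own color, one extra color~$\bot$ is shared by all forgotten vertices $A_i \setminus X_i$, and one spare color is kept free for the newly introduced vertex. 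Since $|X_i| \leq w + 1$, at most $w + 1$ colors are in use after each step, so the spare color is always available.

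The invariant is maintained bag by bag. If $X_{i+1}$ forgets~$v$, apply $\mu_{\sigma(v),\bot}$; this changes only colors and frees one color. If $X_{i+1}$ introduces~$v$, take the disjoint union with~$\iota_a$, where~$a$ is the spare color. Then, for each $u \in X_i$, apply~$\join_{a,\sigma(u)}$ when $(v,u) \in E(D)$ and~$\join_{\sigma(u),a}$ when $(u,v) \in E(D)$. Because every vertex of~$X_i$ has its own color, this adds exactly the arcs between~$v$ and~$X_i$. Finally, apply $\join_{a,\bot}$.

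The main point is to justify this last join. It must add exactly the arcs of~$D$ between~$v$ and the forgotten vertices, namely all arcs from~$v$ to them and none in the other direction. Both facts come from \zcref{obs:covering-non-arcs}. It shows that~$v$ is complete to $A_i \setminus X_i$, the set of vertices forgotten by $X_1, \ldots, X_i$. By \zcref{prop:separations-from-nice-path-decomposition}, $(A_i, B_i)$ is a separation, and $v \in B_i \setminus A_i$, so no forgotten vertex has an arc to~$v$.

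Every arc of~$D$ is created at the moment its later-introduced endpoint is introduced. At that moment the other endpoint is either in the current bag or already forgotten. It cannot be introduced only later, since that contradicts the choice of the later endpoint. It also cannot be forgotten while the first endpoint is introduced later with no shared bag, unless the arc points from the later endpoint to the forgotten one, which is exactly the case handled by the $\bot$-join. Hence the final expression constructs~$D$ and uses at most $w + 2$ colors, so $\dcw(D) \leq w + 2$.
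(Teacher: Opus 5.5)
Your proof is correct and is the paper's argument. It is the construction of \textcite[Lemma 2.14]{FominP19}, with \zcref{obs:covering-non-arcs} justifying the $\bot$-join in place of semicompleteness, and \zcref{prop:separations-from-nice-path-decomposition} excluding arcs from forgotten vertices to~$v$.

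One small fix is needed in your color count. The bag colors together with~$\bot$ can already number $w + 2$, so the claim that at most $w + 1$ colors are in use after each step is false as stated. The spare color exists because at an introducing step $|X_i| = |X_{i+1}| - 1 \leq w$, not because $|X_i| \leq w + 1$.
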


In particular, every path decomposition of~$S_{6k}$ that covers its non-arcs has width at least $k - 2$.

\printbibliography

\end{document}